\title{Stability and Inference of the Euler Characteristic Transform}
\date{}
\renewcommand{\epsilon}{\varepsilon}
\newcommand{\ECT}{\mathrm{ECT}}
\newcommand{\SECT}{\mathrm{SECT}}
\DeclareMathOperator{\rank}{\mathrm{rank}\,}
\DeclareMathOperator{\im}{\mathrm{im}\,}
\newcommand{\funcspacer}{\mathcal{F}^r(Z^*,d)}
\newcommand{\homspacer}{\mathcal{E}^r(Z^*,d)}
\newcommand{\imspacer}{\mathcal{G}^r(Z^*,d)}
\DeclareMathOperator{\RR}{\mathbb R}
\DeclareMathOperator{\NN}{\mathbb N}
\DeclareMathOperator{\ZZ}{\mathbb Z}
\newtheorem{lemma}{Lemma}
\newtheorem{theorem}[lemma]{Theorem}
\newtheorem{proposition}[lemma]{Proposition}
\theoremstyle{definition}
\newtheorem{definition}[lemma]{Definition}
\theoremstyle{remark}
\newtheorem*{remark}{Remark}
\newcommand{\footremember}[2]{%
    \footnote{#2}
    \newcounter{#1}
    \setcounter{#1}{\value{footnote}}%
}
\newcommand{\footrecall}[1]{%
    \footnotemark[\value{#1}]%
} 
\newcommand\blfootnote[1]{%
  \begingroup
  \renewcommand\thefootnote{}\footnote{#1}%
  \addtocounter{footnote}{-1}%
  \endgroup
}
\author{Lewis Marsh\footremember{maths}{Mathematical Institute, University of Oxford, Oxford, UK}\footremember{ludwig}{Ludwig Institute for Cancer Research, University of Oxford, Oxford, UK} and David Beers\footrecall{maths}}
\begin{document}

\maketitle
\blfootnote{Corresponding Email: beers@maths.ox.ac.uk}

\begin{abstract}
    The Euler characteristic transform (ECT) is a signature from topological data analysis (TDA) which summarises shapes embedded in Euclidean space. Compared with other TDA methods, the ECT is fast to compute and it is a sufficient statistic for a broad class of shapes. However, small perturbations of a shape can lead to large distortions in its ECT. In this paper, we propose a new metric on compact one-dimensional shapes and prove that the ECT is stable with respect to this metric. Crucially, our result uses curvature, rather than the size of a triangulation of an underlying shape, to control stability. We further construct a computationally tractable statistical estimator of the ECT based on the theory of Gaussian processes. We use our stability result to prove that our estimator is consistent on shapes perturbed by independent ambient noise; i.e., the estimator converges to the true ECT as the sample size increases.
\end{abstract}

\section{Introduction}

Classifying shapes is a ubiquitous task in data science and machine learning. A wealth of theory has been developed to distinguish different shapes and a large array of applications of these methods in the natural sciences exist \cite{boyer2011algorithms, donnat2022deep, gao2019gaussian, wang2021statistical}. In particular the Euler characteristic transform (ECT) \cite{turner2014persistent}, arising from topological data analysis (TDA), provides a sufficient statistic for a large class of shapes \cite{curry2018many, ghrist2018persistent} (e.g., compact semi-algebraic sets) embedded in Euclidean space by considering intersections of a shape with half-spaces. Each half-space in $\RR^d$ can be associated with an integer by computing the Euler characteristic of the part of the shape that lies within the given half-space. By this process, a map is induced from half-spaces in $\RR^d$ to $\ZZ$ which is defined to be the ECT. Typically, the ECT is viewed as a function from $S^{d-1}\times \RR$ to $\ZZ$, via an identification of half-spaces in $\RR^d$ to $S^{d-1}\times \RR$. Related to the ECT is the smooth Euler characteristic transform (SECT), which encodes the same information as the ECT but is a continuous function.

The ECT is theoretically well-motivated, interpretable and has been successfully applied in practice \cite{amezquita2020quantifying, crawford2020predicting, marsh2022detecting, nadimpalli2023euler, wang2021statistical}. 
Further, the ECT signature lies in a vector space of functions and is thus well-suited for further statistical analysis. While the ECT is fast to compute, small perturbations in the input shape can lead to large differences in the output signature \cite{chevyrev2018persistence}. By contrast to other TDA methods, such as persistent homology~\cite{cohen2005stability} and the persistent homology transform \cite{turner2014persistent}, we are not aware of any general stability results for the ECT which are independent of the triangulation of a shape.

\subsection{Contributions}
We propose a new metric on the embeddings of a finite one-dimensional CW complex that is sensitive to changes in arc-length. Next, we introduce a norm on Euler characteristic transforms, in a similar vein to the norm introduced in Meng et al. \cite[Equation 3.1]{meng2022randomness}, defined by taking first the 1-norm over the $\RR$ component, and then the $\infty$-norm over the $S^{d-1}$ component.
We then prove a novel stability result for the ECT, showing that the ECT is continuous in our metric of embedded spaces (Theorem~\ref{thm:ECTstab}). In other words, if two embeddings of the same one-dimensional CW complex are sufficiently close in our metric, their corresponding ECTs are also close. To the best of our knowledge, our result is the first stability result for the ECT which is independent of the triangulation of a shape. Using similar ideas, we also show that the ECT of a smooth underlying shape can be approximated using sufficiently fine triangulations (Theorem \ref{thm:interpolationstable}). Further, we propose a smoothing method for embeddings of one-dimensional CW complexes that were perturbed by independent Gaussian noise in ambient space. We use the two previous results to prove that our smoothing method does not only yield stability but also provides a consistent statistical estimator for the ECT of a noisy data set (Theorem \ref{thm:combined}), i.e. the ECT of the smoothed shape converges to the ECT of the underlying shape in probability as we increase the number of noisy observations.

The well-known stability results in applied topology for \v{C}ech and Vietoris-Rips filtrations of point clouds are stated in terms of the Hausdorff distance \cite{chazal2014persistence}. Proving stability results for the ECT is complicated by the fact that this metric is too coarse for the ECT to be continuous.
Crucially, it is straightforward to construct examples of two shapes embedded in Euclidean space which are close in Hausdorff distance but whose ECTs are far apart. We loosely classify such instabilities into two categories.

The first type of instability arises when two shapes are close in Hausdorff distance, yet not homeomorphic to each other. Counterexamples can be constructed by adding a single point to a shape at an arbitrarily close distance, as visualised in Figure \ref{fig:instability_type1} for the case of an embedded simplicial complex. We point out that classical persistent homology and the persistent homology transform (PHT) \cite{turner2014persistent} suffer from the same instability. However, extended persistence \cite{cohensteiner2008extending} and the extended persistent homology transform \cite{turner2022extended} can be used to partially overcome this type of instability. In this paper, we resolve the described type of instability by restricting ourselves to shapes that are homeomorphic. Restricting an ECT analysis to a homeomorphism class of shapes is common in applications, see for example \cite{amezquita2020quantifying, marsh2022detecting, nadimpalli2023euler, tang2022topological}.

\begin{figure}
    \centering
    \includegraphics[width=0.7\textwidth]{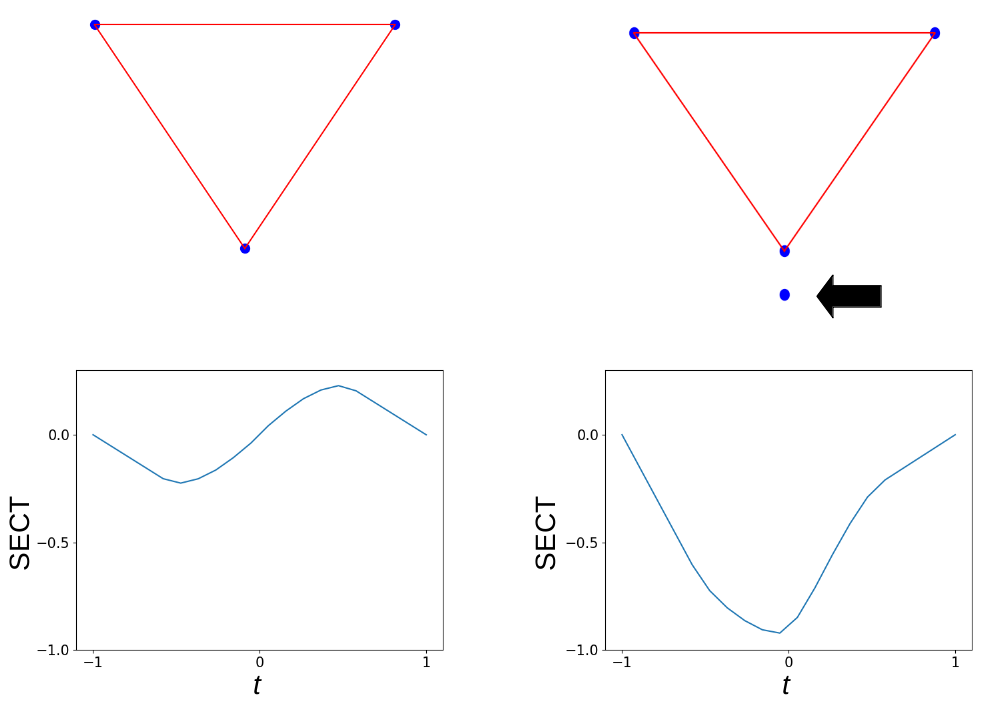}
    \caption{We visualise two embedded simplicial complexes (top) which differ by a single vertex. Their SECTs (bottom), visualised for a filtration in the bottom-to-top direction, are significantly different. The illustrated behaviour persists when we move the disconnected vertex in the top right panel (indicated by arrow) arbitrarily close to the larger connected component.}
    \label{fig:instability_type1}
\end{figure}

Secondly, the ECT can suffer from instability through excessive curvature. For example, in the case of shapes homeomorphic to $S^1$ or $I=[0,1]$, which can be parametrised as curves, this type of instability occurs when two curves are close in the embedded space, but one curve changes curvature much more rapidly. An example of such curves is given in Figure \ref{fig:instability_type2}. Such instability is expected to occur if a shape is approximated based on points which are perturbed \emph{independently} of each other by ambient noise.

\begin{figure}
    \centering
    \includegraphics[width=0.7\textwidth]{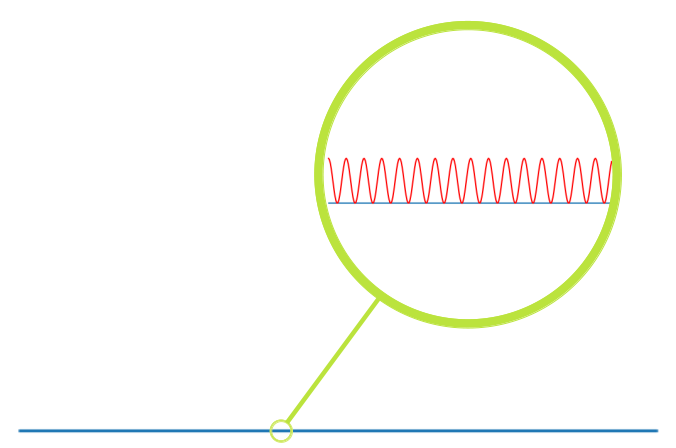}
    \caption{Two shapes homeomorphic to $[0,1]$ embedded into $\RR^2$: A straight line (blue) and a wave (red) closely following the straight line with a small amplitude $\varepsilon$ and high frequency. As long as the frequency is high enough for the wave to go through $n:=\lceil 1/\varepsilon\rceil$ amplitudes, the distance of the ECTs of the two curves is at least 1 (fix the $S^1$-component of the ECTs to be the bottom-up direction and compute the 1-norm over $\RR$), while the Hausdorff distance between the curves is $\varepsilon$.}
    \label{fig:instability_type2}
\end{figure}

Our work resolves instabilities of the second type for one-dimensional shapes by proposing a new metric which is sensitive to curvature. We also provide a statistical estimator of the ECT which is consistent under perturbations by independently distributed Gaussian noise. These perturbations are likely to produce changes in curvature. While the PHT and extended PHT do not suffer from instabilities as illustrated in Figure \ref{fig:instability_type2}, neither method provides a consistent estimator. Furthermore, the ECT arguably provides signatures more amenable to the application of further statistical and machine learning methods and are, by themselves as well as in conjunction with our new method, faster to compute. 

\subsection{Related Work}
We note that the stability of the Wasserstein distance proved by Skraba and Turner \cite{skraba2020wasserstein} provides a straightforward stability result for the ECT. Further, D{\l}otko and Gurnari \cite{dlotko2022euler} prove a similar result for the Euler characteristic curve. Nadimpalli et al. \cite{nadimpalli2023euler} prove a stability result for the ECT on binary image data, which is linear in the number of voxels at which two images differ. However, these stability results depend on the number of simplices in the underlying simplicial complex and the bound on the ECT becomes increasingly loose as the number of data points increases.  Meng et al. \cite{meng2022randomness} provide results that imply stability of the ECT when a shape is perturbed by rotations and translations but not for more general perturbations. Tameness assumptions, which are not needed in our results, are required for the stability they prove to hold. They also provide a statistical inference pipeline for shapes using the SECT. However, their pipeline considers parameterised families of shapes and random perturbations only happen in parameter space. As a result, the perturbations of points in shape space are correlated. By contrast, our results on the estimation of the ECT and SECT allow independent perturbations in ambient space.

\subsection{Outline}

The paper is structured as follows: We start by introducing background on one-dimensional CW complexes, the ECT and SECT in Section~\ref{sec:background}. Further, we introduce Gaussian processes. In Section \ref{sec:ECTstab} we propose a novel metric on the space of embeddings of a finite one-dimensional CW complex and prove that the ECT is stable against this metric for $C^2$-embeddings in Theorem~\ref{thm:ECTstab}. Then we propose a method for approximating the ECT of such an embedding by interpolating points in a finite subset in Theorem~\ref{thm:interpolationstable}. In Theorem \ref{thm:smoothing} of Section \ref{sec:ect_prob_stab} we prove the probabilistic convergence of Gaussian processes on finite one-dimensional CW complexes, given a suitable kernel. Next, we construct a statistical estimator of the ECT for a shape perturbed by independent Gaussian noise. In Theorem \ref{thm:combined} we combine our deterministic stability results with our probabilistic convergence result to prove that the estimator is consistent. Finally, we illustrate the power of our estimator and results on an example in Section~\ref{sec:example}.

\section{Background}\label{sec:background}

\subsection{Topological Preliminaries}

\subsubsection{One-Dimensional CW Complexes}
A topological space $Z$ is called a one-dimensional CW complex if it is of the form
\begin{equation}
\label{eqn:1dimCWdef}
    Z  = \bigg(Z_0 \sqcup \bigsqcup_{\lambda \in \Lambda} [0,1]\bigg)/\phi,
\end{equation}
where $Z_0$ is a set with the discrete topology and $\phi$ is some map from the endpoints of the intervals in $\bigsqcup_{
\lambda \in \Lambda} [0,1]$ to $Z_0$. We refer to the map sending the $\lambda^\textrm{th}$ copy of $[0,1]$ into $Z$ by $\Phi_\lambda$ (note that $\Phi_\lambda$ must be injective everywhere except possibly the endpoints of the interval). The space $Z$ is said to be a finite one-dimensional CW complex if it can be written as in Equation (\ref{eqn:1dimCWdef}) with $Z_0$ and $\Lambda$ finite. We refer to points in $Z$ that are in the image of $Z_0\to Z$ as 0-cells and subsets of $Z$ that are the image of a map $\Phi_\lambda$ as 1-cells. For convenience, we denote the set of 1-cells of $Z$ by $Z_1$. It may be possible for a space $Z$ to be written as in Equation (\ref{eqn:1dimCWdef}) in many different ways. For instance, the circle admits the structure of a one-dimensional CW complex with $n$ 0-cells and $n$ 1-cells for any $n$. Sometimes, we need to fix a cellular decomposition of a shape. When fixing a choice of $Z_0$ and $\{\Phi_\lambda\}_{\lambda\in\Lambda}$, we refer to $Z^* = (Z, Z_0,\{\Phi_\lambda\}_{\lambda\in\Lambda})$ as a CW structure on $Z$.

We are primarily interested in shapes with this structure that are subsets of $\mathbb{R}^d$. To this end, we say that $f:Z\to X\subseteq \mathbb{R}^d$ is a $C^r$ map under $Z^*$ if $f$ is $C^r$ on its restriction to each copy of $[0,1]$ in Equation (\ref{eqn:1dimCWdef}). We denote the set of such maps $f$ by $\funcspacer$. We denote the subset of $\funcspacer$ of maps that are also homeomorphisms by $\homspacer$ and the set of images of these homeomorphisms by $\imspacer$.

For $r \geq 2$, we say that $f \in \funcspacer$ has curvature bounded by $M$ if the curvature of the map $[0,1] \to Z \to X$ is bounded by $M$ for every copy of $[0,1]$ in Equation (\ref{eqn:1dimCWdef}). By compactness of the unit interval, it follows that every $f\in \imspacer$ has curvature bounded by some constant $M$ whenever $Z$ is a finite one-dimensional CW complex and $r \geq 2$. We say $X \in \imspacer$ has curvature bounded by $M$ under $Z^*$ if the curvature of any map $h\in\homspacer$ with image $X$ has curvature bounded by $M$. It is straightforward to show that if $X$ has curvature bounded by $M$ under $Z^*$, then every map $h\in\homspacer$ with image $X$ has curvature bounded by $M$.

\subsubsection{The Euler Characteristic Transform}

The Euler characteristic of a topological space $X$ with finitely generated homology is defined as the following alternating sum, which is homotopy invariant
\begin{equation*}
    \chi(X) : =  \sum_{k = 0}^\infty (-1)^k \rank H_i(X;\mathbb{Z}).
\end{equation*}
Here, the rank of a finitely generated abelian group is the number of $\mathbb{Z}$ summands in its canonical decomposition. If $X$ is homotopy equivalent to a finite one-dimensional CW complex containing $c_k$ cells of dimension $k$ for $k=0,1$, then the following alternate formula for $\chi(X)$ also holds
\begin{equation*}
    \chi(X) = c_0 -c_1.
\end{equation*}
For a proof of this equation, see for example \cite[Theorem 2.44]{hatcher2002algebraic}. In particular, if every path-component of $X$ is contractible, then $\chi(X)$ is the number of path-components of $X$.

For a subset $X \subseteq \mathbb{R}^d$, we define the \textit{Euler characteristic transform} (ECT) of $X$ to be the following map:
\begin{equation*}
    \begin{split}
        \ECT_{X}: S^{d-1}\times \mathbb{R} &\longrightarrow \mathbb{Z}\\
        (v,t) &\longmapsto \chi(\{x\in X : \langle x, v \rangle \leq t \}).
    \end{split}
\end{equation*}
In words, the Euler characteristic transform of a shape $X$ encodes the Euler characteristic of the intersection of $X$ with every closed half-space with affine boundary. When $\ECT_X(v,t)$ is not well defined, we set $\ECT_X(v,t) = \infty$. In this context, we set $\infty + \infty = \infty$, $\infty-\infty = \infty$, and $\infty + n = \infty$ for any integer $n$.

The Euler characteristic transform has been of interest in the context of studying geometric data since, perhaps surprisingly, the map $X \mapsto \ECT_X$ is injective when $X$ is a simplicial complex. Hence, researchers are able to convert geometric data into functional data, which can be more easily studied on a computer and by methods of classical statistics and machine learning. Injectivity is not difficult to show in dimension one. In dimensions 2 and 3 it was first shown in \cite[Theorem 3.1, Corollary 3.2]{turner2014persistent}. This result was generalised using Euler calculus to any dimension independently in \cite[Theorem 3.5]{curry2018many} and \cite[Theorem 5]{ghrist2018persistent}. In fact, both of these papers show that the ECT is injective on families of constructible sets. In particular, the ECT is injective on compact semi-algebraic sets.

Often one restricts to constructible families of subsets of $\mathbb{R}^d$ when studying the ECT, however, for the results presented in this paper these assumptions are unnecessary and further mention of constructible sets will be limited.

If the subset $X\subset\RR^d$ is bounded, as is the case for any $C^r$-embedding of a finite one-dimensional CW complex, we can restrict the ECT to being a function on $S^{d-1}\times [-a,a]$, where $a\in\RR$ is greater than the bound of $X$ (in the Euclidean norm). For a fixed direction $v\in S^{d-1}$, we then define the Euler characteristic curve (ECC) as $\mathrm{ECC}_{X,v}(t)=\ECT_X(v,t)$ for each $t\in[-a,a]$. The \emph{smooth Euler characteristic transform} (SECT), introduced in \cite{turner2014persistent}, is then defined as
\begin{align*}
    \mathrm{SECT}_X: S^{d-1}\times[-a,a] & \longrightarrow \RR\\
    (v,t) & \longmapsto \int_{-a}^t \ECT_X(v,x)-\overline{\mathrm{ECC}_{X,v}}\,\mathrm{d}x,
\end{align*}
where $\overline{\mathrm{ECC}_{X,v}}$ is the mean of $\mathrm{ECC}_{X,v}$ over $[-a,a]$. The SECT contains the same information on $X$ as the ECT. However, the SECT lies in the Hilbert space $\mathcal{L}_2(S^{d-1}\times[-a,a])$ and is, therefore, more amenable to the application of further statistical methods \cite{wang2021statistical}.

For the remainder of this paper, we endow the ECT (viewed as a function on $S^{d-1}\times [-a,a]$) with the norm
\begin{equation}
    \left\|\ECT_X\right\|:=\sup_{v\in S^{d-1}}\int_{-a}^a|\ECT_X(v,t)|\,\mathrm{d}t.\label{eq:norm}
\end{equation}
The norm is defined and considered analogously for the SECT.

It is also useful for us to define Euler characteristic transforms of functions $f$ from topological spaces into $\mathbb{R}^d$. We define
\begin{equation*}
    \begin{split}
        \ECT_{f}: S^{d-1}\times \mathbb{R} &\longrightarrow \mathbb{Z}\\
        (v,t) &\longmapsto \chi\big(f^{-1}\{x\in \mathbb{R}^d : \langle x, v \rangle \leq t \}\big).
    \end{split}
\end{equation*}
Note that if $f$ is a homeomorphism it is immediate that $\ECT_f = \ECT_{\im f}$. We prescribe norms to Euler characteristic transforms of functions as before: restricting to functions bounded by $a$, we let
\begin{equation*}
    \left\|\ECT_f\right\|:=\sup_{v\in S^{d-1}}\int_{-a}^a|\ECT_f(v,t)|\,\mathrm{d}t.
\end{equation*}

\subsection{Kernel Methods and Gaussian Processes}

Gaussian processes (GPs) are a model for random functions. Before defining GPs, we introduce the notion of a \emph{kernel}:

\begin{definition}
Let $X$ be as set. A kernel on $X$ is a symmetric function $k:X\times X\to\RR$ such that for all $x_1,...,x_n\in X$ and all $a_1,...,a_n\in\RR$ we get
\begin{equation}
    \sum_{i,j=1}^n a_ia_jk\left(x_i,x_j\right)\geq 0.\label{eq:kernel-pd}
\end{equation}
Given finite subsets $X'=\{x'_1,...,x'_m\}$ and $X^*=\{x^*_1,...,x^*_n\}$ of $X$, we denote by $K(X',X^*)$ the $m\times n$ matrix with $i,j$-entry $K(X',X^*)_{ij}=k\left(x'_i,x^*_j\right)$, which is called the \emph{Gram matrix} of $k$ at $X'$ and $X^*$.

To each kernel $k$, we can associate its \emph{reproducing kernel Hilbert space} (RKHS): define a vector space of functions on $X$ by $\mathcal{H}_0=\mathrm{span}_{\RR}\{k(\,\cdot\,,x)\,\vert\,x\in X\}$ with inner-product induced by
\begin{equation}
    \langle k(\,\cdot\,,x),k(\,\cdot\,,y)\rangle:=k(x,y).\label{eq:rep_prop}
\end{equation}
Then the RKHS of $k$ is defined to be $\mathcal{H}_k=\overline{\mathcal{H}_0}$, the completion of $\mathcal{H}_0$.
\end{definition}
Note that Equation (\ref{eq:kernel-pd}) is equivalent to each Gram matrix of the form $K(X',X')$ being positive-definite.

\begin{definition}\label{def:gp_def}
    Let $X$ be a set, $\mu:X\to\RR$ a function and $k:X\times X\to\RR$ be a kernel. The Gaussian process (GP) on $X$ with mean function $\mu$ and kernel $k$ is defined to be the random function $f:X\to\RR$ such that for each finite set $X'=\{x'_1,...,x'_n\}\subseteq X$ we get
    \begin{equation}
        \begin{bmatrix}
        f(x'_1) \\ \vdots \\f(x'_m)
        \end{bmatrix}\sim
        \mathcal{N}\left(\begin{bmatrix}
            \mu(x'_1)\\ \vdots \\ \mu(x'_n)
        \end{bmatrix}, K\left(X', X'\right)\right).
    \end{equation}
\end{definition}

The theory of GPs can be used to estimate a deterministic function $f:X\to\RR$ given noisy observations of $f$ at points $\{x_1,...,x_n\}\subseteq X$. Most commonly this is done by a \emph{Gaussian process regression} (GPR), a nonparametric Bayesian method, which models $f$ as a random function.
When performing a GPR with a given kernel $k$, one typically constructs a \emph{prior distribution} by assuming
\begin{equation}
    \begin{bmatrix}
f(x'_1) \\ \vdots \\f(x'_m)
\end{bmatrix}\sim
\mathcal{N}\left(\mathbf{0}, K\left(X', X'\right)\right) \label{eq:prior}
\end{equation}
for any finite subset $X'\subseteq X$ \cite{rasmussen2003gaussian}. Assume we make $n$ observations of the form $y_i=f(x^*_i)+\zeta_i$, where $\zeta_i\sim\mathcal{N}(0,\sigma^2)$ i.i.d, for $i=1,...,n$ and $\sigma>0$. Importantly, $\zeta_i$ does not depend on $f$ in any way. Then, by our prior assumption and by the introduction of the shorthand $\mathbf{f}(X'):=(f(x_1),...,f(x_m))^T$ and $\boldsymbol\zeta:=(\zeta_1,...,\zeta_m)^T$ we get that
$$
\begin{bmatrix}
\mathbf{f}(X^*) + \boldsymbol\zeta\\
\mathbf{f}(X')
\end{bmatrix}\sim
\mathcal{N}\left(\mathbf{0},
\begin{bmatrix}
K(X^*, X^*) + \sigma^2I_n & K(X^*, X')\\
K(X', X^*) & K(X', X')
\end{bmatrix}
\right).
$$
Thus, by conditioning the above multivariate normal distribution of $\mathbf{f}$ on the observations $\mathbf{y}:=(y_1,...,y_n)^T$, we get \cite{rasmussen2003gaussian}
\begin{align}
\mathbf{f}(X')\vert \mathbf{f}(X^*)+\boldsymbol\zeta=\mathbf{y}\sim\mathcal{N}(&K(X',X^*)(K(X^*,X^*)+\sigma^2I_n)^{-1}\mathbf{y}, \notag\\ & K(X',X')-K(X',X^*)(K(X^*,X^*)+\sigma^2I_n)^{-1}K(X^*,X')). \label{eq:posterior}
\end{align}

The above distribution, for any finite $X'\subseteq X$, is the \emph{posterior distribution} of $f$ given $X'$. In the context of Bayesian modelling, we first summarise our knowledge in the values of $f$ by the prior distribution: unless we gain further information, we assume $f$ to be mean 0 with covariance $K$ (Equation (\ref{eq:prior})). For any (noisy) observation of $f$ we make, we update our belief in the values of $f$ by conditioning our prior distribution on our observations.
The posterior density at inputs $X'$ can then be interpreted as how strongly we believe an output value to be the true output of $f$ at $X'$, given our observations and modelling assumptions.

If $m=1$ (i.e., $X'=\{t\}$ for some $t\in X$), we denote the mean of the above conditional normal distribution by $\hat{f}_n(t)$ and its variance by $v_n(t)$. We henceforth call $\hat{f}_n(t)$ the \emph{Gaussian smoothing of $f$} (on the set $X^*$ of size $n$). When needed, we explicitly denote the dependence of $\hat{f}_n(t)$ on $X^*$ and $f$ by writing $\hat{f}_n(t,X^*,f)$. From Equation (\ref{eq:posterior}), it follows that $\hat{f}_n$  always lies in $\mathcal{H}_k$, the RKHS of $k$.

Under certain assumptions, one can show that $\hat{f}_n\to f$ in mean. In our paper, we use results by Koepernik and Pfaff \cite{koepernik2021consistency}, which give strong probabilistic convergence results in the case of $X$ being a compact metric space.

We note that computing $\hat{f}_n$ requires the inversion of an $n\times n$-matirx and thus has a runtime of $\mathcal{O}(n^3)$. By using the ECT on $\hat{f}_n$ we thus lose some of the ECTs runtime advantage (compared to the PHT and extended PHT). However, both versions of the PHT require $\mathcal{O}(n_s^3)$ computations \emph{per direction} \cite{edelsbrunner2010computational}, where $n_s\geq n$ is the number of simplices in the triangulation of a shape, which still gives a combined Gaussian process and ECT pipeline an edge in terms of runtime. More importantly, it is common practice to approximate the (inverse) Gram matrix by a low-rank matrix approximation method, such as the Nystroem method \cite{williams2000using} or random Fourier features \cite{rahimi2007random}. Such methods run in $\mathcal{O}(l^3+l^2n)$, where $l\ll n$ is the approximate rank of the Gram matrix and thus are significantly faster than $\mathcal{O}(n^3)$.

\section{ECT Stability of Non-Random Data}
\label{sec:ECTstab}

\subsection{Stability for smooth curves}

As we observed in the introduction, controlling the proximity of two different one-dimensional shapes is not enough to control the difference between their ECTs. This motivates the definition of a metric between such shapes which is also concerned with perturbations to length.

\begin{definition}
\label{def:distdef}
    Let $Z$ be a finite one-dimensional CW complex with a fixed CW structure $Z^* = (Z, Z_0, \{\Phi_\lambda\}_{\lambda\in\Lambda})$. Fix $r\geq 1$. For $X,Y \in \imspacer$, we define $d_{Z^*}(X,Y)$ to be the infimum of all $\epsilon$ such that there exists $h_X,h_Y \in \homspacer$, whose images are $X$ and $Y$ respectively, satisfying:
    \begin{enumerate}
        \item The difference of arc lengths between $h_X\circ\Phi_\lambda$ and $h_Y\circ\Phi_\lambda$ is less than or equal to $\epsilon$ for each $\lambda$.
        \item Both $h_X\circ\Phi_\lambda$ and $h_Y\circ\Phi_\lambda$ are curves of constant velocity for each $\lambda$.
        \item $\|h_X - h_Y\|_\infty \leq \epsilon$.
    \end{enumerate}
\end{definition}

By using the compactness of $Z$ it is not difficult to show that $d_{Z^*}$ is a metric on $\imspacer$. For the remainder of this paper, we endow $\imspacer$ with this metric and the topology arising from it. A key goal of this paper is to show that the ECT is a continuous map on $\imspacer$ for $r \geq 2$.

\begin{theorem}
\label{thm:ECTstab}
Let $Z$ be a finite one-dimensional CW complex with a fixed CW structure $Z^* = (Z, Z_0, \{\Phi_\lambda\}_{\lambda\in\Lambda})$. The map $X \mapsto \ECT_X$ is continuous on $\imspacer$ for $r \geq 2$.

In particular, if $X$ has curvature bounded by $M$, and the image of the $\lambda^{\textrm{th}}$ 1-cell in $X$ has arc length $L_\lambda$, then whenever $d_{Z^*}(X,Y) < \epsilon$, we have 
\begin{equation*}
\left\|\ECT_X-\ECT_Y\right\| \leq |Z_0|\epsilon + \sum_{\lambda\in\Lambda} G_\lambda(\epsilon),
\end{equation*}
where
\begin{equation*}
G_\lambda(\epsilon) := 
\begin{cases}
                8\sqrt{L_\lambda n_\lambda\epsilon} + n_\lambda \epsilon &  L_\lambda/n_\lambda > 2\epsilon \\
                11n_\lambda\epsilon & L_\lambda/n_\lambda \leq 2\epsilon
            \end{cases}
\end{equation*}
and
\begin{equation*}
    n_\lambda := \max\left(\left\lceil \left(\frac{M^2L_\lambda^3}{24\epsilon}\right)^{1/3} \right\rceil,\left\lceil \frac{L_\lambda M}{\pi}\right\rceil\right).
\end{equation*}
\end{theorem}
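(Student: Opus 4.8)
The plan is to reduce the global bound to a sum of independent, cell-by-cell estimates by exploiting additivity of the Euler characteristic, and then to control each one-cell's contribution by subdividing it into pieces that are simultaneously almost straight and almost length-preserving. First I would fix homeomorphisms $h_X,h_Y\in\homspacer$ nearly realising $d_{Z^*}(X,Y)<\epsilon$, so that for every $\lambda$ the arcs $\gamma_X^\lambda:=h_X\circ\Phi_\lambda$ and $\gamma_Y^\lambda:=h_Y\circ\Phi_\lambda$ have constant speed, arc length differing by $\le\epsilon$, and $\|\gamma_X^\lambda-\gamma_Y^\lambda\|_\infty\le\epsilon$. Fixing a direction $v$, I write the sublevel set $\{x\in X:\langle x,v\rangle\le t\}$ as the union of its zero-cells and one-cells and use $\chi=c_0-c_1$ with inclusion--exclusion to split $\ECT_X(v,t)$ into a contribution from each zero-cell and each one-cell. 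A zero-cell $p$ contributes the indicator $\mathbf 1[\langle h_X(p),v\rangle\le t]$, and since $|\langle h_X(p)-h_Y(p),v\rangle|\le\|h_X-h_Y\|_\infty\le\epsilon$, its contribution to $\int_{-a}^{a}|\ECT_X(v,t)-\ECT_Y(v,t)|\,\mathrm dt$ is at most $\epsilon$, uniformly in $v$; summing over $Z_0$ yields the term $|Z_0|\epsilon$. It then remains to bound, uniformly in $v$, the contribution of each one-cell, which will produce $G_\lambda(\epsilon)$.

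For a single arc, the restricted height function $g_X(s)=\langle\gamma_X^\lambda(s),v\rangle$ satisfies $\|g_X-g_Y\|_\infty\le\epsilon$, and the one-cell's Euler characteristic curve is the number of connected components of $\{s:g_X(s)\le t\}$, i.e. the count of $0$-dimensional sublevel features (local minima not yet merged). My bound on $\int_{-a}^a |\,\#\mathrm{comp}_X-\#\mathrm{comp}_Y|\,\mathrm dt$ will follow by matching the local minima and maxima of $g_X$ with those of $g_Y$: because the height functions are $\epsilon$-close, each matched critical value can be taken to move by at most $\epsilon$, so a matched feature changes the integrand's integral by at most $2\epsilon$, in the spirit of the Wasserstein stability referenced in the introduction. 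Thus the whole estimate hinges on (i) bounding the number of such features uniformly in $v$, and (ii) handling features of small persistence created or destroyed under the perturbation.

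Both tasks are addressed by subdividing $[0,1]$ into $n_\lambda$ equal sub-intervals and choosing $n_\lambda$ so that each sub-arc is well behaved. The choice $n_\lambda\ge\lceil L_\lambda M/\pi\rceil$ guarantees each sub-arc has total turning $\int\kappa\,\mathrm ds\le M L_\lambda/n_\lambda\le\pi$; by the classical fact that a spherical curve of length $<\pi$ lies in an open hemisphere, the unit tangent stays within $\pi/2$ of the sub-arc's midpoint tangent, so each sub-arc is graphical over that direction and contributes a controlled, bounded number of sublevel features per $v$, pinning the feature count at the scale $n_\lambda$ and hence the leading term $n_\lambda\epsilon$. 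The choice $n_\lambda\ge\big(M^2L_\lambda^3/24\epsilon\big)^{1/3}$ is calibrated so that on each sub-arc the excess of arc length over chord length, which for an arc of length $\ell$ and curvature $\le M$ is at most $\tfrac1{24}M^2\ell^3$ with $\ell=L_\lambda/n_\lambda$, is at most $\epsilon$; this lets me pass between $\gamma_X^\lambda,\gamma_Y^\lambda$ and their common chord polygons while keeping all induced critical-value displacements within $O(\epsilon)$.

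The main obstacle is the uniform control over all directions $v$, and in particular the directions nearly perpendicular to a sub-arc. For such flat directions $g$ varies over a range comparable only to the sub-arc's sagitta, and an $\epsilon$-perturbation can create or annihilate many short-lived components; the number that can fit is governed by the quadratic, curvature-driven growth of $g$ near its extrema, and summing their small persistences over the $n_\lambda$ pieces is exactly what produces the correction term $8\sqrt{L_\lambda n_\lambda\epsilon}$. When $L_\lambda/n_\lambda>2\epsilon$ the pieces are long enough to be resolved against the perturbation and this analysis gives $8\sqrt{L_\lambda n_\lambda\epsilon}+n_\lambda\epsilon$; in the degenerate regime $L_\lambda/n_\lambda\le 2\epsilon$ the pieces are shorter than the noise scale, so I instead bound each piece's contribution crudely by a constant multiple of $\epsilon$, yielding $11n_\lambda\epsilon$. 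Summing the one-cell bounds $G_\lambda(\epsilon)$ over $\lambda$ and adding the zero-cell term $|Z_0|\epsilon$ gives the claimed inequality, and since every term tends to $0$ as $\epsilon\to0$, continuity of $X\mapsto\ECT_X$ on $\imspacer$ follows. I expect the flat-direction feature count to be the hard part: it is where curvature, rather than triangulation size, must do the work, and where the non-trivial exponents and constants in $G_\lambda$ originate.
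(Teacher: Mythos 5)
Your skeleton matches the paper's proof: the same choice of $h_X,h_Y$ realising $d_{Z^*}(X,Y)<\epsilon$, the same cell-by-cell inclusion--exclusion (the paper's gluing Lemma~\ref{lem:glue}), the same subdivision of each 1-cell into $n_\lambda$ equal pieces with exactly the paper's $n_\lambda$, and the same accounting that converts a per-piece bound of $8\sqrt{(L_\lambda/n_\lambda)\epsilon}$ (resp.\ $10\epsilon$) plus point terms into $G_\lambda(\epsilon)$. The gap is in the per-piece estimate itself, which is the analytic heart of the theorem. You propose a critical-point matching argument: matched extrema move by at most $\epsilon$, and created or destroyed features are controlled because their number ``is governed by the quadratic, curvature-driven growth of $g$ near its extrema.'' This cannot work. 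First, the feature count of $g_X$ on a sub-arc is not bounded by the turning condition: total turning $\leq\pi$ makes the sub-arc graphical over \emph{one} direction, but for a direction $v$ nearly perpendicular to the chord a curvature-$M$ arc can have arbitrarily many critical points (oscillations of period $P\to 0$ and amplitude of order $MP^2$ respect the curvature bound). Second, and fatally, the features that must be controlled are those of $g_Y$, and the theorem allows $Y$ to have \emph{arbitrarily large} curvature --- the bound may depend only on $M$, the curvature bound of $X$. An $\|\cdot\|_\infty$-perturbation of size $\epsilon$ can create arbitrarily many components each of persistence about $\epsilon$ (this is exactly the instability of Figure~\ref{fig:instability_type2}), so ``each unmatched feature has small persistence'' is a bottleneck-type statement that yields no bound on the $L^1$-type integral you need; the number of unmatched features, not their individual size, is the whole problem.

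What rules out that scenario under the hypotheses is not curvature but the arc-length condition in $d_{Z^*}$: creating a feature of height $h$ costs at least $2h$ of extra length, and $Y$ has at most $\epsilon$ of extra length per cell. The paper makes this quantitative in two steps that your argument is missing. Proposition~\ref{prop:ectbound} bounds $\int|\ECT_\gamma(v,t)|\,\mathrm{d}t$ by the variation $V(\langle v,\gamma(\cdot)\rangle)$, i.e.\ by \emph{total persistence} rather than feature count; and Lemma~\ref{lem:varbound} bounds that variation by the Pythagorean quantity $\sqrt{L^2-L_x^2}$, where $L$ is the curve's length and $L_x$ its endpoint displacement along a reference direction. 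Applied to a perturbed piece $\beta$ --- using only its length ($\leq L_\lambda/n_\lambda+2\epsilon$, from the arc-length hypothesis) and its endpoint proximity to the nearly straight piece of $X$ --- this gives variation $O\bigl(\sqrt{(L_\lambda/n_\lambda)\epsilon}\bigr)$ in flat directions, which is where $8\sqrt{L_\lambda n_\lambda\epsilon}$ actually comes from; in chord-aligned directions the large variations of the two pieces must be cancelled against explicit correction terms, which is the case analysis of Proposition~\ref{prop:localstab} and the trigonometric lemma following it, not a feature-by-feature matching. Your step ``pass to common chord polygons while keeping all induced critical-value displacements within $O(\epsilon)$'' is again bottleneck-type and runs into the same unbounded-count obstruction. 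So the decomposition and the choice of $n_\lambda$ are right, but the core mechanism --- variation/total-persistence bounds via the length-versus-chord inequality, with the arc-length hypothesis doing the work for $Y$ --- is absent, and without it the claimed bound does not follow.
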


We prove this theorem via the following proposition, which is useful when considering functional information in Section \ref{sec:ect_prob_stab}.

\begin{proposition}
\label{prop:funcstab}
Let $Z$ be a finite one-dimensional CW complex with a fixed CW structure $Z^* = (Z, Z_0, \{\Phi_\lambda\}_{\lambda\in\Lambda})$. Let $f,g \in \funcspacer$, with $r\geq 2$, and suppose that:
\begin{enumerate}
\item The curves $f\circ\Phi_\lambda$ and $g\circ\Phi_\lambda$ have arc lengths that differ by at most $\epsilon$ for each $\lambda$.
\item The curves $f\circ\Phi_\lambda$ and $g\circ\Phi_\lambda$ have constant velocity for each $\lambda$.
\item $\|f - g\|_\infty \leq \epsilon$.
\end{enumerate}

Then if $f$ has curvature bounded by $M$, and $f\circ\Phi_\lambda$ has arc length $L_\lambda$, we have
\begin{equation*}
\big\|\ECT_f-\ECT_g\big\| \leq |Z_0|\epsilon + \sum_{\lambda\in\Lambda} G_\lambda(\epsilon),
\end{equation*}
where $G_\lambda$ and $n_\lambda$ are defined as above.
\end{proposition}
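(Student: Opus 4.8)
The plan is to fix a direction $v \in S^{d-1}$, express $\ECT_f(v,t)$ as a sum of contributions from the individual cells of $Z^*$, bound the integral $\int_{-a}^a |\ECT_f(v,t)-\ECT_g(v,t)|\,dt$ cell by cell, and take the supremum over $v$ only at the end. Writing $h^f_v = \langle f(\,\cdot\,),v\rangle$ for the induced height function on $Z$, the set $f^{-1}\{\langle\cdot,v\rangle\le t\}$ is exactly the sublevel set $(h^f_v)^{-1}(-\infty,t]$, which is closed, hence compact, in $Z$. By additivity of the compactly supported Euler characteristic over the open cells of $Z^*$, its Euler characteristic equals $\sum_{w\in Z_0}\mathbb{1}[h^f_v(w)\le t] + \sum_{\lambda}\beta^f_\lambda(v,t)$, where $\beta^f_\lambda(v,t)=\chi_c\big(\{s\in(0,1):h^f_v(\Phi_\lambda(s))\le t\}\big)$ is the contribution of the open $\lambda$-th edge. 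The triangle inequality then reduces the problem to bounding the $0$-cell and $1$-cell contributions separately.

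For the $0$-cells, since $\|f-g\|_\infty\le\epsilon$ and $\|v\|=1$, Cauchy--Schwarz gives $|h^f_v(w)-h^g_v(w)|\le\epsilon$ for each $w\in Z_0$. The two associated Heaviside steps are offset in $t$ by at most $\epsilon$, so each contributes at most $\epsilon$ to the integral, giving the total $|Z_0|\epsilon$ and accounting for the first term of the bound.

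The real work is the $1$-cell contribution $\int|\beta^f_\lambda-\beta^g_\lambda|\,dt$, and this is where curvature enters. I would subdivide the constant-velocity curve $\gamma_f=f\circ\Phi_\lambda$ into $n_\lambda$ arcs of equal arc length $L_\lambda/n_\lambda$. The choice $n_\lambda\ge\lceil L_\lambda M/\pi\rceil$ forces the tangent of $\gamma_f$ to turn by less than $\pi$ across each arc, so $\langle\gamma_f',v\rangle$ changes sign at most once and $h^f_v$ is unimodal on each arc; the choice $n_\lambda\ge\lceil(M^2L_\lambda^3/(24\epsilon))^{1/3}\rceil$ forces each arc to stay within a curvature-controlled distance $\lesssim M(L_\lambda/n_\lambda)^2$ of its chord. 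Comparing $\ECT_f$ with the piecewise-linear interpolant $\hat\gamma_f$ through the $n_\lambda+1$ sample points, each per-arc discrepancy is a single unit-height bump of $t$-width at most the sagitta $\lesssim M(L_\lambda/n_\lambda)^2$; summing over the $n_\lambda$ arcs and substituting for $n_\lambda$ yields the term $8\sqrt{L_\lambda n_\lambda\epsilon}$, since $M L_\lambda^2/n_\lambda\asymp\sqrt{L_\lambda n_\lambda\epsilon}$ for this $n_\lambda$. The comparison of the two interpolants $\hat\gamma_f$ and $\hat\gamma_g$ involves only $O(n_\lambda)$ step features whose jump locations move by at most $\epsilon$, producing the $n_\lambda\epsilon$ term.

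The main obstacle is controlling $g$ near its own interpolant, since $g$ carries no curvature bound and could a priori oscillate rapidly within an $\epsilon$-tube around $\gamma_f$, creating arbitrarily many spurious sublevel-set components. The resolution forces all three hypotheses to act together. Condition 3 gives the interleaving $f^{-1}\{\langle\cdot,v\rangle\le t-\epsilon\}\subseteq g^{-1}\{\langle\cdot,v\rangle\le t\}\subseteq f^{-1}\{\langle\cdot,v\rangle\le t+\epsilon\}$, so a spurious feature of $g$ at level $t$ lives within an $\epsilon$-band of the unimodal critical values of $h^f_v$ and contributes a bump of $t$-height at most $\approx 2\epsilon$. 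Conditions 1 and 2 cap how many such bumps occur: because $\gamma_g$ has constant velocity, every point of $\gamma_g$ lies within $\epsilon$ of $\gamma_f$, and the arc length of $\gamma_g$ exceeds that of $\gamma_f$ by at most $\epsilon$, the curve $\gamma_g$ has only a small surplus of arc length to spend on excursions beyond faithfully tracking $\gamma_f$; this surplus caps its excess total variation in the $v$-direction, and hence, via the Banach indicatrix identity $\int_{\RR}\#\{s:h^g_v(s)=t\}\,dt=\mathrm{TV}(h^g_v)$, the total $L^1$-mass of the spurious bumps. Balancing this oscillation budget against the band width $2\epsilon$ is what produces the two regimes of $G_\lambda$, according to whether the per-arc height scale $L_\lambda/n_\lambda$ exceeds the noise scale $2\epsilon$. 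I expect quantifying this excess-arc-length-to-excess-oscillation trade-off, and matching the sublevel features of $f$ and $g$ while respecting the arc-length budget, to be the most delicate part of the argument.
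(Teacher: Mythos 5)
Your plan reconstructs most of the machinery the paper actually uses: the same subdivision of each 1-cell into $n_\lambda$ equal-arc-length pieces with essentially the same reading of the two constraints defining $n_\lambda$, control of $\int|\ECT|$ by the total variation of the height function (your ``Banach indicatrix'' identity is precisely the paper's Proposition~\ref{prop:ectbound}), conversion of an arc-length surplus into a bound on transverse variation (the paper's Lemma~\ref{lem:varbound}), and cell-by-cell assembly producing the $|Z_0|\epsilon$ term (the paper uses Mayer--Vietoris gluing, Lemma~\ref{lem:glue}, rather than $\chi_c$-additivity, but this is equivalent bookkeeping). The one structural divergence is that the paper never introduces piecewise-linear interpolants in this proof: it compares the $i$-th sub-arc of $f$ \emph{directly} with the $i$-th sub-arc of $g$ via Proposition~\ref{prop:localstab}, whose hypotheses (one curve nearly straight, by the Schwarz-theorem chord bound of Proposition~\ref{prop:curvbound}; the other merely having close endpoints and arc length at most chord plus $2\epsilon$) are exactly what conditions 1--3 deliver after subdivision. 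Interpolants appear only in the separate Theorem~\ref{thm:interpolationstable}.

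There are, however, two genuine gaps. First, your claim that $n_\lambda \geq \lceil L_\lambda M/\pi\rceil$ forces $\langle\gamma_f',v\rangle$ to change sign at most once, so that each per-arc discrepancy is ``a single unit-height bump,'' is false: bounded total turning does not prevent the tangent from crossing $v^{\perp}$ many times. Concretely, $\gamma(s)=(s,\,(M/\omega^{2})\sin(\omega s))$ has curvature at most $M$ for every $\omega$, yet for $v=(0,1)$ its height function has on the order of $\omega\ell/\pi$ local extrema on an arc of length $\ell$, so the sublevel sets have many components and $\ECT_\gamma(v,t)$ is unboundedly large on a thin band of $t$ --- not a unit-height bump. (In the paper the condition $L_\lambda/n_\lambda < \pi/M$ is not about unimodality at all; it is the hypothesis required to invoke Schwarz's theorem in Proposition~\ref{prop:curvbound}.) The per-arc $L^{1}$ estimate survives only because variation controls the combined mass of all these components at once, which is why the paper routes \emph{both} curves through Proposition~\ref{prop:ectbound} and Lemma~\ref{lem:varbound}; your sketch needs that replacement on the $f$-side just as much as on the $g$-side, and once it is made the interpolant middle-man becomes unnecessary.

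Second, the step you explicitly defer --- bounding $g$ against a straight reference using the arc-length budget --- is not a technicality to be expected to work out: it is where all the quantitative content of the proposition lives. The paper does it in Proposition~\ref{prop:localstab} via a case analysis on the four extremal heights $a,b,c,d$ followed by an explicit optimisation over the angle $\theta$ between the chord and $v^{\perp}$ (the ``tedious lemma''), and that optimisation is exactly where the constants $8$ and $10$ (hence $11$) in $G_\lambda$ originate. Moreover, the proposition asserts the bound with these specific constants, and your three-leg chain $f \to \hat\gamma_f \to \hat\gamma_g \to g$ incurs losses the direct comparison avoids: two square-root legs instead of one, plus an extra $O(n_\lambda\epsilon)$ of vertex matching between the two interpolants on top of the gluing terms. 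So even after your oscillation-budget argument is made rigorous, recovering $G_\lambda$ as stated is not automatic and would require redoing an optimisation of the same kind the paper performs.
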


The idea of the proof of this proposition is as follows. We show that the norm of the ECT of a curve can be controlled using its differential properties. Using this observation, we can bound the difference in ECT of two curves that are nearly straight lines. We can also refine the structure of a finite one-dimensional CW complex $Z^*$ with a map $f:Z\to \mathbb{R}^d$ into enough pieces that the image of every 1-cell is a nearly linear curve. The above proposition then follows from a glueing argument.

In this paper we let $I$ denote the unit interval $[0,1]$ and say that $f:I \to \mathbb{R}^d$ is piece-wise $C^1$ if it is continuous and there exists a collection $T_1,\ldots, T_k$ of closed intervals covering $I$, on the interiors of which $f$ is $C^1$.

\begin{proposition}
\label{prop:ectbound}
Let $\gamma:I\to\mathbb{R}^d$ piece-wise $C^1$ map, with $X$ being the image of $\gamma$. Fix any $v\in S^{d-1}$. Let $a$ and $b$ be the minimal and maximal values of $f:x\mapsto \langle v, \gamma(x)\rangle$ on $I$ respectively. Then
\begin{equation*}
    \int_a^b |\ECT_\gamma(v,t)|\;\mathrm{d}t \leq V(f),
\end{equation*}
where $V(f)$ denotes the variation of $f$:
\begin{equation*}
    V(f) := \int_0^1 |f'|\;\mathrm{d}t.
\end{equation*}
For $t\geq b$ we have $\ECT_\gamma(v,t) = 1$. For $t<a$ we have $\ECT_\gamma(v,t) = 0$. In particular, $\ECT_\gamma(v,t)$ is defined almost everywhere.
\end{proposition}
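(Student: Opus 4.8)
The plan is to re-express $\ECT_\gamma(v,t)$ as the Euler characteristic of a sublevel set of $f$ and then compare a component count against the total variation via the one-dimensional coarea formula. Since $\gamma^{-1}\{x : \langle x, v\rangle \le t\} = \{s \in I : f(s) \le t\} = f^{-1}((-\infty,t])$, we have $\ECT_\gamma(v,t) = \chi\big(f^{-1}((-\infty,t])\big)$. As $f$ is continuous, this sublevel set is closed in $[0,1]$, and whenever it has finitely many connected components each is a point or a closed interval, hence contractible; by the formula recalled in Section~\ref{sec:background} its Euler characteristic then equals the number $N(t)$ of these components, so $|\ECT_\gamma(v,t)| = N(t) \ge 0$. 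The two boundary claims are immediate: for $t \ge b$ the sublevel set is all of $I$, which is connected, giving $\ECT_\gamma(v,t) = 1$; for $t < a$ it is empty, giving $\ECT_\gamma(v,t) = 0$.

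For the main inequality I would invoke the Banach indicatrix theorem: since $f$ is continuous and of bounded variation (being piecewise $C^1$ on a compact interval), $V(f) = \int_{\RR} N_f(t)\,\mathrm{d}t$, where $N_f(t) := \#\{s \in I : f(s) = t\}$ counts preimages. Because $N_f$ vanishes outside $[a,b]$, it suffices to prove the pointwise bound $N(t) \le N_f(t)$ for almost every $t \in (a,b)$; integrating then gives $\int_a^b |\ECT_\gamma(v,t)|\,\mathrm{d}t = \int_a^b N(t)\,\mathrm{d}t \le \int_a^b N_f(t)\,\mathrm{d}t = V(f)$. The remark that $\ECT_\gamma(v,t)$ is defined for almost every $t$ falls out of the same regularity input, since sublevel sets at regular values have finitely many components and hence finitely generated homology.

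To establish $N(t) \le N_f(t)$ almost everywhere, I would restrict to regular values: by Sard's theorem applied on each $C^1$ piece, together with excluding the finitely many breakpoint values and the two values $f(0), f(1)$, almost every $t$ has level set $\{f = t\} = \{s_1 < \cdots < s_m\}$ with $f$ crossing $t$ transversally at each $s_j$. The sign of $f-t$ is then constant on each open interval cut out by the $s_j$ and alternates across them, so the components of $\{f \le t\}$ correspond exactly to the maximal open intervals on which $f < t$ (each transversal crossing point is glued to the $f<t$ side, never isolated); an alternating-sign count bounds their number by $\lceil (m+1)/2\rceil$. For $t \in (a,b)$ the value lies strictly between $\min f$ and $\max f$, forcing $m \ge 1$, and for $m \ge 1$ one checks $\lceil (m+1)/2\rceil \le m = N_f(t)$, as required.

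The main obstacle I anticipate is the bookkeeping in this last step rather than any deep difficulty: one must correctly attribute each component's endpoints either to a transversal crossing in $\{f = t\}$ or to an endpoint of $I$, arrange the piecewise structure so that Sard's theorem still yields almost-everywhere regularity, and confirm that the edge cases (components degenerating to points, or $t$ approaching $a$ or $b$) respect the inequality. Checking that the Banach indicatrix identity applies to merely piecewise $C^1$ functions is routine but worth stating explicitly.
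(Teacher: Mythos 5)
Your proof is correct, but it takes a genuinely different route from the paper's. The paper does not invoke Banach's indicatrix theorem; instead it proves from scratch a component-counting analogue of it, namely $V(f)=\int_a^b\big|\pi_0[f^{-1}(t)]\big|\,\mathrm{d}t$, by splitting $I$ into the open sets where $f'>0$ and $f'<0$ and the remainder $C(f)$, computing the variation monotone piece by monotone piece, and using Sard's theorem only to discard the critical values $f(C(f))$. Its pointwise comparison is then $\big|\pi_0[f^{-1}(-\infty,t]]\big|\le\big|\pi_0[f^{-1}(t)]\big|$, proved for \emph{every} $t\in[a,b]$ by a connectedness/intermediate-value argument (a path-component of the sublevel set missing the level set would have to be a path-component of $I$ itself, contradicting the maximality of $b$), with no transversality needed. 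You instead cite the classical point-counting identity $V(f)=\int_{\mathbb{R}} N_f(t)\,\mathrm{d}t$ and prove $N(t)\le N_f(t)$ only at regular values, via Sard plus an alternating-sign count; this suffices after integration, and your bookkeeping is sound (each transversal crossing is adjacent to exactly one interval where $f<t$, so components of $\{f\le t\}$ biject with maximal intervals of $\{f<t\}$, giving $N(t)\le\lceil (m+1)/2\rceil\le m$ for $m\ge1$). Your route is shorter if Banach's theorem is taken as known; the paper's route buys self-containedness, and its pointwise inequality, being valid for all $t$, also yields the almost-everywhere definedness claim directly. One caveat in your write-up: under this paper's definition of piece-wise $C^1$ (only $C^1$ on the \emph{interiors} of the covering intervals), such a function need not have bounded variation --- e.g.\ $t\mapsto t\sin(1/t)$ qualifies --- so your parenthetical justification of BV is wrong as stated. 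This is harmless: in that case $V(f)=\int_0^1|f'|=\infty$ and the inequality is trivial, and Banach's identity holds for all continuous functions anyway; but you do need the check you flagged, that the classical total variation agrees with $\int_0^1|f'|$ for functions that are merely $C^1$ on the interiors of the pieces.
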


\begin{proof}
The last two statements follow from the fact that $I$ has Euler characteristic one and the empty set has Euler characteristic zero. We now prove the remainder of the proposition.

The main goal of the proof is to establish the following two equalities:
\begin{equation*}
    \begin{split}
        \int_a^b |\ECT_\gamma(v,t)|\;\mathrm{d}t &= \int_a^b\Big|\pi_0\big[f^{-1}(-\infty,t]\big]\Big|\;\mathrm{d}t\\
        V(f) &=\int_a^b\Big|\pi_0\big[f^{-1}(t)\big]\Big|\;\mathrm{d}t.
    \end{split}
\end{equation*}

The first of these equalities follows from the fact that every subset of the unit interval is component-wise contractible. Hence, the Euler characteristic of any subset of $I$ is the number of path-components it has. The second of these equalities is more difficult to show, and its establishment is the bulk of the proof. Once both equalities are shown, demonstrating that the first integrand on the right is less than or equal to the second integrand on the right completes the proof.

To begin, notice that $f$ is piece-wise $C^1$ since $\gamma$ is. For the proof, we let $G(f)$ be the set of points where $f'$ is defined and positive, $D(f)$ be the set of points where $f'$ is defined and negative, and $C(f)$ be the set of points in $I$ neither in $G(f)$ or $D(f)$. Since $f$ is piece-wise $C^1$, $G(f)$ and $D(f)$ are both open. Meanwhile, all but finitely many points of $C(f)$ satisfy $f'(x) = 0$. Clearly, $G(f)$, $D(f)$, and $C(f)$ partition $I$. Hence,
\begin{equation*}
    \begin{split}
        V(f) &= \int_{G(f)}f'(x)\;\mathrm{d}x + \int_{D(f)}-f'(x)\;\mathrm{d}x + \int_{C(f)} |f'(x)|\;\mathrm{d}x \\
        &= \int_{G(f)}f'(x)\;\mathrm{d}x + \int_{D(f)}-f'(x)\;\mathrm{d}x,
    \end{split}
\end{equation*}
since $f' = 0$ almost everywhere on $C(f)$.
Since both $G(f)$ and $D(f)$ are open, each is a countable union of open subintervals of $I$, which we denote by $\{I_k\}_{k\in \Xi}$ and $\{J_l\}_{l\in\Theta}$ respectively. On each $I_k$ $f$ is increasing and on each $J_l$, $f$ is decreasing. Hence, we get
\begin{equation*}
    \begin{split}
        V(f) &= \int_{G(f)}f'(x)\;\mathrm{d}x + \int_{D(f)}-f'(x)\;\mathrm{d}x\\
        &= \sum_{k\in \Xi}\int_{I_k}f'(x)\;\mathrm{d}x + \sum_{l\in\Theta}\int_{J_l}-f'(x)\;\mathrm{d}x\\
        &=\sum_{k\in \Xi}\int_{\mathbb{R}}\Big|\pi_0\big[(f|_{I_k})^{-1}(t)\big]\Big|\;\mathrm{d}t + \sum_{l\in\Theta}\int_{\mathbb{R}}\Big|\pi_0\big[(f|_{J_l})^{-1}(t)\big]\Big|\;\mathrm{d}t \\
        &=\int_{\mathbb{R}}\Big|\pi_0\big[(f|_{G(f)})^{-1}(t)\big]\Big|\;\mathrm{d}t + \int_{\mathbb{R}}\Big|\pi_0\big[(f|_{D(f)})^{-1}(t)\big]\Big|\;\mathrm{d}t \\
        &=\int_{\mathbb{R}}\Big|\pi_0\big[(f|_{G(f)\cup D(f)})^{-1}(t)\big]\Big|\;\mathrm{d}t.
    \end{split}
\end{equation*}
Here, the last line follows from the fact that if $x$ and $y$ have the same $f$ value, each point in $G(f)$ or $D(f)$, then by the definition of these sets there must be a point between them that obtains either a smaller or larger value of $f$. The line before follows from similar reasoning. If it is granted that $f(C(f))$ has measure zero we then have
\begin{equation*}
    \begin{split}
        V(f) &=\int_{\mathbb{R}}\Big|\pi_0\big[(f|_{G(f)\cup D(f)})^{-1}(t)\big]\Big|\;\mathrm{d}t\\
        &=\int_{\mathbb{R}}\Big|\pi_0\big[(f|_{G(f)\cup D(f)})^{-1}(t)\big]\Big|\;\mathrm{d}t + \int_{\mathbb{R}}\Big|\pi_0\big[(f|_{C(f)})^{-1}(t)\big]\Big|\;\mathrm{d}t\\
        &=\int_{\mathbb{R}}\Big|\pi_0\big[f^{-1}(t)\big]\Big|\;\mathrm{d}t\\
        &=\int_a^b\Big|\pi_0\big[f^{-1}(t)\big]\Big|\;\mathrm{d}t.
    \end{split}
\end{equation*}
Here, the second to last equality follows from the fact that if $x$ is in $C(f)$ and $y$ is in $G(f)$ or $D(f)$, then by definition of $G(f)$ and $D(f)$ there is a point between $x$ and $y$ with an $f$ value not equal to $f(y)$. In particular, the integrand at the end of this equation must be finite almost everywhere.

Now we show that indeed $f(C(f))$ has measure zero. Since $f$ is piece-wise $C^1$, let $T_1,\ldots,T_k$ be closed intervals covering $I$ on the interiors of which $f$ is $C^1$. Consider $Z_i$, the subset of the interior of $I_i$ with $f' = 0$. By Sard's Theorem (see for example \cite[Theorem 7.2]{sard1942measure}), $f(Z_i)$ has measure zero. We have that $C(f)$ is a subset of $Z_1\cup\ldots\cup Z_k\cup \partial T_1 \cup \ldots \cup \partial T_k$, whose image under $f$ has measure zero. Thus $f(C(f))$ has measure zero.

Hence, the desired result follows once we establish that for any $t\in[a,b]$,
\begin{equation*}
    \Big|\pi_0\big[f^{-1}(-\infty,t]\big]\Big| \leq \Big|\pi_0\big[f^{-1}(t)\big]\Big|.
\end{equation*}
This inequality implies that $\ECT_f(v,t)$ is defined for almost all $t$ since $\ECT_f(v,t)$ is just the left-hand side of this inequality, which is positive-valued, and bounded by a function that is finite for almost all $t$.

To prove this statement, it suffices to show that any path-component of $f^{-1}(-\infty,t]$ contains a point with $f$ value $t$. Suppose otherwise, that there is a path-component $C$ of $f^{-1}(-\infty,t]$ with $f(C)<t$. By continuity of $f$, $C$ must also be a path-component of $f^{-1}(-\infty,b]$. Indeed, suppose $\alpha$ is a path from the complement of $C$ to $C$. Thus, every neighborhood of $\alpha^{-1}(C)$ must intersect $(f\circ\alpha)^{-1}(t,b]$. But this produces a contradiction of the intermediate value theorem. So $C$ is a path-component of $I = f^{-1}(-\infty,b]$ and hence is $I$. The fact that $f(C)<t\leq b$ thus contradicts the definition of $b$ as the maximum of $f$, completing the proof.
\end{proof}

\begin{remark}
Suppose $\gamma: I \to \mathbb{R}^d$ is continuous and definable with respect to an o-minimal structure on $\mathbb{R}$.
By \cite[Chapter 7, Theorem 3.2]{van1998tame}), $\gamma$ is piece-wise $C^1$ and hence the above result applies.
\end{remark}

\begin{remark}
In the case where $f$ is tame, this result is implied by a stronger result of \cite[Corollary 4.6]{biswas2022window} for tame functions. The main contribution of this proposition is that the stated bound still holds when $f$ is not tame.
\end{remark}

With the previous result in mind, we establish a bound on the variation of a curve that is approximately straight.

\begin{lemma}
Suppose $\gamma:I \to \mathbb{R}^d$ is a piece-wise differentiable path with length $L$ and the first coordinate $\gamma_1$ satisfies $|\gamma_1(1)-\gamma_1(0)| = L_x$. Then the variation of any other coordinate function $\gamma_n$ of $\gamma$ is bounded by
\begin{equation}
    V(\gamma_n) \leq \sqrt{L^2 - L_x^2}.
\end{equation}
\label{lem:varbound}
\end{lemma}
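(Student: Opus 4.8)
The plan is to reduce the whole statement to a single application of the triangle inequality for vector-valued integrals, after an algebraic manipulation that converts the awkward absolute value in the variation into an honest Euclidean norm. First I would record the pointwise estimate that drives the argument: since $|\gamma'(t)|^2 = \sum_{i} \gamma_i'(t)^2 \ge \gamma_1'(t)^2 + \gamma_n'(t)^2$ wherever $\gamma$ is differentiable, we obtain $|\gamma_n'(t)| \le \sqrt{|\gamma'(t)|^2 - \gamma_1'(t)^2}$ at such points. Integrating over $I$ gives
\[
V(\gamma_n) = \int_0^1 |\gamma_n'(t)|\,\mathrm{d}t \le \int_0^1 \sqrt{|\gamma'(t)|^2 - \gamma_1'(t)^2}\,\mathrm{d}t,
\]
so it suffices to bound the right-hand integral by $\sqrt{L^2 - L_x^2}$.

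The key idea is to view this integrand as one coordinate of an auxiliary planar curve. Define $w : I \to \mathbb{R}^2$ by $w(t) = \big(\gamma_1'(t), \sqrt{|\gamma'(t)|^2 - \gamma_1'(t)^2}\big)$, which is defined and piecewise continuous wherever $\gamma'$ is, and whose Euclidean norm is exactly $\|w(t)\|_2 = \sqrt{\gamma_1'(t)^2 + (|\gamma'(t)|^2 - \gamma_1'(t)^2)} = |\gamma'(t)|$. Applying the triangle inequality for integrals of $\mathbb{R}^2$-valued functions yields $\big\|\int_0^1 w\,\mathrm{d}t\big\|_2 \le \int_0^1 \|w(t)\|_2\,\mathrm{d}t = \int_0^1 |\gamma'(t)|\,\mathrm{d}t = L$. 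On the other hand, the first component of $\int_0^1 w\,\mathrm{d}t$ is $\int_0^1 \gamma_1'(t)\,\mathrm{d}t = \gamma_1(1) - \gamma_1(0)$, whose absolute value is $L_x$, while its second component is precisely the integral we wish to bound. Squaring the inequality and subtracting $L_x^2$ then gives $\big(\int_0^1 \sqrt{|\gamma'|^2 - \gamma_1'^2}\,\mathrm{d}t\big)^2 \le L^2 - L_x^2$, which combined with the first paragraph completes the proof.

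The individual steps are routine once the auxiliary function $w$ is in hand, so the only genuine obstacle is spotting that construction; equivalently, one must recognise that the target inequality is a disguised form of the reverse triangle inequality in the Minkowski plane of signature $(1,1)$, where future-pointing causal vectors satisfy $\|v+w\| \ge \|v\| + \|w\|$. The remaining work is bookkeeping for the piecewise structure: I would partition $I$ into the closed intervals on whose interiors $\gamma$ is $C^1$, note that $\sqrt{|\gamma'|^2 - \gamma_1'^2} \le |\gamma'|$ is integrable because $L < \infty$, and observe that the finitely many points of non-differentiability contribute nothing to any of the integrals involved.
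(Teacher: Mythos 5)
Your proof is correct, and while it rests on the same underlying fact as the paper's — the chord of a curve is no longer than its arc, i.e.\ $\big\|\int_0^1 w\,\mathrm{d}t\big\|_2 \le \int_0^1 \|w(t)\|_2\,\mathrm{d}t$ — the construction is genuinely different. The paper reduces without loss of generality to bounding $V(\gamma_2)$ with $\gamma(0)=0$, then monotonizes: it replaces $\gamma_2'$ by $|\gamma_2'|$ to get a curve $\bar\gamma$ in $\mathbb{R}^d$ with the same speed, notes $V(\gamma_2)=\bar\gamma_2(1)$, and bounds $\bar\gamma_2(1)$ geometrically, since $\bar\gamma(1)$ lies in the ball of radius $L$ intersected with the hyperplane $\{y_1=L_x\}$, where the largest second coordinate is $\sqrt{L^2-L_x^2}$. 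You instead collapse all coordinates other than the first into the scalar $\sqrt{|\gamma'|^2-\gamma_1'^2}$, form the planar curve $w=(\gamma_1',\sqrt{|\gamma'|^2-\gamma_1'^2})$ with the same speed as $\gamma$, and finish by squaring the integral triangle inequality. Your route needs no normalization or single-coordinate reduction, handles every $n\neq 1$ simultaneously, and in fact proves the stronger statement that the arc length of the projection of $\gamma$ onto the hyperplane orthogonal to $e_1$, namely $\int_0^1\sqrt{|\gamma'|^2-\gamma_1'^2}\,\mathrm{d}t$, is at most $\sqrt{L^2-L_x^2}$; this quantity dominates each $V(\gamma_n)$ pointwise. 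What the paper's version buys in exchange is geometric transparency (the disk-and-hyperplane picture) at the cost of the extra monotonization step. Both arguments invoke the same implicit regularity — the fundamental theorem of calculus for $\gamma_1$ and the identification of length with $\int_0^1|\gamma'|\,\mathrm{d}t$ — so your proposal introduces no hypotheses beyond those the paper itself uses.
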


\begin{proof}
Without loss of generality, we can assume that $\gamma(0) = 0$, $\gamma_1(1)= L_x$, and we can show this bound holds for $\gamma_2$ only.

From $\gamma$ we can construct another function $\bar{\gamma}$ by
\begin{equation*}
    \bar{\gamma}(t) := \int_0^t (\gamma'_1(t),\;|\gamma'_2(t)|, \;\gamma'_3(t),\ldots,\; \gamma'_d(t))\; \mathrm{d}t.
\end{equation*}
Put differently, $\bar{\gamma}$ has the same coordinate functions as $\gamma$ except in the second coordinate, where $\bar{\gamma}_2$ has the same absolute value of its derivative as $\gamma$, but is never decreasing. It is immediate that $\gamma$ and $\bar{\gamma}$ have the same length, value of $\gamma_1(1)$, and variation in the second coordinate. Hence, it suffices to show that the lemma holds for $\gamma_2$ for curves $\gamma$ with length $L$, $\gamma(0) = 0$, $\gamma_1(1) = L_x$, and $\gamma'_2(t)\geq0$ for all $t$.

The fact that $\gamma$ has length $L$ implies that $\gamma(1)$ lies in the closed $d$-disk of radius $L$ centred at the origin. The fact that $\gamma_1(x) = L_x$ implies that $\gamma(1)$ lies on the hyperplane of points with the first coordinate $L_x$. Elementary geometry shows that the intersection of the disk and the hyperplane is
\begin{equation*}
    \{(y_1,y_2,\ldots,y_n)\in \mathbb{R}^d : y_1 = L_x,\; y_2^2+\ldots +y_d^2 \leq L^2-L_x^2\}.
\end{equation*}
It is easily seen that the greatest value of $y_2$ on this set is $\sqrt{L^2-L_x^2}$. So $\gamma_2(1)$ is bounded above by this value. Hence,
\begin{equation*}
    V(\gamma_2) = \int_0^1 \big|\gamma_2'(t)\big|\;\mathrm{d}t = \int_0^1 \gamma_2'(t)\;\mathrm{d}t = \gamma_2(1)-\gamma_2(0) = \gamma_2(1) \leq \sqrt{L^2-L_x^2}.
\end{equation*}
\end{proof}

We can now bound the $L_1$ distance between Euler characteristic transforms of nearby curves, assuming one of them is approximately straight.
\begin{proposition}
Let $\alpha:I\to\mathbb{R}^d$ be a piece-wise $C^1$ map such that the distance of $\alpha(0)$ to $\alpha(1)$ is $L$, with arc length no greater than $L+\epsilon$. Let $\beta:I\to\mathbb{R}^d$ be another piece-wise $C^1$ map with arc length no greater than $L+2\epsilon$, and endpoints within $\epsilon$ of the corresponding endpoints of $\alpha$. Then
\begin{equation*}
    \big\|\ECT_\alpha-\ECT_\beta\big\| \leq \begin{cases}
            8\sqrt{L\epsilon} &  L > 2\epsilon \\
            10 \epsilon & L \leq 2\epsilon.
        \end{cases}
\end{equation*}
\label{prop:localstab}
\end{proposition}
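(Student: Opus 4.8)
The plan is to fix a single direction $v \in S^{d-1}$ and bound $\int_{\mathbb R}|\ECT_\alpha(v,t)-\ECT_\beta(v,t)|\,\mathrm dt$ by the stated right-hand side with constants independent of $v$; taking the supremum over $v$ then gives the norm bound. Write $f:=\langle v,\alpha\rangle$ and $g:=\langle v,\beta\rangle$ for the two height functions, both piece-wise $C^1$ on $I$. The starting point is Proposition~\ref{prop:ectbound}: $\ECT_\alpha(v,t)$ is the number of path-components of $f^{-1}(-\infty,t]$, hence $0$ for $t<\min f$, equal to $1$ for $t\geq\max f$, at least $1$ on $[\min f,\max f]$, and $\int_{\min f}^{\max f}\ECT_\alpha(v,t)\,\mathrm dt\leq V(f)$. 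Writing $s_c$ for the Heaviside step $s_c(t)=\mathbf 1_{\{t\geq c\}}$, this yields $\int_{\mathbb R}|\ECT_\alpha(v,\cdot)-s_{\min f}|=V(f)-(\max f-\min f)$, the total ``backtracking'' of $f$, and likewise for $g$. Passing through these two step functions by the triangle inequality gives
\[
\int_{\mathbb R}\big|\ECT_\alpha(v,\cdot)-\ECT_\beta(v,\cdot)\big|\leq\big(V(f)-\mathrm{range}(f)\big)+|\min f-\min g|+\big(V(g)-\mathrm{range}(g)\big),
\]
and it remains to control the three terms uniformly in $v$.

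For the backtracking terms I would invoke Lemma~\ref{lem:varbound}. Decompose $v=\cos\theta\,u+\sin\theta\,w$, where $u$ is the unit vector along the chord of $\alpha$ and $w\perp u$ is a unit vector, so that $V(f)\leq|\cos\theta|\,V(\langle u,\alpha\rangle)+|\sin\theta|\,V(\langle w,\alpha\rangle)$. Rotating coordinates so $u=e_1$, Lemma~\ref{lem:varbound} bounds $V(\langle w,\alpha\rangle)\leq\sqrt{(L+\epsilon)^2-L^2}=\sqrt{2L\epsilon+\epsilon^2}$, while $V(\langle u,\alpha\rangle)$ is at most the arc length $L+\epsilon$ and $\mathrm{range}(f)\geq|\langle v,\alpha(1)-\alpha(0)\rangle|=|\cos\theta|\,L$. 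Subtracting leaves $V(f)-\mathrm{range}(f)\leq|\cos\theta|\,\epsilon+|\sin\theta|\sqrt{2L\epsilon+\epsilon^2}\leq\epsilon+\sqrt{2L\epsilon+\epsilon^2}$. The analogous estimate for $\beta$ uses that, since its endpoints move by at most $\epsilon$, its chord has length in $[L-2\epsilon,\,L+2\epsilon]$ and its arc length is at most $L+2\epsilon$; this gives $V(g)-\mathrm{range}(g)\leq 4\epsilon+\sqrt{8L\epsilon}$.

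For the offset $|\min f-\min g|$ I would use that a nearly-straight curve stays close to its chord: every point of $\alpha$ lies in the ellipsoid with foci $\alpha(0),\alpha(1)$ and sum-of-distances equal to the arc length, whose semi-minor axis is $\tfrac12\sqrt{(L+\epsilon)^2-L^2}$, so $\alpha(t)$ is within $\tfrac12\sqrt{2L\epsilon+\epsilon^2}$ of the chord segment and hence $\min f\geq\min(f(0),f(1))-\tfrac12\sqrt{2L\epsilon+\epsilon^2}$; the same argument for $g$ gives $\min g\geq\min(g(0),g(1))-\tfrac12\sqrt{8L\epsilon}$. Combined with $|f(i)-g(i)|\leq\epsilon$ at the endpoints $i=0,1$, this yields $|\min f-\min g|\leq\epsilon+\tfrac12\sqrt{2L\epsilon+\epsilon^2}+\sqrt{2L\epsilon}$.

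Summing the three bounds finishes the proof after a case split. When $L>2\epsilon$ one has $\epsilon\leq\sqrt{L\epsilon/2}$, so every $\epsilon$-term is absorbed into a $\sqrt{L\epsilon}$-term and careful bookkeeping of the constants collapses the sum to at most $8\sqrt{L\epsilon}$; when $L\leq 2\epsilon$ both curves have arc length $O(\epsilon)$, whence $V(f),V(g)=O(\epsilon)$ and $\sqrt{L\epsilon}\leq\sqrt2\,\epsilon$, so every term is $O(\epsilon)$ and the constants sum to at most $10\epsilon$. I expect the main obstacle to be twofold: first, making all estimates uniform in $v$ — the parallel/perpendicular decomposition feeding into Lemma~\ref{lem:varbound} is the crux, and the worst directions are those nearly orthogonal to the chord, where $\mathrm{range}(f)$ collapses yet the ECT can still count several components; and second, tightening the constant bookkeeping enough to reach exactly $8$ and $10$ and to reconcile the two regimes at the boundary $L=2\epsilon$.
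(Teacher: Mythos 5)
Your overall architecture is sound and is a legitimate variant of the paper's argument: the step-function decomposition
\[
\int_{\mathbb R}\big|\ECT_\alpha-\ECT_\beta\big|\;\mathrm{d}t\leq\big(V(f)-\mathrm{range}(f)\big)+|\min f-\min g|+\big(V(g)-\mathrm{range}(g)\big)
\]
is valid (note it is an inequality, not the equality you state, since Proposition~\ref{prop:ectbound} only bounds $\int\ECT_\alpha$ by $V(f)$ from above; this is harmless), and each of your three ingredient estimates is individually correct. The paper instead compares the two transforms directly on the intervals determined by the four extremes $a,b,c,d$, using $|\ECT_\alpha-\ECT_\beta|\leq\ECT_\alpha+\ECT_\beta-2$ on the overlap, but the cancellation being exploited is the same.

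The genuine gap is the final step, where you assert that ``careful bookkeeping of the constants collapses the sum to at most $8\sqrt{L\epsilon}$.'' It does not, because you have bounded each of the three terms by its worst case over the direction $v$ and then summed these worst cases. Your stated bounds add up to
\[
6\epsilon+\tfrac{3}{2}\sqrt{2L\epsilon+\epsilon^2}+3\sqrt{2L\epsilon},
\]
and this exceeds $8\sqrt{L\epsilon}$ on a whole range of aspect ratios: at $L=3\epsilon$ it equals roughly $17.3\,\epsilon$ while $8\sqrt{L\epsilon}\approx 13.9\,\epsilon$; in fact the inequality fails for all $L/\epsilon$ between about $2$ and $14$. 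The source of the loss is twofold. First, the terms are not simultaneously extremal in the same way: writing $\theta$ for the angle between $v$ and the chord, your first and third terms are worst when $v$ is perpendicular to the chord ($|\sin\theta|=1$), but there the $\epsilon|\cos\theta|$ and $4\epsilon|\cos\theta|$ contributions vanish, so the blanket $6\epsilon$ cannot actually occur together with the $3\sqrt{2L\epsilon}$-type contributions. Second, your offset bound $|\min f-\min(f(0),f(1))|\leq\tfrac12\sqrt{2L\epsilon+\epsilon^2}$ uses the semi-minor axis of the ellipsoid uniformly in $v$, whereas it is only tight for directions perpendicular to the chord; for nearly parallel directions the correct deviation is the support function $\sqrt{c^2\cos^2\theta+b'^2}-c|\cos\theta|$, which is of order $\epsilon$ rather than $\sqrt{L\epsilon}$. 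Repairing the proof therefore requires carrying the $\theta$-dependence of \emph{all} terms (including the complication that $\beta$'s chord direction differs from $\alpha$'s, substantially so when $L$ is comparable to $\epsilon$) and maximising the combined expression over $\theta$ at the end --- which is exactly the content of the paper's ``tedious lemma'' and is where the constants $8$ and $10$ are actually earned. As written, your proposal establishes the proposition only with worse constants (or only for $L\gtrsim 14\epsilon$), not as stated.
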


\begin{proof}
Let $v$ be an arbitrary unit vector, $w = \alpha(1)-\alpha(0)$, and $\theta$ be the angle between $w$ and the hyperplane normal to $v$. After potentially applying a rotation to $\alpha$ and $\beta$, we may assume that $v = (0,1,0,\ldots,0)$.
By applying another rotation we may assume also that $w$ is only non-zero in the first two coordinates.

Let $f$ denote the inner-product with $v$ and let $a$ and $b$ be the minimum and maximum of $f\circ\alpha$ and $c$ and $d$ be the minimum and maximum of $f\circ\beta$. Throughout the proof, we use the fact that if both $\ECT_\alpha(v,t)$ and $\ECT_\beta(v,t)$ are non-zero, then
\begin{equation*}
    |\ECT_\alpha(v,t) - \ECT_\alpha(v,t)| \leq \ECT_\alpha(v,t) + \ECT_\beta(v,t) - 2,
\end{equation*}
as both Euler characteristic transforms must be positive (since subsets of the interval are component-wise contractible) and greater than 1.

First, suppose $\max(a,c) \leq \min(b,d)$. Then
\begin{equation*}
    \begin{split}
        \int_\mathbb{R}|\ECT_\alpha(v,t) - \ECT_\beta(v,t)|\; \mathrm{d}t &= \int_{\min(a,c)}^{\max(a,c)}|\ECT_\alpha(v,t) - \ECT_\beta(v,t)|\; \mathrm{d}t\\
        &+\int_{\max(a,c)}^{\min(b,d)}|\ECT_\alpha(v,t) - \ECT_\beta(v,t)|\; \mathrm{d}t\\
        &+\int_{\min(b,d)}^{\max(b,d)}|\ECT_\alpha(v,t) - \ECT_\beta(v,t)|\; \mathrm{d}t.\\
    \end{split}
\end{equation*}
Suppose also that $b\leq d$. Then the above expression is bounded by
\begin{equation*}
    \int_{\min(a,c)}^{\max(a,c)}\ECT_\alpha(v,t) + \ECT_\beta(v,t)\, \mathrm{d}t + \int_{\max(a,c)}^{b}\ECT_\alpha(v,t) + \ECT_\beta(v,t) - 2\, \mathrm{d}t + \int_{b}^{d} \ECT_\beta(v,t)-1\, \mathrm{d}t,
\end{equation*}
where we have only had to approximate the middle term. If we additionally suppose $a \leq c$, then our bound is equal to
\begin{equation*}
    \int_{a}^{c}\ECT_\alpha(v,t)\; \mathrm{d}t + \int_{c}^{b}\ECT_\alpha(v,t) + \ECT_\beta(v,t) - 2\; \mathrm{d}t + \int_{b}^{d} \ECT_\beta(v,t)-1\; \mathrm{d}t.
\end{equation*}

Rearranging and by the linearity of the integral, the above is equal to
\begin{equation*}
    \int_{c}^{d}\ECT_\beta(v,t)\; \mathrm{d}t + \int_{a}^{b}\ECT_\alpha(v,t)\mathrm{d}t - 2(b-c) - (d-b).
\end{equation*}
Similar analysis when $a>c$ and/or $b>d$ shows that when $\max(a,c) \leq \min(b,d)$,
\begin{equation}
    \begin{split}
        \int_\mathbb{R}|\ECT_\alpha(v,t) - \ECT_\beta(v,t)|\; \mathrm{d}t &\leq \int_{c}^{d}\ECT_\beta(v,t)\; \mathrm{d}t + \int_{a}^{b}\ECT_\alpha(v,t)\\ 
        &- 2|\min(b,d)-\max(a,c)| - |\max(b,d)-\min(b,d)|.
    \end{split}
    \label{eqn:ectdiff1}
\end{equation}

Note that $|b-a| = L|\sin\theta|$. By hypothesis $\| \alpha(0)-\beta(0)\|\leq \epsilon$, so $|a-c| \leq \epsilon$. Similarly $|b-d|\leq \epsilon$. Hence,
\begin{equation*}
    |\min(b,d)-\max(a,c)| \geq L|\sin\theta|-2\epsilon
\end{equation*}
by the triangle inequality. Of course, the quantity on the left is also positive, so
\begin{equation*}
    |\min(b,d)-\max(a,c)| \geq \max(0,L|\sin\theta|-2\epsilon).
\end{equation*}
Trivially, we also have $|\max(b,d)-\min(b,d)| \geq 0$. Applying these inequalities, Proposition \ref{prop:ectbound}, and Lemma \ref{lem:varbound} to Equation (\ref{eqn:ectdiff1}), we have
\begin{equation}
    \begin{split}
        \int_\mathbb{R}|\ECT_\alpha(v,t) - \ECT_\beta(v,t)|\; \mathrm{d}t &\leq \sqrt{(L+2\epsilon)^2-\max(0,L|\cos\theta|-2\epsilon)^2}\\
        &+\sqrt{(L+\epsilon)^2-L^2|\cos^2\theta|}\\
        &-2\max(0,L|\sin\theta|-2\epsilon).
    \end{split}
    \label{eqn:ectdiff2}
\end{equation}
In the application of Lemma \ref{lem:varbound} for the first term, we use that $|\beta_1(1)-\beta_1(0)|\leq \max(0,L|\cos\theta|-2\epsilon)$.

Otherwise, $\max(a,c) \geq \min(b,d)$, so either $a\leq b\leq c\leq d$ or $c\leq d\leq a\leq b$. In either of these cases, we must have that $L|\sin\theta|\leq \epsilon$. Consider the first of these cases.
We observe
\begin{equation*}
    \begin{split}
        \int_\mathbb{R}|\ECT_\alpha(v,t) - \ECT_\beta(v,t)|\; \mathrm{d}t &= \int_{a}^{b}|\ECT_\alpha(v,t) - \ECT_\beta(v,t)|\; \mathrm{d}t\\
        &+\int_{b}^{c}|\ECT_\alpha(v,t) - \ECT_\beta(v,t)|\; \mathrm{d}t\\
        &+\int_{c}^{d}|\ECT_\alpha(v,t) - \ECT_\beta(v,t)|\; \mathrm{d}t.\\
    \end{split}
\end{equation*}
This quantity is equal to
\begin{equation*}
    \int_{a}^{b}\ECT_\alpha(v,t)\; \mathrm{d}t + \int_{b}^{c}1\; \mathrm{d}t +\int_{c}^{d}\ECT_\beta(v,t) - 1\; \mathrm{d}t,
\end{equation*}
Bounding from above, we have
\begin{equation*}
    \begin{split}
        \int_\mathbb{R}|\ECT_\alpha(v,t) - \ECT_\beta(v,t)|\; \mathrm{d}t &\leq \int_{a}^{b}\ECT_\alpha(v,t)\; \mathrm{d}t +\int_{c}^{d}\ECT_\beta(v,t)\; \mathrm{d}t  + (c-b) - (d-c)\\
        &\leq\int_{a}^{b}\ECT_\alpha(v,t)\; \mathrm{d}t +\int_{c}^{d}\ECT_\beta(v,t)\; \mathrm{d}t  + (c-b).
    \end{split}
\end{equation*}
Similar analysis when $c\leq d\leq a\leq b$ shows that in general, if $\max(a,c) \leq \min(b,d)$, then
\begin{equation*}
    \int_\mathbb{R}|\ECT_\alpha(v,t) - \ECT_\beta(v,t)|\; \mathrm{d}t \leq \int_{a}^{b}\ECT_\alpha(v,t)\; \mathrm{d}t + \int_{c}^{d}\ECT_\beta(v,t)\; \mathrm{d}t + \min(|c-b|,|d-a|).
\end{equation*}
By the triangle inequality, $\min(|c-b|,|d-a|) \leq \epsilon - L\sin\theta$. Applying Proposition \ref{prop:ectbound} and Lemma \ref{lem:varbound} once again, we see
\begin{equation*}
    \begin{split}
        \int_\mathbb{R}|\ECT_\alpha(v,t) - \ECT_\beta(v,t)|\; \mathrm{d}t&\leq \sqrt{(L+2\epsilon)^2-\max(0,L|\cos\theta|-2\epsilon)^2}\\
        &+ \sqrt{(L+\epsilon)^2-L^2|\cos^2\theta|}\\ 
        &+ \max(0,\epsilon-L|\sin\theta|).
    \end{split}
\end{equation*}

In summary, $\int_\mathbb{R}|\ECT_\alpha(v,t) - \ECT_\beta(v,t)|\; \mathrm{d}t$ is bounded by
\begin{equation*}
    \sqrt{(L+2\epsilon)^2-\max(0,L|\cos\theta|-2\epsilon)^2}+\sqrt{(L+\epsilon)^2-L^2|\cos^2\theta|} - 2\max(0,L|\sin\theta|-2\epsilon)
\end{equation*}
whenever $L|\sin\theta|\geq \epsilon$. Otherwise, either we still have $\max(a,c) \leq \min(b,d)$ and the above bound still holds or $\max(a,c) \geq \min(b,d)$ and we instead have the bound
\begin{equation*}
    \sqrt{(L+2\epsilon)^2-\max(0,L|\cos\theta|-2\epsilon)^2}+\sqrt{(L+\epsilon)^2-L^2|\cos^2\theta|} + \max(0,\epsilon-L|\sin\theta|).
\end{equation*}

Hence, in general, 
\begin{equation*}
    \begin{split}
        \int_\mathbb{R}|\ECT_\alpha(v,t) - \ECT_\beta(v,t)|\; \mathrm{d}t &= \sqrt{(L+2\epsilon)^2-\max(0,L|\cos\theta|-2\epsilon)^2}\\
        &+ \sqrt{(L+\epsilon)^2-L^2|\cos^2\theta|}\\ 
        &- 2\max(0,L|\sin\theta|-2\epsilon) + \max(0,\epsilon-L|\sin\theta|).
    \end{split}
\end{equation*}
The proof is complete once we have established the following tedious lemma.
\end{proof}

\begin{lemma}
The function
\begin{equation*}
    \begin{split}
        f(\theta) &= \sqrt{(L+2\epsilon)^2-\max(0,L|\cos\theta|-2\epsilon)^2} + \sqrt{(L+\epsilon)^2-L^2|\cos^2\theta|}\\ 
        &- 2\max(0,L|\sin\theta|-2\epsilon) + \max(0,\epsilon-L|\sin\theta|)
    \end{split}
\end{equation*}
is bounded above by
\begin{equation*}
    f(\theta) \leq \begin{cases}
            8\sqrt{L\epsilon} &  L > 2\epsilon \\
            10 \epsilon & L \leq 2\epsilon. 
        \end{cases}
\end{equation*}
\end{lemma}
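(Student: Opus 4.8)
My plan is to strip away the geometry and treat this as a one–variable estimate. Since $f$ depends on $\theta$ only through $|\cos\theta|$ and $|\sin\theta|$, I would first reduce to $\theta\in[0,\pi/2]$ and write $c=\cos\theta\ge 0$, $s=\sin\theta\ge 0$ with $c^2+s^2=1$, dropping the absolute values. The two radicands then collapse after completing the square: I would record the identities
\[
(L+\epsilon)^2-L^2c^2 = L^2 s^2 + 2L\epsilon + \epsilon^2,
\]
and, in the regime $Lc\ge 2\epsilon$ where the first $\max$ is active,
\[
(L+2\epsilon)^2-(Lc-2\epsilon)^2 = L^2 s^2 + 4L\epsilon(1+c).
\]
The crucial structural observation is that \emph{both} square-root terms now have the shape $\sqrt{L^2 s^2 + (\text{const})\cdot L\epsilon + O(\epsilon^2)}$, so by the subadditivity $\sqrt{p+q}\le\sqrt p+\sqrt q$ each is at most $Ls$ plus a term of order $\sqrt{L\epsilon}$ (plus at most a further $\epsilon$). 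This is what lets the linear-in-$L$ pieces cancel against the negative term $-2\max(0,Ls-2\epsilon)$.

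The case $L\le 2\epsilon$ is immediate and I would dispatch it first: here $Lc\le L\le 2\epsilon$ and $Ls\le L\le 2\epsilon$ kill both $\max$ terms inside the radicals and the $-2\max(\cdots)$ term, while $\sqrt{(L+\epsilon)^2-L^2c^2}\le L+\epsilon$ and $\max(0,\epsilon-Ls)\le\epsilon$. Summing gives $f\le (L+2\epsilon)+(L+\epsilon)+\epsilon = 2L+4\epsilon\le 8\epsilon\le 10\epsilon$, so the claimed bound holds with room to spare.

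For the main case $L>2\epsilon$ I would split $[0,\pi/2]$ at the breakpoints $s=\epsilon/L$, $s=2\epsilon/L$, and $c=2\epsilon/L$. In the central regime where $Lc\ge 2\epsilon$ and $Ls\ge 2\epsilon$, subadditivity bounds the two radicals by $Ls+2\sqrt{2L\epsilon}$ and $Ls+\sqrt{2L\epsilon}+\epsilon$, the term $-2\max(0,Ls-2\epsilon)=-2Ls+4\epsilon$ cancels the two $Ls$'s and $\max(0,\epsilon-Ls)=0$, leaving roughly $3\sqrt{2L\epsilon}+5\epsilon$. I would then homogenize using $L>2\epsilon\Rightarrow \epsilon<\sqrt{L\epsilon}/\sqrt2$, turning every residual $\epsilon$ into a multiple of $\sqrt{L\epsilon}$, and check the accumulated constant stays below $8$.

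The delicate part, and the step I expect to be the main obstacle, is the degenerate regimes where the cancelling term $-2\max(0,Ls-2\epsilon)$ is \emph{inactive}. For small $\theta$ the cancellation must instead come from $s$ itself being small, while for large $\theta$ (where $c<2\epsilon/L$ forces the first radical to take its truncated value $L+2\epsilon$) I would supply the missing cancellation from $L(1-s)$: since $c<2\epsilon/L$ gives $s>\sqrt{1-4\epsilon^2/L^2}$, the elementary inequality $1-\sqrt{1-x}\le x$ yields $L(1-s)<4\epsilon^2/L<2\epsilon$, which rescues the linear term. Tracking the interaction of the three breakpoints (whose ordering depends on whether $2\epsilon/L$ is above or below $1/\sqrt2$) and verifying that the constant $8$ survives in the tightest window — $L$ only slightly larger than $2\epsilon$, where $\sqrt{L\epsilon}$ and $\epsilon$ are comparable and all the crude subadditivity bounds pile up — is where the care is needed, and is presumably why the authors flag the lemma as tedious.
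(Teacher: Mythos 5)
Your proposal is correct, and it takes a genuinely different route from the paper. The paper's proof is calculus-based: after the same symmetry reduction to $[0,\pi/2]$, it computes $f'(\theta)$ almost everywhere (with indicator functions at the kinks), shows $f$ is weakly decreasing where $L\sin\theta>2\epsilon$, increasing where $\epsilon<L\sin\theta<2\epsilon$, and satisfies $|f'|\leq 3L$ where $L\sin\theta<\epsilon$; this localises the maximum either to the band $L\sin\theta\leq\epsilon$ (bounded by $f(0)$ plus $3L\cdot\sin^{-1}(\epsilon/L)\leq 3\pi\epsilon/2$) or to the single point $\theta=\sin^{-1}(2\epsilon/L)$, where $f$ is evaluated directly and bounded by $5\sqrt{L\epsilon}$. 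Your approach replaces all of this with algebra: the identities $(L+\epsilon)^2-L^2c^2=L^2s^2+2L\epsilon+\epsilon^2$ and $(L+2\epsilon)^2-(Lc-2\epsilon)^2=L^2s^2+4L\epsilon(1+c)$, subadditivity of the square root to peel off $Ls$ from each radical so that it cancels against $-2(Ls-2\epsilon)$, and the inequality $1-\sqrt{1-x}\leq x$ to supply the cancellation $L(1-s)<2\epsilon$ in the regime $Lc<2\epsilon$ where the first radical is truncated at $L+2\epsilon$. I have checked the constants in every regime of your case decomposition (including the windows $2\epsilon<L<\sqrt{5}\epsilon$ and $2\epsilon<L<2\sqrt{2}\epsilon$ where both maxima can be simultaneously truncated): after homogenising with $\epsilon\leq\sqrt{L\epsilon}/\sqrt{2}$, the worst constant that appears is $3\sqrt{2}+5/\sqrt{2}\approx 7.78<8$, so the bound survives everywhere. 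What each approach buys: yours avoids differentiation entirely, each regime bound is a mechanical application of elementary inequalities, and your handling of the case $L\leq 2\epsilon$ is both shorter and sharper (you get $8\epsilon$ where the paper's route lands at $(3+\sqrt{5}+3\pi/2)\epsilon\approx 9.95\epsilon$); the paper's monotonicity argument, by contrast, reduces the whole problem to evaluating $f$ at essentially two candidate locations, so it involves fewer cases, at the price of a derivative computation with indicator terms that is easy to get wrong.
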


\begin{proof}
Thanks to the symmetries of the sine and cosine functions and the absolute values present in the formula for $f$, we have that $f(-\theta) = f(\theta)$ and $f(\pi/2 - \theta) = f(\theta)$. So $f(\theta) = f(\pi/2 + \theta)$. Therefore it suffices to bound $f$ on the interval $[0,\pi/2]$. On this interval, we can remove the absolute values in the formula for $f$, giving 
\begin{equation*}
    \begin{split}
        f(\theta) &= \sqrt{(L+2\epsilon)^2-\max(0,L\cos\theta-2\epsilon)^2} + \sqrt{(L+\epsilon)^2-L^2\cos^2\theta}\\ 
        &- 2\max(0,L\sin\theta-2\epsilon) + \max(0,\epsilon-L\sin\theta).
    \end{split}
\end{equation*}
With the exception of finitely many values of $\theta$, the derivative of $f$ exists and is equal to
\begin{equation*}
    \begin{split}
        \mathbb{I}(L\cos\theta > 2\epsilon)\frac{L\sin\theta(L\cos\theta-2\epsilon)}{\sqrt{(L+2\epsilon)^2-(L\cos\theta-2\epsilon)^2}} &+ \frac{L^2\sin\theta\cos\theta}{\sqrt{(L+\epsilon)^2-L^2\cos^2\theta}}\\ - \mathbb{I}(L\sin\theta>2\epsilon)2L\cos(\theta) - \mathbb{I}(L\sin\theta&<\epsilon)L\cos\theta,
    \end{split}
\end{equation*}
where $\mathbb{I}$ denotes the indicator function.

Notice that
\begin{equation*}
    \frac{L^2\sin\theta\cos\theta}{\sqrt{(L+\epsilon)^2-L^2\cos^2\theta}} \leq \frac{L^2\sin\theta\cos\theta}{\sqrt{L^2-L^2\cos^2\theta}} = L\cos\theta,
\end{equation*}
and similarly
\begin{equation*}
    \frac{L\sin\theta(L\cos\theta-2\epsilon)}{\sqrt{(L+2\epsilon)^2-(L\cos\theta-2\epsilon)^2}} \leq \frac{L^2\sin\theta\cos\theta}{\sqrt{L^2-L^2\cos^2\theta}} = L\cos\theta.
\end{equation*}
Using these identities, we get that
\begin{equation*}
    f'(\theta) \leq \mathbb{I}(L\cos\theta>2\epsilon)L\cos\theta + L\cos\theta - \mathbb{I}(L\sin\theta>2\epsilon)2L\cos(\theta) - \mathbb{I}(L\sin\theta<\epsilon)L\cos\theta.
\end{equation*}
Hence $f$ is weakly decreasing whenever $L\sin\theta>2\epsilon$. When $\epsilon < L\sin\theta<2\epsilon$, every non-zero term in $f'$ is positive, and so $f$ is increasing. We now bound $f'$ in absolute value when $L\sin\theta< \epsilon$. In this case,
\begin{equation*}
    \begin{split}
        |f'(\theta)| &\leq \mathbb{I}(L\cos\theta > 2\epsilon)\frac{L\sin\theta(L\cos\theta-2\epsilon)}{\sqrt{(L+2\epsilon)^2-(L\cos\theta-2\epsilon)^2}} + \frac{L^2\sin\theta\cos\theta}{\sqrt{(L+\epsilon)^2-L^2\cos^2\theta}} + L\cos\theta \\
        & \leq 3L\cos\theta \\
        &\leq 3L,
    \end{split}
\end{equation*}
using our approximations for the first and second terms from earlier.

Further, if $L > 2\epsilon$,
\begin{equation*}
    \begin{split}
    f(0) &= \sqrt{(L+2\epsilon)^2-\max(0,L-2\epsilon)^2} + \sqrt{(L+\epsilon)^2-L^2} - \epsilon \\
    & = \sqrt{8L\epsilon} + \sqrt{2L\epsilon + \epsilon^2} - \epsilon\\
    &\leq (\sqrt{8} + \sqrt{3})\sqrt{L\epsilon} - \epsilon.
    \end{split}
\end{equation*}
Otherwise $L\leq 2\epsilon$ and,
\begin{equation*}
    \begin{split}
    f(0) &= \sqrt{(L+2\epsilon)^2-\max(0,L-2\epsilon)^2} + \sqrt{(L+\epsilon)^2-L^2} - \epsilon \\
    & = L + 2\epsilon + \sqrt{2L\epsilon + \epsilon^2} - \epsilon \\
    & = L + \epsilon + \sqrt{2L\epsilon + \epsilon^2}\\
    & \leq (3+\sqrt{5}) \epsilon.
    \end{split}
\end{equation*}

Hence, we can bound $f(\theta)$ for $\theta$ on the interval $[0, \sin^{-1}(\epsilon/L)]$ (or $[0,\pi/2]$ if $\epsilon > L$) by using our upper bounds for $f(0)$ and $|f'(\theta)|$ on this interval. By additionally using the inequality $\sin^{-1}(x)\leq \pi x/2$ for positive $x$, we obtain that when $L\sin\theta \leq \epsilon$,
\begin{equation*}
    \begin{split}
        f(\theta) &\leq \begin{cases} 
            (\sqrt{8} + \sqrt{3})\sqrt{L\epsilon} + (3\pi/2-1)\epsilon &  L >   2\epsilon \\
            (3+\sqrt{5} + 3\pi/2) \epsilon & \epsilon \leq L \leq 2\epsilon \\
            (3+\sqrt{5}) \epsilon + (3\pi/2)L & L < \epsilon 
        \end{cases}\\
        &\leq \begin{cases}
            (\sqrt{8} + \sqrt{3})\sqrt{L\epsilon} + (3\pi/2-1)\epsilon &  L >  2\epsilon \\
            (3+\sqrt{5} + 3\pi/2) \epsilon & L \leq 2\epsilon 
        \end{cases}\\
        & \leq \begin{cases}
            (\sqrt{8} + \sqrt{3} + (3\pi/4-1/2)\sqrt{2})\sqrt{L\epsilon} &  L > 2\epsilon \\
            (3+\sqrt{5} + 3\pi/2) \epsilon & L \leq 2\epsilon 
        \end{cases}\\
        & \leq \begin{cases}
            8\sqrt{L\epsilon} &  L > 2\epsilon \\
            10 \epsilon & L \leq 2\epsilon.
        \end{cases}
    \end{split}
\end{equation*}
Otherwise, we know $f$ is weakly increasing until $L\sin\theta > 2\epsilon$, after which point it is weakly decreasing. Hence, if $f$ is not maximised where $L\sin\theta \leq \epsilon$, it must attain its maximum when $L\sin\theta = 2\epsilon$, or equivalently $\theta = \sin^{-1}(2\epsilon/L)$. Note that this implies $2\epsilon \leq L$. We compute
\begin{equation*}
    \begin{split}
        f(\sin^{-1}(2\epsilon/L)) &= \sqrt{(L+2\epsilon)^2-\max(0,\sqrt{L^2-4\epsilon^2}-2\epsilon)^2} + \sqrt{(L+\epsilon)^2-L^2-4\epsilon^2}\\
        & = \sqrt{(L+2\epsilon)^2-\max(0,\sqrt{L^2-4\epsilon^2}-2\epsilon)^2} + \sqrt{2L\epsilon - 3\epsilon^2}\\
        & =\begin{cases}
            \sqrt{4L\epsilon + 4\epsilon^2 + 2\epsilon\sqrt{L^2-4\epsilon^2}} + \sqrt{2L\epsilon - 3\epsilon^2}  &  L > 2\sqrt{2}\epsilon \\
            L + 2\epsilon + \sqrt{2L\epsilon - 3\epsilon^2} & 2\epsilon \leq L \leq 2\sqrt{2}\epsilon 
        \end{cases}\\
        & \leq\begin{cases}
            \sqrt{6L\epsilon + 4\epsilon^2} + \sqrt{2L\epsilon}  &  L > 2\sqrt{2}\epsilon \\
            (2 + 2\sqrt{2})\epsilon + \sqrt{2L\epsilon} & 2\epsilon \leq L \leq 2\sqrt{2}\epsilon 
        \end{cases}\\
        & \leq\begin{cases}
            (\sqrt{6+\sqrt{2}} +\sqrt{2})\sqrt{L\epsilon}  &  L > 2\sqrt{2}\epsilon \\
            (2 + 2\sqrt{2})\sqrt{L\epsilon} & 2\epsilon \leq L \leq 2\sqrt{2}\epsilon 
        \end{cases}\\
        &\leq 5\sqrt{L\epsilon}.
    \end{split}
\end{equation*}
Hence, to totally bound $f(\theta)$ on the interval $[0,\pi/2]$ we need only use our earlier bound, namely
\begin{equation*}
    f(\theta) \leq \begin{cases}
            8\sqrt{L\epsilon} &  L > 2\epsilon \\
            10 \epsilon & L \leq 2\epsilon.
        \end{cases}
\end{equation*}
\end{proof}

The goal of the following proposition is to bound from below the chord length of a short segment of a curve given that it has bounded curvature.

\begin{proposition}
Suppose $\gamma:[0,L]\to\mathbb{R}^d$ is a twice differentiable curve parametrised by arc length with curvature $\kappa$ bounded in norm by $M$. Let $0<\epsilon < \pi/M$. Then for any $t\in [0,L - \epsilon]$,
\begin{equation*}
    \epsilon \geq \|\gamma(t+\epsilon)-\gamma(t)\|_2\geq \frac{2}{M}\sin\left(\frac{M}{2}\epsilon\right).
\end{equation*}
In particular,
\begin{equation*}
\|\gamma(t+\epsilon)-\gamma(t)\|_2\geq \epsilon - \frac{M^2}{24}\epsilon^3.
\end{equation*}
\label{prop:curvbound}
\end{proposition}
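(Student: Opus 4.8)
The plan is to treat the three claims in turn, first reducing to the segment $[0,\epsilon]$ by replacing $\gamma$ with $s\mapsto\gamma(t+s)$, which is again an arc-length parametrised curve with curvature bounded by $M$; I may therefore assume $t=0$. The upper bound is immediate: since $\|\gamma'\|_2\equiv 1$, a chord never exceeds the arc it subtends, so
$$\|\gamma(\epsilon)-\gamma(0)\|_2=\Big\|\int_0^\epsilon\gamma'(s)\,\mathrm{d}s\Big\|_2\leq\int_0^\epsilon\|\gamma'(s)\|_2\,\mathrm{d}s=\epsilon.$$

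For the lower bound, write $T(s):=\gamma'(s)$ for the unit tangent, so that $T$ traces a path on $S^{d-1}$ with speed $\|T'(s)\|_2=\|\gamma''(s)\|_2=\kappa(s)\leq M$. The decisive idea is to project the chord not onto the initial tangent $T(0)$ — which yields only the weaker estimate $\tfrac{1}{M}\sin(M\epsilon)$ — but onto the \emph{midpoint} tangent $T(\epsilon/2)$. Letting $\psi(s)$ denote the angle between $T(s)$ and $T(\epsilon/2)$, so that $\cos\psi(s)=\langle T(s),T(\epsilon/2)\rangle$, I would bound $\psi(s)$ by the length of the spherical path $T$ between parameters $\epsilon/2$ and $s$, giving $\psi(s)\leq M|s-\epsilon/2|\leq M\epsilon/2<\pi/2$ by the hypothesis $\epsilon<\pi/M$. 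Since all these angles lie in $[0,\pi/2]$, where cosine is decreasing, Cauchy--Schwarz then gives
$$\|\gamma(\epsilon)-\gamma(0)\|_2\geq\big\langle\gamma(\epsilon)-\gamma(0),\,T(\epsilon/2)\big\rangle=\int_0^\epsilon\cos\psi(s)\,\mathrm{d}s\geq\int_0^\epsilon\cos\!\big(M|s-\tfrac{\epsilon}{2}|\big)\,\mathrm{d}s=\frac{2}{M}\sin\!\Big(\frac{M}{2}\epsilon\Big).$$

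I expect the main obstacle to be justifying the angular estimate $\psi(s)\leq M|s-\epsilon/2|$ cleanly. This rests on two points: identifying $\|T'\|_2$ with the curvature $\kappa$ (immediate from arc-length parametrisation, since then $\langle T,T'\rangle=0$ and $\|\gamma''\|_2=\kappa$), and the standard Riemannian fact that the angle between two unit vectors — the geodesic distance on $S^{d-1}$ — is dominated by the length $\int\|T'\|_2\,\mathrm{d}s$ of any path joining them on the sphere. Choosing the midpoint as the projection direction is exactly what produces the sharp constant, equality being attained by the planar circular arc of radius $1/M$; this is the crux of the argument.

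Finally, the \emph{in particular} statement is a routine Taylor estimate. Writing $x=M\epsilon/2\geq0$ and using $\sin x\geq x-x^3/6$ — which holds because $g(x):=\sin x-x+x^3/6$ satisfies $g(0)=g'(0)=0$ and $g''(x)=x-\sin x\geq0$ for $x\geq0$ — I obtain
$$\frac{2}{M}\sin\!\Big(\frac{M}{2}\epsilon\Big)\geq\frac{2}{M}\Big(\frac{M\epsilon}{2}-\frac{1}{6}\Big(\frac{M\epsilon}{2}\Big)^3\Big)=\epsilon-\frac{M^2}{24}\epsilon^3,$$
completing the proof.
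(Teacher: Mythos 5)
Your proof is correct, but it takes a genuinely different route from the paper's. The paper frames the lower bound as an optimisation problem (minimise the chord length subject to $\|\alpha'\|_2=1$, $\|\alpha''\|_2\leq M$) and resolves it by invoking Schwarz's comparison theorem, cited from Chern: any competitor curve shorter than $\pi/M$ must be at least as long as the shorter arc of a radius-$1/M$ circle through its endpoints, whence the circular arc of curvature $M$ is extremal and its chord $\frac{2}{M}\sin\left(\frac{M}{2}\epsilon\right)$ is the minimum. You instead give a direct, self-contained estimate: project the chord onto the midpoint tangent $T(\epsilon/2)$, bound the angle $\psi(s)$ between $T(s)$ and $T(\epsilon/2)$ by the spherical path length $M|s-\epsilon/2|$, and integrate $\cos\psi$. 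Your observation that the \emph{midpoint} (rather than initial) tangent is the right projection direction is exactly what recovers the same sharp constant that the extremal circle produces in the paper, and your approach has the advantage of requiring no external comparison theorem (Schwarz's theorem is a nontrivial citation, and its application to curves in $\mathbb{R}^d$ rather than the plane is a point the paper passes over quickly). The only step you should make fully rigorous is the angular estimate: since $\gamma$ is only assumed twice differentiable, $T'=\gamma''$ exists pointwise and is bounded by $M$, so $T$ is Lipschitz and absolutely continuous on $S^{d-1}$, which is what licenses bounding the geodesic distance (the angle) by $\int\|T'\|_2\,\mathrm{d}s\leq M|s-\epsilon/2|$; this is a minor point, and the paper's own argument glosses over comparable regularity issues. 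For the final Taylor estimate the routes also differ slightly: the paper uses an integral-remainder identity valid on $0<x\leq\pi/2$ (stated with a small typo in the remainder exponent), whereas your monotonicity argument for $g(x)=\sin x-x+x^3/6$ holds for all $x\geq 0$ and is cleaner.
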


To prove this we make use of the following theorem of Schwarz, which we cite from \cite{chern1967curves}:
\begin{theorem}[Schwarz]
Let $C$ be an arc joining two given points $A$ and $B$ with curvature $\kappa(s) \leq 1/R$, such that $R\geq \frac{1}{2}\delta$, where $\delta$ is the distance between $A$ and $B$. Let $S$ be a circle of radius $R$ through $A$ and $B$. Then the length of $C$ is either less than, or equal to, the shorter arc $AB$ or greater than, or equal to, the longer arc $AB$ on $S$.
\end{theorem}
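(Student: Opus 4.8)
The plan is to parametrise $C$ by arc length as a unit-speed curve $\gamma:[0,\ell]\to\mathbb{R}^d$ with $\gamma(0)=A$ and $\gamma(\ell)=B$, where $\ell$ denotes the length of $C$, and to pass to the tangent indicatrix $T=\gamma'$. This traces a curve on the unit sphere $S^{d-1}$ with speed $\|T'\|=\kappa\le 1/R$, and since $\gamma(\ell)-\gamma(0)=\int_0^\ell T\,\mathrm{d}s$ the chord length is $\delta=\big\|\int_0^\ell T\,\mathrm{d}s\big\|$. The first step is a trigonometric reduction. Writing $\delta=2R\sin\alpha$ with $\alpha\in(0,\pi/2]$ (well defined because $R\ge\tfrac12\delta$), the shorter and longer arcs of $S$ have lengths $2R\alpha$ and $2R(\pi-\alpha)$. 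If $\ell\ge 2\pi R$ then $\ell>2R(\pi-\alpha)$ and the conclusion is immediate, so assume $\ell<2\pi R$ and set $\beta=\ell/(2R)\in(0,\pi)$. Using monotonicity of $\sin$ on $[0,\pi/2]$ together with $\sin\beta=\sin(\pi-\beta)$, one checks that the single inequality $\delta\ge 2R\sin(\ell/(2R))$ already implies the desired disjunction ``$\ell\le 2R\alpha$ or $\ell\ge 2R(\pi-\alpha)$''. Thus it suffices to prove $\delta\ge 2R\sin(\ell/(2R))$ for $\ell<2\pi R$.

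The case $\ell\le\pi R$ admits a clean self-contained proof via the indicatrix. Expanding the squared chord,
\[
\delta^2=\Big\|\int_0^\ell T\,\mathrm{d}s\Big\|^2=\int_0^\ell\!\!\int_0^\ell \cos\angle\!\big(T(s),T(t)\big)\,\mathrm{d}s\,\mathrm{d}t,
\]
and since $T$ has speed at most $1/R$ the angle obeys $\angle(T(s),T(t))\le|s-t|/R$ (the geodesic distance on $S^{d-1}$ is bounded by the length of the indicatrix between the two parameters). When $\ell\le\pi R$ both $\angle(T(s),T(t))$ and $|s-t|/R$ lie in $[0,\pi]$, where $\cos$ is decreasing, so $\cos\angle(T(s),T(t))\ge\cos(|s-t|/R)$. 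A direct evaluation gives
\[
\int_0^\ell\!\!\int_0^\ell\cos\!\big((s-t)/R\big)\,\mathrm{d}s\,\mathrm{d}t=\big(R\sin(\ell/R)\big)^2+\big(R(1-\cos(\ell/R))\big)^2=4R^2\sin^2(\ell/(2R)),
\]
whence $\delta\ge 2R\sin(\ell/(2R))$. This value is attained exactly by the circular arc of radius $R$, whose indicatrix is a great circle traversed at constant speed $1/R$, confirming sharpness.

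The main obstacle is the long, reflex range $\pi R<\ell<2\pi R$, where the previous argument breaks down: once $|s-t|>\pi R$ the pointwise bound $\cos\angle\ge\cos(|s-t|/R)$ fails, because $\cos$ is no longer monotone. Geometrically, the indicatrix now sweeps more than a hemisphere and $T$ points ``backwards'' near the endpoints, so any proof must exploit global cancellation rather than a term-by-term comparison. My plan is to establish the inequality in its integrated form, namely that the uniformly traversed great-circle indicatrix minimises $\big\|\int_0^\ell T\,\mathrm{d}s\big\|$ among all curves on $S^{d-1}$ of speed at most $1/R$, its minimum value being $2R\sin(\ell/(2R))$. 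This is an optimal-control problem, which I would attack by a first-variation (bang-bang) argument forcing the minimiser to satisfy $\kappa\equiv 1/R$ and to lie on a single great circle, or alternatively by a symmetrisation of $T$ about the mean direction $\int_0^\ell T\,\mathrm{d}s/\delta$. A more geometric alternative is to split the reference reflex arc at its far point into two sub-arcs of total turning less than $\pi$ and apply the $\ell\le\pi R$ comparison to matching sub-arcs of $\gamma$; there the delicate step is locating the splitting parameter of $\gamma$ that corresponds to the far point of the reference. I also expect to dispose of the boundary and degenerate configurations ($\delta=0$, or $\int_0^\ell T\,\mathrm{d}s=0$, which can only occur as $\ell\to 2\pi R$) separately, as they fall outside the regime in which a circle of radius $R$ through $A$ and $B$ is defined.
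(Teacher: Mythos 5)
Your reduction is the right one, and half of it is sound: the observation that the dichotomy follows from the single chord inequality $\delta \geq 2R\sin(\ell/(2R))$ (via monotonicity of $\sin$ on $[0,\pi/2]$ and the symmetry $\sin\beta=\sin(\pi-\beta)$) is correct, and your tangent-indicatrix proof of that inequality for $\ell\leq \pi R$ is complete — the double-integral identity and the evaluation $4R^2\sin^2(\ell/(2R))$ both check out. (Worth noting: the paper does not prove this theorem at all; it quotes it from Chern as a black box, so there is no internal proof to compare against. Also, your completed short range $\ell\leq\pi R$ is already exactly the conclusion of Proposition \ref{prop:curvbound}, which is the paper's only use of Schwarz's theorem.) But as a proof of the stated theorem your proposal has a genuine gap: the reflex range $\pi R<\ell<2\pi R$ — precisely the range responsible for the interesting ``longer arc'' branch of the dichotomy — is left as a list of unexecuted strategies. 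None of the three sketches is a proof, and the most concrete one, splitting the reflex arc at its far point and comparing matching sub-arcs, fails structurally: chord lower bounds for sub-arcs cannot be assembled into a chord lower bound for the whole arc, since the triangle inequality points the wrong way (the full circle has two sub-chords of length $2R$ and total chord $0$). Any splitting proof must track directions, not just chord lengths.

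The gap admits a one-line repair inside your own framework: instead of squaring the chord and comparing all pairs $(s,t)$, project the chord onto the single unit vector $u:=T(\ell/2)$, the tangent at the arc-length midpoint. Then $\angle\big(T(s),u\big)\leq |s-\ell/2|/R\leq \ell/(2R)\leq\pi$ for every $s$ whenever $\ell\leq 2\pi R$, so the monotonicity of cosine on $[0,\pi]$ applies pointwise throughout the reflex range, and
\[
\delta \;=\; \Big\|\int_0^\ell T(s)\,\mathrm{d}s\Big\| \;\geq\; \Big\langle \int_0^\ell T(s)\,\mathrm{d}s,\,u\Big\rangle \;\geq\; \int_0^\ell \cos\!\big((s-\ell/2)/R\big)\,\mathrm{d}s \;=\; 2R\sin\!\big(\ell/(2R)\big),
\]
valid for all $\ell\leq 2\pi R$ in one stroke. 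Combined with your trivial case $\ell\geq 2\pi R$ and your $\sin$-monotonicity dichotomy, this completes the theorem; it also disposes of the degenerate case $\delta=0$ that worried you (the displayed bound forces $\ell\geq 2\pi R$ there, with no separate argument needed). This midpoint-projection step is exactly the classical Schur comparison argument specialised to circular comparison arcs — i.e., the proof in Chern's exposition, which is the source the paper cites for the theorem — so the fix replaces your proposed optimal-control and symmetrisation detours, which would amount to reproving Schur's theorem from scratch.
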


\begin{proof}[Proof of Proposition \ref{prop:curvbound}]
The first inequality is clear since $\gamma$ is parameterised by arc length. Now fix $t$. For the second inequality, consider the optimisation problem of minimising $\|\alpha(t+\epsilon)-\alpha(t)\|_2$ subject to the constraints that $\|\alpha'\|_2 = 1$ and $\|\alpha''\|_2 \leq M$. Consider an arc of length $\epsilon$ on the circle of curvature $M$, which has radius $1/M$. Elementary geometry shows that the distance between the endpoints of such an arc is $\frac{2}{M}\sin(M\epsilon/2)$. We claim that this arc provides an optimal solution. Indeed, let $\gamma$ be any curve that performs at least as well as this arc in the sense that
\begin{equation*}
\|\gamma(t+\epsilon)-\gamma(t)\|_2 \leq \frac{2}{M}\sin\Big{(}\frac{M}{2}\epsilon\Big{)},
\end{equation*}
while $\|\gamma'\|_2 = 1$, $\|\gamma''\|_2 \leq M$.

Let $S$ be a circle of radius $1/M$ crossing both $\gamma(t)$ and $\gamma(t+\epsilon)$. Such a circle must exist since $\|\gamma(t+\epsilon)-\gamma(t)\|_2\leq 2/M$. The curve $\gamma$ on the interval $[t,t+\epsilon]$ is of length $\epsilon < \pi/M$ while the longer arc on $S$ connecting $\gamma(t)$ and $\gamma(t+\epsilon)$ has length greater than $\pi/M$. Hence, by the theorem of Schwarz, $\epsilon$ is less than or equal to the length of the shorter arc on $S$ from $\gamma(t)$ to $\gamma(t+\epsilon)$. If this inequality is strict, we may take a shorter portion of the circular arc with length $\epsilon$, which has a shorter distance between endpoints. This proves that an arc of length $\epsilon$ on a circle of radius $1/M$ is an optimal solution of the optimisation problem.

Thus for potentially suboptimal $\gamma$, we have
\begin{equation*}
\|\gamma(t+\epsilon)-\gamma(t)\|_2\geq \frac{2}{M}\sin\left(\frac{M}{2}\epsilon\right).
\end{equation*}
For the last statement of the proposition, by the Lagrange remainder theorem
\begin{equation*}
    \sin x -x + x^3/6 = \int_0^x \cos t \frac{(x-t)^5}{5!}\;\mathrm{d}t.
\end{equation*}
The right side is clearly positive provided that $0<x\leq \pi/2$. Since $0<\frac{M}{2}\epsilon < \pi/2$, we have
\begin{equation*}
    \frac{2}{M}\sin\left(\frac{M}{2}\epsilon\right) \geq \frac{2}{M}\left[\frac{M}{2}\epsilon - \frac{M^3}{48}\epsilon^3\right] = \epsilon - \frac{M^2}{24}\epsilon^3.
\end{equation*}
\end{proof}

We now use the results we have already proven about curves that are approximately straight to obtain a stability result for the Euler characteristic transform of more general shapes. To do this, we prove a lemma that allows us to glue together Euler characteristic transforms of functions restricted to different regions of a domain.

\begin{definition}
Let $V^* = (V, V_0,\{\Phi_\lambda\}_{\lambda\in\Lambda_V})$ and $W^* = (W, W_0,\{\Phi_\lambda\}_{\lambda\in\Lambda_W})$ be finite one-dimensional CW complexes, each with a fixed CW structure. Suppose there exist maps $f_V\in\mathcal{F}^r(V^*,d)$ and $f_W\in\mathcal{F}^r(W^*,d)$, a subset $S\subseteq V_0$ and an injective map $m:S \to W_0$ such that $f_V = f_W \circ m$ on $S$. We define the glue of $V^*$ and $W^*$ under $m$ to be a finite complex with structure:
\begin{equation*}
Z^* = (Z, Z_0, \{\Phi_\lambda\}_{\lambda\in\Lambda_Z}) : = \big((V \sqcup W)/m,\;\; (V_0 \sqcup W_0)/m,\;\; \{\Phi_\lambda\}_{\lambda\in\Lambda_V\sqcup\Lambda_W}\big).
\end{equation*}
We define the glue of $f_V$ and $f_W$ under $m$ to be the map $f_Z:Z \to \mathbb{R}^d$ which restricts to $f_V$ on $V$ and $f_W$ of $W$. This map is well defined (since $f_V = f_W \circ m$ on $S$) and is an element of $\funcspacer$.
\end{definition}

\begin{lemma}
Using the notation of the previous definition, suppose $\ECT_{f_V}(v,t)$ and $\ECT_{f_W}(v,t)$ are defined for almost all $t$ for any fixed $v$. Then
\begin{equation}
    \ECT_{f_Z}(v,t) =  \ECT_{f_V}(v,t) +  \ECT_{f_W}(v,t) -  \ECT_{f_S}(v,t)
    \label{eqn:inc-exc}
\end{equation}
for almost all $t$ when $v$ is fixed, where $f_S$ is the restriction of $f_V$ to $S$.
\label{lem:glue}
\end{lemma}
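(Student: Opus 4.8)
The plan is to prove the identity pointwise in $t$ (for fixed $v$), by reducing it to the inclusion--exclusion property of the Euler characteristic under a Mayer--Vietoris decomposition of the sublevel set $f_Z^{-1}(H_{v,t})$, where $H_{v,t} := \{x \in \mathbb{R}^d : \langle x,v\rangle \le t\}$. Write $q_V : V \to Z$ and $q_W : W \to Z$ for the canonical maps into the glue. Since the quotient by $m$ only identifies points of $S\subseteq V_0$ with their images in $W_0$ and makes no identifications internal to $V$ or to $W$, each of $q_V,q_W$ is an embedding. Setting $A := f_V^{-1}(H_{v,t})$, $B := f_W^{-1}(H_{v,t})$ and $\tilde A := q_V(A)$, $\tilde B := q_W(B)$, the first step is the bookkeeping claim that $f_Z^{-1}(H_{v,t}) = \tilde A \cup \tilde B$ (immediate from the fact that $f_Z$ restricts to $f_V$ and $f_W$ and that $Z = q_V(V)\cup q_W(W)$), together with $\tilde A \cap \tilde B = q_V\big(f_S^{-1}(H_{v,t})\big)$. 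The hypothesis $f_V = f_W\circ m$ on $S$ enters precisely here: a point lying in both $\tilde A$ and $\tilde B$ must be the class of a glued vertex $s\in S$, and then $s\in A$ holds exactly when $\langle f_V(s),v\rangle\le t$, which by $f_V(s)=f_W(m(s))$ is the same condition as $m(s)\in B$. Hence $\tilde A\cap\tilde B$ is a finite discrete set and $\chi(\tilde A\cap\tilde B) = |f_S^{-1}(H_{v,t})| = \ECT_{f_S}(v,t)$.

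Second, I would restrict attention to the values of $t$ that are generic in two senses. The first is that $\ECT_{f_V}(v,t)$ and $\ECT_{f_W}(v,t)$ are defined, which holds for almost every $t$ by hypothesis; since $q_V,q_W$ embed, this gives $\chi(\tilde A)=\chi(A)=\ECT_{f_V}(v,t)$ and $\chi(\tilde B)=\ECT_{f_W}(v,t)$ finite. The second is that $t$ avoids the finite set $\{\langle f_V(s),v\rangle : s\in S\}$ of heights of the gluing $0$-cells; discarding these finitely many values costs nothing almost everywhere, and it guarantees that every glued point contributing to $\tilde A\cap\tilde B$ satisfies $\langle f_V(s),v\rangle < t$ strictly. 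Consequently each such point lies in the topological interior of both $\tilde A$ and $\tilde B$: near a $0$-cell mapped strictly below $t$, an entire neighbourhood maps into the open half-space, so locally $\tilde A\cup\tilde B$ is a wedge of open arcs and $\tilde A,\tilde B$ contain their respective arcs up to the shared vertex.

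Third, with the intersection now a finite set of interior points, I would invoke Mayer--Vietoris. Because $\tilde A,\tilde B$ are closed rather than open, I would first thicken: let $N$ be a small neighbourhood of $\tilde A\cap\tilde B$ in $\tilde A\cup\tilde B$ that deformation retracts onto it, and set $\tilde A^+ := \tilde A\cup N$, $\tilde B^+ := \tilde B\cup N$. Then $\{\tilde A^+,\tilde B^+\}$ is an open cover of $\tilde A\cup\tilde B$ with $\tilde A^+\simeq\tilde A$, $\tilde B^+\simeq\tilde B$, and $\tilde A^+\cap\tilde B^+ = N\simeq \tilde A\cap\tilde B$. The Mayer--Vietoris sequence of this cover, combined with finite generation of all the homologies involved and the additivity of Euler characteristic along exact sequences, yields
\[
\chi(\tilde A\cup\tilde B) = \chi(\tilde A) + \chi(\tilde B) - \chi(\tilde A\cap\tilde B).
\]
Translating each term through the identifications of the first step gives exactly Equation~(\ref{eqn:inc-exc}) for almost every $t$.

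I expect the genuine obstacle to be this third step: justifying inclusion--exclusion for the \emph{closed}, non-open pieces $\tilde A,\tilde B$ without imposing tameness. The reduction to almost every $t$ is what makes this tractable, since it forces the level set to miss the gluing vertices (so the intersection sits in the interior and the pairs $(\tilde A,\tilde A\cap\tilde B)$ and $(\tilde B,\tilde A\cap\tilde B)$ are good, cofibred pairs) and ensures that the sublevel sets have finitely generated homology. Verifying that the thickened open cover computes the same Euler characteristics is the technical heart. A minor point to handle carefully is that $\chi$ here denotes the homotopy-invariant alternating sum of Betti numbers, so additivity must be read off from the Mayer--Vietoris long exact sequence rather than from any ambient CW structure on $Z$, which $\tilde A$ and $\tilde B$ need not respect.
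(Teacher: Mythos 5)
Your proposal is correct and follows essentially the same route as the paper: decompose the sublevel set $Z(v,t)$ into the sublevel sets coming from $V$ and $W$, note that they intersect in the finite set of glued $0$-cells, discard the finitely many levels $t$ equal to heights $\langle f_V(s),v\rangle$ of gluing points, and run Mayer--Vietoris to get the Euler characteristic inclusion--exclusion. The one place you diverge is your third step, and it is worth noting that this is an improvement rather than overhead: the paper at this point simply asserts that ``the interiors of $V(v,t)$ and $W(v,t)$ cover their intersection $S(v,t)$'' and cites the open-cover form of Mayer--Vietoris, but with interiors taken relative to $Z(v,t)$ this assertion is false --- a glued vertex mapped strictly below level $t$ has every neighbourhood in $Z(v,t)$ meeting arcs from both the $V$-side and the $W$-side, so it lies in the interior of neither closed piece. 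Your repair, replacing the closed pieces by open thickenings $\tilde A^+,\tilde B^+$ obtained by adjoining a neighbourhood $N$ of the finite intersection that deformation retracts onto it, is exactly the standard way to make the Mayer--Vietoris argument legitimate here, and since the genericity of $t$ guarantees such an $N$ exists, your version of the proof is watertight where the paper's is merely heuristic. The remaining bookkeeping (identifying $\chi(\tilde A\cap\tilde B)$ with $\ECT_{f_S}(v,t)$, and using the hypothesis $f_V=f_W\circ m$ on $S$ to match the two membership conditions) agrees with the paper's.
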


\begin{proof}
Fix a unit vector $v$ in $\mathbb{R}^d$. Let $p_1,\ldots, p_k$ be the points of $S$. We denote by $V(v,t)$ the subset of points $x$ in $V$ satisfying that $\langle v,f_V(x)\rangle \leq t$. We define $W(v,t)$ and $Z(v,t)$ analogously. We let $S(v,t)$ denote the intersection of $S$ and $V(v,t)$.

Via the inclusions of $V$ and $W$ into $Z$, we can view $Z$ as the union of $V$ and $W$, with $V$ and $W$ intersecting in $Z$ at $S$. Similarly, we can view $Z(v,t)$ as the union of $V(v,t)$ and $W(v,t)$, with these two subsets intersecting at $S(v,t)$.  For almost all $t\in\mathbb{R}$, $v\cdot f_V(p_i) \neq t$ for all $i$. Fix any such $t$. Hence, we have that the interiors of $V(v,t)$ and $W(v,t)$ cover their intersection $S(v,t)$, by continuity of $f_V$ and $f_W$. Therefore, we have a Mayer-Vietoris exact sequence of homology groups \cite[149]{hatcher2002algebraic}:
\begin{equation*}
    \ldots \to H_i\big(S(v,t)\big) \to H_i\big(V(v,t)\big) \oplus H_i\big(W(v,t)\big) \to H_i\big(Z(v,t)\big) \to \ldots.
\end{equation*}

A routine argument then deduces the identity
\begin{equation*}
    \chi\big(Z(v,t)\big) = \chi\big(V(v,t)\big) +\chi\big(W(v,t)\big) - \chi\big(S(v,t)\big),
\end{equation*}
whenever all Euler characteristics on the right-hand side are defined. This happens for almost all $t$ and is another way of writing the identity of Equation (\ref{eqn:inc-exc}).
\end{proof}

We now have the prerequisites to prove Proposition \ref{prop:funcstab}.

\begin{proof}[Proof of Proposition \ref{prop:funcstab}]
Let $\alpha_\lambda := f \circ \Phi_\lambda$ and $\beta_\lambda := g \circ \Phi_\lambda$. Since the index set $\Lambda$ is finite, we let $\Lambda = \{1,\ldots,k\}$. We define $Z^0:=Z_0$, and inductively, $Z^\lambda = Z^{\lambda-1} \cup \im \Phi_\lambda$ for $\lambda \in \Lambda$. We then let $f_\lambda = f|_{Z^\lambda}$ and $g_\lambda = g|_{Z^\lambda}$.

Inductively, we assume that 
\begin{equation*}
\big\|\ECT_{f_{\lambda - 1}}-\ECT_{g_{\lambda - 1}}\| \leq |Z_0|\epsilon + \sum_{k = 1}^{\lambda-1} G_k(\epsilon).
\end{equation*}
Indeed, as a base case, it is easily observed that
\begin{equation*}
\big\|\ECT_{f_0}-\ECT_{g_0}\big\| \leq |Z_0|\epsilon.
\end{equation*}

We can split $\alpha_\lambda$ into $n$ pieces by restricting $\alpha_{\lambda,i}:[\frac{i-1}{n},\frac{i}{n}]\to \mathbb{R}^d$. Analogously we can split $\beta_\lambda$ into curves $\beta_{\lambda,i}$. By Proposition \ref{prop:curvbound}, the arc length of each $\alpha_{\lambda,i}$ is at most $M^2L_\lambda^3/24n^3$ greater than the distance between its endpoints, if $n > L_\lambda M/\pi$. We now apply Proposition \ref{prop:localstab}. Thus, provided
\begin{equation*}
    \frac{M^2L_\lambda^3}{24n^3} \leq \epsilon, \text{ or equivalently, } n \geq \Big{(}\frac{M^2L_\lambda^3}{24\epsilon}\Big{)}^{1/3},
\end{equation*}
we observe
\begin{equation*}
    \big\|\ECT_{\alpha_{\lambda,i}}-\ECT_{\beta_{\lambda,i}}\big\| \leq \begin{cases}
            8\sqrt{L_\lambda\epsilon/n} &  L_\lambda/n > 2\epsilon \\
            10 \epsilon & L_\lambda/n \leq 2\epsilon.
        \end{cases}
\end{equation*}
Let $m_\lambda\in\{1,2\}$ be the number of 0-cells (i.e. elements of $Z_0$) in the image of $\Phi_\lambda$. By repeatedly applying Lemma \ref{lem:glue} we have that
\begin{equation*}
    \ECT_{\alpha_\lambda}(v,t) = (m_\lambda-2)\ECT_{\alpha_\lambda(0)}(v,t) +\sum_{i=1}^n\ECT_{\alpha_{\lambda,i}}(v,t) - \sum_{i=1}^{n-1} \ECT_{\alpha_{\lambda,i}(i/n)}(v,t),
\end{equation*}
for almost all $t$ when $v$ is fixed. By the same argument, a similar equality holds for $\ECT_{\beta_\lambda}$. Hence, by the triangle inequality, we deduce that $\|\ECT_{\alpha_\lambda}-\ECT_{\beta_\lambda}\|$ is bounded above by
\begin{equation*}
    \begin{split}
        (m_\lambda-2)\big\|\ECT_{\alpha_\lambda(0)} - \ECT_{\beta_\lambda(0)}\big\| + \sum_{i=1}^n &\big\|\ECT_{\alpha_{\lambda,i}} - \ECT_{\beta_{\lambda,i}}\big\| + \sum_{i=1}^{n-1}\big\|\ECT_{\alpha_{\lambda,i}(i/n)} -\ECT_{\beta_{\lambda,i}(i/n)}\big\|\\
        &\leq \begin{cases}
                8\sqrt{L_\lambda n\epsilon} + (n + m_\lambda -3)\epsilon &  L_\lambda/n > 2\epsilon \\
                (11n +m_\lambda - 3)\epsilon & L_\lambda/n \leq 2\epsilon.
            \end{cases}
    \end{split}
\end{equation*}
In particular, this bound hold when we let 
\begin{equation*}
    n = n_\lambda := \max\left(\left\lceil \left(\frac{M^2L_\lambda^3}{24\epsilon}\right)^{1/3} \right\rceil,\left\lceil \frac{L_\lambda M}{\pi}\right\rceil\right).
\end{equation*}

Applying Lemma \ref{lem:glue} again, we have
\begin{equation*}
\ECT_{f_\lambda}(v,t) = \ECT_{f_{\lambda-1}}(v,t) + \ECT_{\alpha_\lambda}(v,t) - \ECT_{\alpha_\lambda(0)}(v,t) - (m_\lambda- 1)\ECT_{\alpha_\lambda(1)}(v,t),
\end{equation*}
for almost all $t$ when $v$ is fixed. Here, $\delta_{\lambda,0}$ is 1 if $\Phi_\lambda(0)\in S$ and zero otherwise. The value $\delta_{\lambda,1}$ is defined analogously for $\Phi_\lambda(1)$. Similarly, such an equation holds involving $g_\lambda$, $g_{\lambda-1}$, and $\beta_\lambda$.

Applying the triangle inequality as before, along with our bound for $\|\ECT_{\alpha_\lambda} - \ECT_{\beta_\lambda}\|$, we deduce
\begin{equation*}
    \big\|\ECT_{f_\lambda} - \ECT_{g_\lambda}\big\| \leq \big\|\ECT_{f_{\lambda-1}} - \ECT_{g_{\lambda-1}}\big\| + \big\|\ECT_{\alpha_\lambda} - \ECT_{\beta_\lambda}\big\| + (2-m_\lambda)\epsilon.
\end{equation*}
The last two terms sum to $G_\lambda(\epsilon) - \epsilon$, so in particular we have the bound
\begin{equation*}
    \big\|\ECT_{f_\lambda} - \ECT_{g_\lambda}\big\| \leq |Z_0|\epsilon + \sum_{k=1}^\lambda G_k(\epsilon).
\end{equation*}
Induction then proves the proposition.
\end{proof}

From Proposition \ref{prop:funcstab}, Theorem \ref{thm:ECTstab} follows easily.

\begin{proof}[Proof of Theorem \ref{thm:ECTstab}]
Fix some $X,Y\in\imspacer$ and suppose $d_{Z^*}(X,Y) < \epsilon$. Hence, we may choose $h_X,h_Y\in\homspacer$ with the properties given in Definition \ref{def:distdef}. Suppose that $X$ has curvature bounded by $M$ under $Z^*$. It follows that $h_X$ also has curvature bounded by $M$. Proposition \ref{prop:funcstab} gives that
\begin{equation*}
\left\|\ECT_{h_X}-\ECT_{h_Y}\right\| \leq |Z_0|\epsilon + \sum_{\lambda\in\Lambda} G_\lambda(\epsilon),
\end{equation*}
but $\ECT_X = \ECT_{h_X}$ and $\ECT_Y = \ECT_{h_Y}$ since $h_X$ and $h_Y$ are homeomorphisms. The second statement of the theorem follows.

For the first statement, note that every $X\in \imspacer$ has a bound $M$ on its curvature and that $G_\lambda(\epsilon) \to 0$ as $\epsilon \to 0$ for all $\lambda \in \Lambda$.
\end{proof}

\subsection{Stability of Piecewise Linear Interpolation}

If we are given $X\subseteq\mathbb{R}^d$, the $C^2$-image of a homeomorphism $h$ from some one-dimensional CW complex $Z$, it may not be easy to exactly compute $\ECT_X$. The main goal of this section is to show that a dense subset of $Z$ can be used to approximate $\ECT_h = \ECT_X$. First, we make precise the kind of dense subset we need to properly estimate the $\ECT_h$.
\begin{definition}\label{def:dense}
Let $Z^* = (Z, Z_0,\{\Phi_\lambda\}_{\lambda\in\Lambda})$ be a connected finite one-dimensional CW complex with some fixed cellular decomposition and $f$ be a $C^2$ map $f:Z \to \mathbb{R}^d$. We say that $A = \{a_1,\ldots,a_n\}\subseteq Z$ is a \emph{compatible subset} of $Z^*$ if the following hold:
\begin{enumerate}
    \item $Z_0 \subseteq A$ and
    \item $A-Z_0$ contains a point in each 1-cell of $Z$.
\end{enumerate}
These requirements ensure that $Z - A$ is a union of disjoint open intervals. If additionally, the length of the image of each of  these intervals under $f$ is less than $\epsilon$, we say that $A$ is an \emph{$\epsilon$-dense subset} for $f$. An infinite subset of $Z$ is \emph{compatible and dense} for $f$ if it contains an $\epsilon$-dense subset for all positive $\epsilon$.
\end{definition}

\begin{definition}\label{def:lin_interpolation}
Let $f$ be as in the previous definition and $A=\{a_1,\ldots,a_n\}$ be a compatible subset of $Z^*$. Let $a_{ij}$ denote the line segment from $a_i$ to $a_j$. We define a multiset $E$ with elements in the set of unordered pairs in $\{1,\ldots,n\}$. $E$ contains a copy of $(i,j)$ for each open curve in $Z-A$ whose endpoints are $a_i$ and $a_j$. We define

\begin{equation*}
    \ECT^A_f(v,t) = \#\{1\leq i\leq n: \langle f(a_i),v\rangle \leq t\}- \#\{(i,j)\in E: \max(\langle f(a_i),v\rangle, \langle f(a_j), v\rangle) \leq t\}.
\end{equation*}
\end{definition}

The main theorem of the section says that we can use dense subsets to approximate the Euler characteristic transform of a one-dimensional CW complex:

\begin{theorem}
\label{thm:interpolationstable}
Let $Z^* = (Z, Z_0,\{\Phi_\lambda\}_{\lambda\in\Lambda})$ be a connected finite one-dimensional CW complex with some fixed cellular decomposition and $f$ be a $C^2$ map $f:Z \to X \subseteq \mathbb{R}^d$. Suppose that $f$ has curvature bounded by $M$ and let $A$ be an $\epsilon$-dense subset of $Z$, where $0 < \epsilon < \pi/M$.  Let $L$ be the sum of the arc lengths of the images of 1-cells of $Z$ under $f$. Then
\begin{equation*}
    \left\|\ECT_f-\ECT^A_f\right\| \leq \frac{1}{\sqrt{12}}ML\epsilon.
\end{equation*}
\end{theorem}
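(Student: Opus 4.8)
The plan is to reduce the global comparison to a sum of independent per-piece comparisons, where on each piece we compare the true curve against its chord (a straight segment), and then exploit the curvature bound to make each per-piece discrepancy quadratically small.

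First I would recognise $\ECT^A_f$ as the ECT of an honest map. Refine $Z^*$ to a CW structure whose $0$-cells are the points of $A$ and whose $1$-cells are the closures of the components of $Z-A$; since $A$ is compatible this is legitimate. Let $g:Z\to\mathbb{R}^d$ be the map agreeing with $f$ on $A$ and sending each such $1$-cell affinely (with constant velocity) onto the chord $\overline{f(a_i)f(a_j)}$. Because each restriction $g|_P$ is linear, the $v$-sublevel set of any edge is a single interval, and a short computation with the formula $\chi = c_0 - c_1$ shows $\ECT_g = \ECT^A_f$, while $\ECT_f$ is unchanged under refinement of the CW structure. Iterating Lemma \ref{lem:glue} over the pieces, both $\ECT_f$ and $\ECT_g$ expand as a sum over pieces minus identical point-mass corrections at the shared vertices $f(a_i)$ (identical because $f$ and $g$ agree on $A$). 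These corrections cancel in the difference, giving for almost every $t$ and every $v$ the identity $\ECT_f(v,t) - \ECT^A_f(v,t) = \sum_P \big(\ECT_{f|_P}(v,t) - \ECT_{g|_P}(v,t)\big)$, whence $\|\ECT_f - \ECT^A_f\| \le \sum_P \|\ECT_{f|_P} - \ECT_{g|_P}\|$ by the triangle inequality.

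The heart of the argument is the sharp per-piece bound $\|\ECT_{f|_P} - \ECT_{g|_P}\| \le \frac{M}{\sqrt{12}}\,\ell_P^2$, where $\ell_P < \epsilon$ is the arc length of the piece. I reparametrise $\gamma := f|_P$ by arc length (which leaves $\ECT_{f|_P}$ unchanged) and write $\sigma := g|_P$ for its chord, with endpoints $p,q$ and chord length $c$. Fix $v$ and set $f_v = \langle \gamma, v\rangle$. Since $\sigma$ is linear, $\ECT_\sigma(v,\cdot)$ equals $1$ above the smaller endpoint value $a' = \min(\langle p,v\rangle,\langle q,v\rangle)$ and $0$ below it; comparing region-by-region against $\ECT_\gamma$ (which is $\ge 1$ once $t\ge a$, the curve minimum, and equals $1$ past the curve maximum $b$) collapses the integral to $\int_a^b \ECT_\gamma\,dt - (b-a')$. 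Proposition \ref{prop:ectbound} gives $\int_a^b \ECT_\gamma\,dt \le V(f_v)$, and since $\gamma$ attains both endpoint values one has $b \ge \max(\langle p,v\rangle,\langle q,v\rangle)$, so $b-a' \ge |\langle q-p,v\rangle|$; hence $\int |\ECT_\gamma - \ECT_\sigma|\,dt \le V(f_v) - |\langle q-p,v\rangle|$.

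It remains to bound $\sup_v\big(V(f_v) - |\langle q-p,v\rangle|\big)$. Taking the chord direction as the first coordinate $e_1$ and writing $v = \cos\phi\,e_1 + \sin\phi\,u$ with $u\perp e_1$, subadditivity of total variation gives $V(f_v) - |\langle q-p,v\rangle| \le |\cos\phi|\,(V(\gamma_1)-c) + |\sin\phi|\,V(\gamma_u)$. Using $V(\gamma_1) \le \ell_P$ together with Lemma \ref{lem:varbound} (applied with $L_x = c$) to get $V(\gamma_u) \le \sqrt{\ell_P^2 - c^2}$, the maximum over $\phi$ is $\sqrt{(\ell_P-c)^2 + (\ell_P^2 - c^2)} = \sqrt{2\ell_P(\ell_P - c)}$. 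Finally Proposition \ref{prop:curvbound}, applicable since $\ell_P < \epsilon < \pi/M$, yields $\ell_P - c \le \frac{M^2}{24}\ell_P^3$, so $\sqrt{2\ell_P(\ell_P-c)} \le \frac{M}{\sqrt{12}}\,\ell_P^2$, establishing the per-piece bound. Summing and using $\ell_P < \epsilon$ with $\sum_P \ell_P = L$ gives $\sum_P \frac{M}{\sqrt{12}}\,\ell_P^2 \le \frac{M}{\sqrt{12}}\,\epsilon \sum_P \ell_P = \frac{1}{\sqrt{12}} M L \epsilon$, as claimed. The main obstacle is precisely this per-piece estimate: one must notice that matching endpoints \emph{exactly} (rather than the cruder $\epsilon$-matching of Proposition \ref{prop:localstab}, whose $8\sqrt{L\epsilon}$-type bound loses a factor of roughly $4$ here) lets the discrepancy be controlled by the excess length $\ell_P - c$, and then extract the worst direction via Lemma \ref{lem:varbound}; the gluing and the final summation are routine.
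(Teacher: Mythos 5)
Your proof is correct, and its skeleton matches the paper's: refine the CW structure along $A$, apply Lemma \ref{lem:glue} so that the vertex correction terms cancel (your realisation of $\ECT^A_f$ as the honest ECT of the piecewise-linear map $g$ is the same computation the paper packages as Lemma \ref{lem:ECTapproxeqn}), bound each edge-versus-chord discrepancy by $\frac{M}{\sqrt{12}}\ell^2$ using Proposition \ref{prop:ectbound}, Lemma \ref{lem:varbound} and Proposition \ref{prop:curvbound}, then sum. Where you genuinely depart is in the two quantitative steps. For the per-edge bound, the paper fixes $v$, rotates so the chord lies in a coordinate plane, bounds the discrepancy by $\sqrt{\ell^2 - G^2\cos^2\theta} - G\sin\theta$ with $G = \ell - \frac{M^2}{24}\ell^3$ the chord-length lower bound, and then runs a derivative analysis in the angle $\theta$ to show the maximum is at $\theta = 0$; you instead decompose $v = \cos\phi\, e_1 + \sin\phi\, u$ along and against the chord, use subadditivity of total variation together with Lemma \ref{lem:varbound}, and maximise $|\cos\phi|(\ell - c) + |\sin\phi|\sqrt{\ell^2 - c^2}$ in closed form to get $\sqrt{2\ell(\ell - c)}$. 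This avoids the calculus entirely, at the cost of a marginally weaker intermediate quantity ($\sqrt{2\ell(\ell - c)}$ versus the paper's $\sqrt{(\ell - G)(\ell + G)}$), but both collapse to the same $\frac{M}{\sqrt{12}}\ell^2$ after the curvature estimate, so nothing is lost in the final bound. Second, for the summation over pieces the paper invokes its convexity lemma (Lemma \ref{thm:lm_sl}) with $f(x) = x^2$, whereas you simply use $\ell_P^2 \le \epsilon\,\ell_P$ and $\sum_P \ell_P = L$; your shortcut is all that is needed here, as the convexity lemma yields exactly the same bound $L\epsilon$ in this application.
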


In practice, the Theorem \ref{thm:interpolationstable} implies that the ECT of a function on a one-dimensional CW complex can be computed approximately via a dense subset. The proof of this theorem is similar to the proof of Theorem \ref{thm:ECTstab}, but requires two additional lemmas.

\begin{lemma}
\label{lem:ECTapproxeqn}
Using the notation Definition \ref{def:lin_interpolation}, let $b_i:=f(a_i)$, and $b_{ij}$ denote the line segment from $b_i$ to $b_j$, and let $c_i$ denote the number of pairs in $E$ containing $i$. Then 
\begin{equation*}
    \ECT^A_f = \sum_{(i,j) \in E} \ECT_{b_{ij}} - \sum_{i = 1}^n (c_i-1)\ECT_{b_i}.
\end{equation*}
\end{lemma}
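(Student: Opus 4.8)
The plan is to verify the claimed identity pointwise in $(v,t)$ by reducing both sides to explicit indicator functions. Fix a unit vector $v$ and write $s_i := \langle b_i, v\rangle$ for each $i$. First I would compute the two building blocks appearing on the right. Since each $b_i$ is a point, $\ECT_{b_i}(v,t) = \mathbb{I}(s_i \leq t)$. For a segment $b_{ij}$, the affine function $x \mapsto \langle x,v\rangle$ attains its extrema on $b_{ij}$ at the endpoints, so the sublevel set $\{x \in b_{ij} : \langle x,v\rangle \leq t\}$ is empty when $t < \min(s_i,s_j)$ and a nonempty (hence contractible) subsegment when $t \geq \min(s_i,s_j)$. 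Consequently $\ECT_{b_{ij}}(v,t) = \mathbb{I}(\min(s_i,s_j) \leq t)$; note this holds even in the degenerate cases $b_i = b_j$ or $s_i = s_j$, and that all these Euler characteristics are finite, so no $\infty$ values arise.

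Next I would expand the right-hand side using the elementary identity
$$\mathbb{I}(\min(s_i,s_j) \leq t) = \mathbb{I}(s_i \leq t) + \mathbb{I}(s_j \leq t) - \mathbb{I}(\max(s_i,s_j) \leq t),$$
which is just inclusion-exclusion for the events $\{s_i \leq t\}$ and $\{s_j \leq t\}$. Summing over $(i,j) \in E$, the last term reproduces $\sum_{(i,j) \in E}\mathbb{I}(\max(s_i,s_j) \leq t)$, exactly the edge count subtracted in Definition \ref{def:lin_interpolation}. The remaining endpoint terms regroup by a handshake-type count: since $c_i$ is by definition the number of pairs of $E$ incident to $i$ (with multiplicity), we have $\sum_{(i,j)\in E}[\mathbb{I}(s_i \leq t) + \mathbb{I}(s_j \leq t)] = \sum_{i=1}^n c_i\,\mathbb{I}(s_i \leq t)$.

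Finally I would substitute these back and collect the vertex terms: the contribution $\sum_i c_i\,\mathbb{I}(s_i \leq t)$ coming from the segments, minus the correction $\sum_i (c_i - 1)\mathbb{I}(s_i \leq t)$, telescopes to $\sum_{i=1}^n \mathbb{I}(s_i \leq t) = \#\{i : s_i \leq t\}$. What remains is precisely
$$\#\{i : s_i \leq t\} - \#\{(i,j) \in E : \max(s_i,s_j) \leq t\} = \ECT^A_f(v,t),$$
matching Definition \ref{def:lin_interpolation}. I expect the only point needing care to be the bookkeeping with the multiset $E$ and the degrees $c_i$ — in particular checking that the handshake identity respects multiplicities and that the telescoping of the $c_i$ terms is exact — rather than any genuine topological difficulty, since the point and segment computations are elementary.
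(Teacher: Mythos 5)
Your proof is correct and follows essentially the same route as the paper's: both reduce $\ECT_{b_i}$ and $\ECT_{b_{ij}}$ to the indicators $\mathbb{I}(\langle b_i,v\rangle \leq t)$ and $\mathbb{I}(\min(\langle b_i,v\rangle,\langle b_j,v\rangle) \leq t)$, and then perform the same degree-counting and telescoping with the $c_i$. Your inclusion--exclusion identity for $\min$/$\max$ is just a compact repackaging of the paper's partition of $E$ into the submultisets $E_{\textrm{up}}$, $E_{\textrm{down}}$, $E_{\textrm{mid}}$, so the two arguments are the same computation organised slightly differently.
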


\begin{proof}
Fix some $v$ and $t$. If $\max(\langle b_i,v\rangle, \langle b_j, v\rangle) \leq t$, then $\ECT_{b_{ij}}(v,t) = \chi(b_{ij}) = 1$. If instead $\min(\langle b_i,v\rangle, \langle b_j, v\rangle)> t$, then $\ECT_{b_{ij}}(v,t) = \chi(\emptyset) = 0$. Otherwise, without loss of generality, suppose $\langle b_i,v \rangle \leq t$ and $\langle b_j,v \rangle > t$. Again, we have that $\ECT_{b_{ij}(v,t)}$ is equal to the Euler characteristic of a line segment, which is equal to 1.

Define the submultisets
\begin{equation*}
    \begin{split}
        E_{\textrm{up}}& = \{(i,j)\in E: \min(\langle b_i,v\rangle, \langle b_j, v\rangle) > t\},\\
        E_{\textrm{down}}&=\{(i,j)\in E: \max(\langle b_i,v\rangle, \langle b_j, v\rangle) \leq t\},\\
        E_{\textrm{mid}}&= \{(i,j)\in E: \max(\langle b_i,v\rangle, \langle b_j, v\rangle) > t, \; \min(\langle b_i,v\rangle, \langle b_j, v\rangle) \leq t\}.
    \end{split}
\end{equation*}
Note $E = E_{\textrm{up}} \sqcup E_{\textrm{down}} \sqcup E_{\textrm{mid}}$. Therefore,
\begin{equation*}
    \begin{split}
        &\sum_{(i,j) \in E} \ECT_{b_{ij}}(v,t) - \sum_{i = 1}^n (c_i-1)\ECT_{b_i}(v,t)\\
        &=\sum_{(i,j) \in E_{\textrm{up}}} \ECT_{b_{ij}}(v,t) + \sum_{(i,j) \in E_{\textrm{down}}} \ECT_{b_{ij}}(v,t)\\
        &\qquad\qquad+ \sum_{(i,j) \in E_{\textrm{mid}}} \ECT_{b_{ij}}(v,t) - \sum_{i = 1}^n (c_i-1)\ECT_{b_i}(v,t)\\
        & = \sum_{(i,j) \in E_{\textrm{down}}} 1 + \sum_{(i,j) \in E_{\textrm{mid}}} 1 - \sum_{i = 1}^n (c_i-1)\ECT_{b_i}(v,t)\\
        &= \sum_{(i,j) \in E_{\textrm{down}}} (2-1) + \sum_{(i,j) \in E_{\textrm{mid}}} 1 - \sum_{i = 1}^n (c_i-1)\ECT_{b_i}(v,t)\\
        &= \sum_{(i,j) \in E_{\textrm{down}}} 2 + \sum_{(i,j) \in E_{\textrm{mid}}} 1 - \sum_{i = 1}^n (c_i-1)\ECT_{b_i}(v,t) - \sum_{(i,j) \in E_{\textrm{down}}} 1\\
        &= \sum_{i = 1}^n c_i\ECT_{b_i}(v,t) - \sum_{i = 1}^n (c_i-1)\ECT_{b_i}(v,t) - \sum_{(i,j) \in E_{\textrm{down}}} 1\\
        &= \sum_{i = 1}^n \ECT_{b_i}(v,t) - \sum_{(i,j) \in E_{\textrm{down}}} 1 = \ECT^A_f(v,t).\\
    \end{split}
\end{equation*}
\end{proof}

\begin{lemma}\label{thm:lm_sl}
Let $f:\mathbb{R}_{\geq 0}\to \mathbb{R}_{\geq 0}$ be any differentiable function with increasing positive derivative satisfying $f(0) = 0$. For positive numbers $L$ and $\epsilon$ consider the set
\begin{equation*}
    S(L) = \left\{(x_1,\ldots,x_k)\in\mathbb{R}^k: 0\leq x_i\leq \epsilon, \; \sum_{i=0}^k x_i = L\right\}.
\end{equation*}
If $S(L)$ is non-empty, then 
\begin{equation*}
    \sum_{i=1}^k f(x_i)\leq Lf(\epsilon)/\epsilon
\end{equation*}
on $S(L)$ (note that $S(L)$ is always non-empty if $k > L/\epsilon$).
\end{lemma}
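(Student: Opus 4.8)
The plan is to reduce the problem to a per-coordinate estimate using convexity, thereby avoiding any direct constrained optimisation over the polytope $S(L)$. The key observation is that since $f$ has an increasing derivative, $f$ is convex on $\mathbb{R}_{\geq 0}$; together with the hypothesis $f(0)=0$, this forces the chord joining $(0,0)$ to $(\epsilon,f(\epsilon))$ to lie above the graph of $f$ on $[0,\epsilon]$, i.e. the secant slope $x\mapsto f(x)/x$ is nondecreasing.

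First I would establish the pointwise bound: for any $x$ with $0\leq x\leq\epsilon$,
\begin{equation*}
    f(x)\leq\frac{x}{\epsilon}f(\epsilon).
\end{equation*}
When $x=0$ this is simply $f(0)=0$. When $0<x\leq\epsilon$, I would write $x$ as the convex combination $x=\tfrac{x}{\epsilon}\cdot\epsilon+\big(1-\tfrac{x}{\epsilon}\big)\cdot 0$, apply the definition of convexity, and use $f(0)=0$ to annihilate the second term. The second and final step is to sum this inequality over the coordinates of an arbitrary point $(x_1,\ldots,x_k)\in S(L)$ and substitute the constraint $\sum_i x_i=L$:
\begin{equation*}
    \sum_{i=1}^k f(x_i)\leq\sum_{i=1}^k\frac{x_i}{\epsilon}f(\epsilon)=\frac{f(\epsilon)}{\epsilon}\sum_{i=1}^k x_i=\frac{Lf(\epsilon)}{\epsilon},
\end{equation*}
which is exactly the claimed bound.

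I expect no serious obstacle here: the only genuine content is recognising the convexity-through-the-origin structure, after which the result is immediate, and the box constraints $0\leq x_i\leq\epsilon$ are used only to guarantee each $x_i$ lies in the range where the chord bound applies. The tempting alternative of maximising $\sum_i f(x_i)$ directly over the polytope $S(L)$ — for instance via an extreme-point argument, noting that a convex function attains its maximum on a polytope at a vertex, where all but at most one coordinate equal $0$ or $\epsilon$ — would also succeed, but it is strictly more laborious and requires verifying the vertex structure of $S(L)$, so I would avoid it in favour of the linearisation above.
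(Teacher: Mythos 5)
Your proof is correct, and it takes a genuinely different and more economical route than the paper. The paper attacks the problem as a constrained optimisation: it first proves an exchange inequality $f(a+b)+f(c)\leq f(a)+f(b+c)$ for $a\leq c$, $b\geq 0$, invokes compactness of $S(L)$ to obtain a maximiser of $\sum_i f(x_i)$, and repeatedly applies the exchange move to show a maximiser exists in which all but one coordinate equals $0$ or $\epsilon$; only then does it use convexity to bound the extreme value $\lfloor L/\epsilon\rfloor f(\epsilon)+f\big(L-\lfloor L/\epsilon\rfloor\epsilon\big)$ by $Lf(\epsilon)/\epsilon$. Your argument short-circuits all of this with the chord bound $f(x)\leq \tfrac{x}{\epsilon}f(\epsilon)$ on $[0,\epsilon]$, valid because $f$ is convex (increasing derivative) and $f(0)=0$, and then sums and uses the constraint $\sum_i x_i=L$. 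This is essentially the rearrangement-free version of the paper's final convexity step applied to every coordinate at once; it proves the inequality pointwise on $S(L)$ with no appeal to compactness, existence of maximisers, or the vertex structure of the polytope. What the paper's longer route buys is the additional structural information that the supremum is attained at a configuration with coordinates saturated at $0$ or $\epsilon$ (and an exact formula for that extreme value), but none of that is needed for the stated bound, so your proof is not only valid but preferable for the lemma as stated.
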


\begin{proof}
Let $a\leq c$ and $b \geq 0$. We have
\begin{equation*}
    f(b + c) - f(a + b) - \big(f(c)-f(a)\big) = \int_a^c f'(b + t) - f'(t)\;\mathrm{d}t \geq 0
\end{equation*}
and so
\begin{equation}
\label{eqn:increasingderivativeinequality}
    f(a + b) + f(c) \leq f(a) + f(b + c) \quad \textrm{when} \quad a\leq c \textrm{ and } b \geq 0.
\end{equation}

Suppose $S(L)$ is non-empty. Since $S(L)$ is compact, $f$ must attain a maximum on $S(L)$. Pick any such maximiser $x = (x_1,\ldots,x_k)\in S(L)$. By potentially reordering entries, we may assume the $x_i$ are in decreasing order without affecting the value of $\sum_i f(x_i)$. Let $j$ be the smallest index with $x_j \neq \epsilon$ and $l$ be the largest index with $x_l$ not equal to zero.

If $l > j$, let $m = \min(x_l, \epsilon - x_j)$. Equation (\ref{eqn:increasingderivativeinequality}) shows that if we replace $x_j$ with $x_j + m$ and replace $x_l$ with $x_l - m$, the value of $\sum_i f(x_i)$ does not decrease. Therefore, by applying this replacement procedure several times, we can always find a maximiser of $\sum_i f(x_i)$ on $S(L)$ with $j \geq l$. This condition forces the value of $\sum_i f(x_i)$ to be
\begin{equation*}
    \lfloor{L/\epsilon}\rfloor f(\epsilon) + f\big(L - \lfloor{L/\epsilon}\rfloor\epsilon\big).
\end{equation*}
If $L$ is divisible by $\epsilon$, the result is immediate. Otherwise, since $f$ is convex,
\begin{equation*}
    \begin{split}
        f(L - \lfloor{L/\epsilon}\rfloor\epsilon) &= f\Big(\big(\lceil L/\epsilon\rceil-L/\epsilon\big)0 + \big(L/\epsilon - \lfloor{L/\epsilon}\rfloor\big)\epsilon\Big)\\
        &\leq \big(\lceil L/\epsilon\rceil-L/\epsilon\big)f(0) + \big(L/\epsilon - \lfloor{L/\epsilon}\rfloor\big)f(\epsilon)\\
        &= \big(L/\epsilon - \lfloor{L/\epsilon}\rfloor\big)f(\epsilon).
    \end{split}
\end{equation*}
Thus
\begin{equation*}
    \lfloor{L/\epsilon}\rfloor f(\epsilon) + f\big(L - \lfloor{L/\epsilon}\rfloor\epsilon\big) \leq \lfloor{L/\epsilon}\rfloor f(\epsilon) + \big(L/\epsilon - \lfloor{L/\epsilon}\rfloor\big)f(\epsilon) = Lf(\epsilon)/\epsilon.
\end{equation*}
Since the value on the left is the maximum of $\sum_i f(x_i)$ on $S(L)$, we are done.
\end{proof}

\begin{proof}[Proof of Theorem \ref{thm:interpolationstable}]
For each $e = (i,j) \in E$, we let $b_e= b_{ij}$. Each $e\in E$ corresponds to some open interval in $Z-A$.
We can always fix a finer CW structure $Z^\dagger = (Z, A, \{\Phi_e\}_{e\in E})$ of $Z$, and still have that $f\in\mathcal{F}^r(Z^\dagger,d)$. Let $\gamma_e = f\circ \Phi_e$. Adopting the notation of Lemma \ref{lem:ECTapproxeqn}, induction on the number of elements in $E$ with Lemma \ref{lem:glue} applied to $Z^\dagger$ gives that for fixed $v$,
\begin{equation*}
    \ECT_f(v,t) = \sum_{e\in E} \ECT_{\gamma_e}(v,t) - \sum_{i=1}^n (c_i-1)\ECT_{b_i}(v,t),
\end{equation*}
for almost all $t$.

Therefore, again fixing $v$ and using Lemma \ref{lem:ECTapproxeqn},
\begin{equation}
\label{eqn:splitinterpolatedECT}
    \begin{split}
    &\int_\mathbb{R}\big|\ECT_f(v,t) - \ECT^A_f(v,t)\big|\; \mathrm{d}t\\
    = &\int_\mathbb{R} \bigg|\sum_{e\in E} \ECT_{\gamma_e}(v,t) - \sum_{i=1}^n (c_i-1)\ECT_{b_i}(v,t) - \Big[ \sum_{e \in E} \ECT_{b_{e}}(v,t) - \sum_{i = 1}^n (c_i-1)\ECT_{b_i}(v,t)\Big] \bigg|\; \mathrm{d}t\\
    =&\int_\mathbb{R} \Big|\sum_{e\in E} \ECT_{\gamma_e}(v,t) - \sum_{e \in E} \ECT_{b_{e}}(v,t) \Big|\; \mathrm{d}t\\
    \leq &\sum_{e\in E}\int_\mathbb{R} \Big|\ECT_{\gamma_e}(v,t) - \ECT_{b_{e}}(v,t) \Big|\; \mathrm{d}t.
    \end{split}
\end{equation}

Focusing on any particular $e = (i,j)\in E$, let $d_1$ be the minimum of $\langle \gamma_e(s),v\rangle$ over $s$, and $d_4$ be the maximum of the same function over $s$. Let $d_2 = \min(\langle b_i,v\rangle, \langle b_j,v\rangle))$ and $d_3 = \max(\langle b_i,v\rangle),\langle b_j,v\rangle))$. It follows that $d_1\leq d_2 \leq d_3 \leq d_4$.

Since subsets of $I$ and $b_e$ always consist of contractible components, $\ECT_{\gamma_e}$ and $\ECT_{b_e}$ never have negative values. We have
\begin{equation*}
    \begin{split}
        t\geq d_1 \quad &\implies \quad \ECT_{\gamma_e}(v,t)\geq1,\\
        t\geq d_4 \quad &\implies \quad \ECT_{\gamma_e}(v,t) = 1,\\
        t < d_1 \quad &\implies \quad \ECT_{\gamma_e}(v,t) = 0,\\
        t\geq d_2 \quad &\implies \quad \ECT_{b_e}(v,t) = 1,\\
        t < d_2 \quad &\implies \quad \ECT_{b_e}(v,t) = 0.
    \end{split}
\end{equation*}
Combining these observations, we see
\begin{equation*}
    \begin{split}
        \int_\mathbb{R} \Big|\ECT_{\gamma_e}(v,t) - \ECT_{b_{e}}(v,t) \Big|\; \mathrm{d}t &= \int_{d_1}^{d_4} \ECT_{\gamma_e}(v,t) -\ECT_{b_e}(v,t)\;\mathrm{d}t\\
        & \leq \int_{d_1}^{d_4} \ECT_{\gamma_e}(v,t)\;\mathrm{d}t - (d_3 - d_2).\\
    \end{split}
\end{equation*}
After applying a rotation, we may assume that $v = (0,1,0,\ldots,0)$. After applying another rotation about $v$ we may assume that $b_e$ is parallel to the plane spanned by the first two coordinates. Let $l_e$ be the arc length of $\gamma_e$. By Proposition \ref{prop:curvbound}, the length of $b_e$ is at least $l_e - M^2l_e^3/24$.
Suppose that the line segment $b_e$ meets the hyperplane perpendicular to $v$ at an angle $\theta \in [0,\pi/2]$.

Applying Proposition \ref{prop:ectbound} and Lemma \ref{lem:varbound} to this scenario, we observe
\begin{equation*}
    \int_{d_1}^{d_4} \ECT_{\gamma_e}(v,t)\;\mathrm{d}t - (d_3 - d_2)\leq \sqrt{l_e^2 - \left(l_e-\frac{M^2}{24}l_e^3\right)^2\cos^2\theta} - \left(l_e-\frac{M^2}{24}l_e^3\right)\sin\theta.
\end{equation*}
We refer to the right side of this inequality as $f(\theta)$. Let $G = l_e - M^2l_e^3/24$. $G$ is positive since $l_e < \epsilon < \pi/M < \sqrt{24}/M$. We have
\begin{equation*}
    f'(\theta) = \frac{G^2\sin\theta\cos\theta}{\sqrt{l_e^2-G^2\cos^2\theta}} -G\cos\theta    .
\end{equation*}
A routine calculation shows that $f'$ is either zero only when $\theta=\pi/2$ or for every $\theta$. Meanwhile $f'(0) = -G$. Since this value is negative, $f$ must be maximised at $\theta = 0$. Hence,
\begin{equation}
\label{eqn:interpolatedlineguess}
    \begin{split}
        \int_\mathbb{R} \left|\ECT_{\gamma_e}(v,t) - \ECT_{b_{e}}(v,t) \right|\; \mathrm{d}t&\leq\int_{d_1}^{d_4} \ECT_{\gamma_e}(v,t)\;\mathrm{d}t - (d_3 - d_2)\\
        &\leq \sqrt{l_e^2 - \left(l_e-\frac{M^2}{24}l_e^3\right)^2}\\
        & = \sqrt{\frac{M^2}{12}l_e^4-\frac{M^4}{24^2}l_e^6}\\
        &\leq\frac{M}{\sqrt{12}}l_e^{2}.
    \end{split}
\end{equation}
For $\lambda\in\Lambda$, let $L_\lambda$ denote the arc length of $f\circ\Phi_\lambda$. Now for $\lambda\in\Lambda$, denote by $\Gamma(\lambda)$ the submultiset of $e\in E$ such that $\im\Phi_e$ is a subset of $\im\Phi_\lambda$. By Equations (\ref{eqn:splitinterpolatedECT}) and (\ref{eqn:interpolatedlineguess}), along with Lemma \ref{thm:lm_sl}, we get that
\begin{equation*}
    \begin{split}
    \int_\mathbb{R}\Big|\ECT_f(v,t) - \ECT^A_f(v,t)\Big|\; \mathrm{d}t &\leq\frac{M}{\sqrt{12}}\sum_{e\in E}l_e^2\\
    & = \frac{M}{\sqrt{12}}\sum_{\lambda\in \Lambda}\sum_{e\in\Gamma(\lambda)}l_e^2\\
    & \leq \frac{M}{\sqrt{12}}\sum_{\lambda\in\Lambda}L_\lambda\epsilon^2/\epsilon\\
    &= \frac{ML\epsilon}{\sqrt{12}}.
    \end{split}
\end{equation*}
Since this bound holds for any $v$, we are done.
\end{proof}

\section{ECT Stability of Random Data}\label{sec:ect_prob_stab}

In this section, we consider observations taken from an embedded finite one-dimensional CW complex $Z$ which are perturbed by ambient Gaussian noise. We show that the Gaussian smoothing of these observations converges to satisfy the assumptions of Proposition \ref{prop:funcstab}. 
In particular, we show that the ECT and SECT of the Gaussian smoothing give consistent estimators of the ECT and SECT of $Z$, respectively. To provide the theorems, we first need to introduce technical conditions on the kernel we use in the Gaussian smoothing:

\begin{definition}[Definition 5 in \cite{koepernik2021consistency}]
Let $Z$ be a topological space and $k:Z\times Z\to\mathbb{R}$ be a continuous kernel. Define
$$d_k(t,s)=\sqrt{k(t,t)+k(s,s)-2k(t,s)}.$$
For any $\varepsilon>0$ let $N(Z, \varepsilon, d_k)$ be the minimal numbers of $d_k$-balls with radius $\varepsilon$ needed to cover $Z$. Then define
$$J(Z,d_k)=\int_0^\infty \sqrt{\log N(Z, \varepsilon, d_k)} \,\mathrm{d}\varepsilon.$$
\end{definition}

\begin{definition}
    Let $Z^* = (Z, Z_0,\{\Phi_\lambda\}_{\lambda\in\Lambda})$ be a connected finite one-dimensional CW complex with some fixed cellular decomposition. Let $k:Z\times Z\to\RR$ be a continuous kernel. We say $k$ is $r$-times differentiable on $Z^*$ if 
    \begin{enumerate}
        \item for each $\lambda\in\Lambda$ the map $k^\lambda:I\times I\to\RR$ given by $(s,t)\mapsto k(\Phi_\lambda(s),\Phi_\lambda(t))$ is $r$-times continuously differentiable and
        \item for each $\lambda\in\Lambda$ and $z\in Z$ the map $k^{\lambda,z}:I\to\RR$ given by $s\mapsto k(\Phi_\lambda(s),z)$ is $r$-times continuously differentiable.
    \end{enumerate}
    Differentiability is defined by one-sided limits at the boundaries of $I\times I$ and $I$.
\end{definition}

\begin{remark}
    For a given connected finite 1
    one-dimensional CW complex $Z^* = (Z, Z_0,\{\Phi_\lambda\}_{\lambda\in\Lambda})$ with fixed cellular composition there is a straightforward way to construct an $r$-times differentiable kernel on $Z^*$: let $f:Z\to\RR^d$ be a continuous function such that $f\circ\Phi_\lambda$ is $r$-times differentiable for each $\lambda\in\Lambda$. Then if $k$ is an $r$-times differentiable kernel on $\RR^d$, it follows that $k'(s,t):=k(f(s),f(t))$ is an $r$-times differentiable kernel on $Z$ by the chain rule.

    While it might be tempting to define a geodesic distance on $Z$ and then apply a stationary kernel (such as the Gaussian kernel) to this distance, it should be noted that, even in the case of $Z$ being a manifold, the resulting function does not give a positive-definite kernel in general \cite{feragen2015geodesic}.
\end{remark}

We can now state the first theorem of this section:

\begin{theorem}\label{thm:smoothing}
Let $Z^* = (Z, Z_0,\{\Phi_\lambda\}_{\lambda\in\Lambda})$ be a connected finite one-dimensional CW complex with some fixed cellular structure.
Let $k:Z\times Z\to\mathbb{R}$ be a continuous, four-times differentiable kernel on $Z^*$. Assume $k$ satisfies $J(Z,d_k)<\infty$.

Let $f:Z\to\mathbb{R}$ be a function in the RKHS of $k$. Let $\mathbf{a}\subset Z$ be a sequence which is dense. Denote by $\mathbf{a}_n$ the first $n$ terms of $\mathbf{a}$ and by $a_n$ the $n$-th term of $\mathbf{a}$. Let $\hat{f}_n$ denote the Gaussian smoothing of $f$ based on observations $y_i=f(a_i)+\zeta_i$ using kernel $k$, where $i=1,...,n$ and $\zeta_i\sim\mathcal{N}(0,\sigma)$ i.i.d. for some $\sigma>0$. Then
$$\mathbb{E}\left[\left\|\hat{f}_n(t,\mathbf{a}_n, f)-f(t)\right\|_\infty\right]\to 0$$
as $n\to\infty$.
Moreover, for each $\lambda\in\Lambda$ define $\hat{f}_{n,\lambda}(t)=\hat{f}_n(\Phi_\lambda(t))$ and  $f_{\lambda}(t)=f(\Phi_\lambda(t))$. Then
$$\mathbb{E}\left[\left|V\left(\hat{f}_{n,\lambda}\right) -V\left(f_\lambda\right)\right|^2\right]\to0$$ 
on each 1-cell of $Z^*$ as $n\to\infty$; i.e. the variation of $\hat{f}_{n,\lambda}$ converges to the variation of ${f}_{\lambda}$ in mean square.
\end{theorem}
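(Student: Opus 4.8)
The plan is to treat the two assertions separately. The uniform convergence $\mathbb{E}[\|\hat{f}_n-f\|_\infty]\to 0$ should follow as a direct application of the consistency theorem of Koepernik and Pfaff \cite{koepernik2021consistency}: I would verify that $Z$ is a compact metric space (being a finite one-dimensional CW complex), that $k$ is continuous with $J(Z,d_k)<\infty$ by assumption, that $f\in\mathcal{H}_k$, and that the noise is i.i.d.\ Gaussian on the dense design $\mathbf{a}$. Their result yields uniform convergence of the posterior mean in probability and almost surely; to upgrade to convergence in expectation I would invoke uniform integrability, using that $\|\hat{f}_n-f\|_\infty$ is dominated by the (uniformly bounded) supremum of the prior process together with the posterior-variance bound supplied by the entropy integral.

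For the variation statement I fix a $1$-cell indexed by $\lambda$ and work on $I=[0,1]$ through $\Phi_\lambda$. Since $V(g)=\int_0^1|g'|$, the reverse triangle inequality gives $|V(\hat{f}_{n,\lambda})-V(f_\lambda)|\le\int_0^1|\hat{f}_{n,\lambda}'-f_\lambda'|$, and by Cauchy--Schwarz together with Tonelli,
\begin{equation*}
\mathbb{E}\left[\left|V(\hat{f}_{n,\lambda})-V(f_\lambda)\right|^2\right]\le\int_0^1\mathbb{E}\left[\left|\hat{f}_{n,\lambda}'(t)-f_\lambda'(t)\right|^2\right]\,\mathrm{d}t.
\end{equation*}
It therefore suffices to show that the pointwise mean-square error of the derivative estimate tends to $0$ and to dominate it uniformly in $t$ so that the limit may be moved inside the integral.

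The four-times differentiability of $k$ on $Z^*$ is exactly what lets me differentiate the posterior-mean formula (\ref{eq:posterior}) term by term: writing $\hat{f}_{n,\lambda}(t)$ as a finite linear combination of the maps $t\mapsto k(\Phi_\lambda(t),a_i)$, each four-times continuously differentiable, I obtain that $\hat{f}_{n,\lambda}'(t)$ is the corresponding linear combination of the $\partial_t k(\Phi_\lambda(t),a_i)$. Consequently $\hat{f}_{n,\lambda}'$ is itself the Gaussian-process posterior mean of the derivative process $D_t:=\partial_t f(\Phi_\lambda(t))$, whose prior kernel is $\tilde{k}^\lambda(s,t):=\partial_s\partial_t k^\lambda(s,t)$. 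Since $k$ is four-times differentiable, $\tilde{k}^\lambda$ is a continuous (indeed $C^2$) kernel on the compact interval $I$; its induced metric $d_{\tilde{k}^\lambda}$ obeys a H\"older bound, so the covering numbers $N(I,\varepsilon,d_{\tilde{k}^\lambda})$ grow only polynomially and $J(I,d_{\tilde{k}^\lambda})<\infty$. Moreover $f_\lambda'$ is the image of $f$ under the bounded linear functional of derivative evaluation, so it lies in the RKHS of $\tilde{k}^\lambda$.

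The decisive point, and the step I expect to be the main obstacle, is that we observe only noisy values of $f$, not of its derivative, so consistency of $\hat{f}_{n,\lambda}'$ cannot be read off directly from the first assertion. I would handle this by a bias--variance decomposition of $\mathbb{E}[|\hat{f}_{n,\lambda}'(t)-f_\lambda'(t)|^2]$, bounding both pieces in terms of the posterior variance of $D_t$ given the function observations, namely
\begin{equation*}
\tilde{k}^\lambda(t,t)-\partial_t K(\{\Phi_\lambda(t)\},\mathbf{a}_n)\,(K(\mathbf{a}_n,\mathbf{a}_n)+\sigma^2 I)^{-1}\,\partial_t K(\mathbf{a}_n,\{\Phi_\lambda(t)\}),
\end{equation*}
the RKHS norm of $f$, and $\sigma$. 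This quantity is deterministic, and the crux is that it vanishes uniformly in $t$ as the design densifies: the representers $\partial_t k(\Phi_\lambda(t),\cdot)$ lie in $\mathcal{H}_k$, vary continuously, and are uniformly bounded in the RKHS norm by smoothness, while the closed span of $\{k(a_i,\cdot)\}$ fills $\mathcal{H}_k$ because $\mathbf{a}$ is dense; the finiteness of the entropy integral then forces the regularized approximation error, hence the posterior variance of the derivative, to zero uniformly even at fixed noise level $\sigma$. This is the same mechanism underlying the function-level result of \cite{koepernik2021consistency}, now applied to the bounded functional of derivative evaluation. With the pointwise mean-square error shown to tend to $0$ and dominated by the uniform bound $\sup_t\tilde{k}^\lambda(t,t)<\infty$, dominated convergence passes the limit through the integral over $I$, yielding mean-square convergence of the variation on each $1$-cell.
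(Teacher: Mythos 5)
Your handling of the first assertion and your opening reduction of the second (reverse triangle inequality, Jensen/Cauchy--Schwarz, Tonelli, reducing everything to the pointwise mean-square error $\mathbb{E}\left[|\hat{f}'_{n,\lambda}(t)-f'_\lambda(t)|^2\right]$, which equals the derivative posterior variance $v'_{n,\lambda}(t)$) coincide with the paper's proof; note only that Theorem 8 of \cite{koepernik2021consistency} already gives convergence in mean, so your uniform-integrability upgrade is unnecessary. The genuine gap is at the step you yourself call the crux: you assert that $v'_{n,\lambda}(t)=k^\lambda_{xy}(t,t)-K^\lambda_x(t,\mathbf{a}_n)(K(\mathbf{a}_n,\mathbf{a}_n)+\sigma^2I)^{-1}K^\lambda_y(\mathbf{a}_n,t)$ vanishes uniformly ``by the same mechanism'' as the function-level result of Koepernik--Pfaff. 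That mechanism does not transfer. Their argument concerns predicting \emph{function values} from noisy \emph{function observations}: the representer $k(\cdot,t)$ is approximated by an average of nearby representers $k(\cdot,a_i)$ with coefficients of size $1/m$, so the RKHS approximation error and the noise penalty $\sigma^2\|\alpha\|_2^2$ go to zero together. For the derivative functional, the natural approximants are finite differences, whose coefficients have size of order $1/(mh)$; shrinking the bias forces $h\to 0$, which inflates the noise penalty unless the number of design points clustered at scale $h$ is grown at a rate tied to $h$. This balancing act is a genuinely new quantitative argument, and neither $J(I,d_{\tilde{k}^\lambda})<\infty$ nor $f'_\lambda\in\mathcal{H}_{\tilde{k}^\lambda}$ substitutes for it: those hypotheses would feed a Koepernik--Pfaff-type theorem only if you observed noisy values of the \emph{derivative} process, which you do not. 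As written, the decisive claim is an assertion, not a proof.

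It is worth seeing how the paper sidesteps exactly this difficulty: it never proves pointwise or uniform convergence of $v'_{n,\lambda}$ at all. Instead it uses positive semi-definiteness of the joint posterior covariance of $\left(f_\lambda(t),f'_\lambda(t)\right)$; the determinant inequality gives that the squared posterior cross-covariance of $f_\lambda(t)$ and $f'_\lambda(t)$ is at most $v_n(t)\,v'_{n,\lambda}(t)$, where $v_n(t)\to 0$ uniformly by Proposition 10 of \cite{koepernik2021consistency} and $v'_{n,\lambda}(t)$ is merely \emph{bounded}, thanks to the monotonicity in $n$ established via Schur complements in Lemma \ref{lem:mono}. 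Hence the cross-covariance with $f'_\lambda$ tends to zero uniformly, and the same argument applied to $f''_\lambda$ (this is precisely where four-times differentiability of $k$ is used) kills the cross-covariance with the second derivative. The fundamental theorem of calculus then expresses $\int_0^1 v'_{n,\lambda}(t)\,\mathrm{d}t$ exactly in terms of these two cross-covariance functions, so the integral --- which is all your Jensen/Tonelli reduction requires --- tends to zero. To rescue your route you would either need to supply the finite-difference/averaging argument sketched above (plus, say, Dini's theorem with the monotonicity of Lemma \ref{lem:mono} to get uniformity), or adopt the paper's determinant-plus-FTC trick; the latter is strictly weaker in what it demands of the kernel and of the design, which is presumably why the paper takes it.
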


When proving the above result, we write $k_x$ and $k_y$ for the partial derivatives in the first and second components, respectively, and $K_x$ and $K_y$ for their corresponding Gram matrices. In particular, for fixed $t\in I$, $\lambda\in\Lambda$, and $\mathbf{a}$, we write $K^\lambda_x(t,\mathbf{a}_n)=[k_x^{\lambda,a_1}(t),...,k_x^{\lambda,a_n}(t)]$ and $K^\lambda_y$ for its transpose. A repeated subscript indicates repeated differentiation in that variable.

Let $g:X\to\RR$ be a GP with kernel $k$ and a deterministic function $h:X'\to X$. Then $g\circ h$ is a GP with kernel $k'(x,y):=k(h(x),h(y))$ for all $x,y\in X'$. This insight immediately follows from the definition of a GP in Definition \ref{def:gp_def}. In particular, for the GP $f$ in the statement of this theorem and any $\lambda\in\Lambda$, the composition $f\circ\Phi_\lambda$ is a GP for any number of observations $n$.

The derivative of a Gaussian process on $I$ with a differentiable kernel is almost surely differentiable. As differentiation is a linear operator, the derivative of a Gaussian process is again a Gaussian process in such a case \cite{rasmussen2003gaussian}. In particular, this derivative GP has kernel $k_{xy}$ and for any $t\in I$ we have the joint distribution
\begin{equation}
\begin{bmatrix}
    g(t)\\ g'(t)
\end{bmatrix}\sim\mathcal{N}\left(
\begin{bmatrix}
    \mu(t)\\ \mu'(t)
\end{bmatrix},
\begin{bmatrix}
    k(t,t) & k_y(t,t)\\
    k_x(t,t) & k_{xy}(t,t)
\end{bmatrix}\right).\label{eq:joint_dist}
\end{equation}

In Theorem \ref{thm:smoothing}, we consider the GP regression of $f_\lambda$ based on observations at $\mathbf{a}$ for fixed $\lambda$. Even if not all elements in the sequence $\mathbf{a}$ need to be in the image of $\Phi_\lambda$, the GP posterior pre-composed with $\Phi_\lambda$ defines a GP on $I$. We are interested in the convergence of the derivative of this GP.

For fixed $\lambda\in\Lambda$, we denote the variance of this derivative GP at $t\in I$ by $v'_{n,\lambda}(t)$ (which is not the same as the derivative of $v_{n,\lambda}(t)$ in $t$). In particular, we have
$$v'_{n,\lambda}(t)=k^\lambda_{xy}(t,t)-K^\lambda_{x}(t,\mathbf{a}_n)(K(\mathbf{a}_n,\mathbf{a}_n)+\sigma^2I)^{-1}K^\lambda_{y}(\mathbf{a}_n,t).$$

\begin{lemma}\label{lem:mono}
    Given the Gaussian processes of Theorem \ref{thm:smoothing}, we get that
    for each $\lambda\in\Lambda$ the $v'_{n,\lambda}$ satisfy
    $$v'_{n,\lambda}(t)=\mathbb{E}\left[\left|\hat{f}'_{n,\lambda}(t,\mathbf{a}_n, f)-f'_\lambda(t)\right|^2\right].$$
    Furthermore, $v'_{n,\lambda}(t)$ is monotonically decreasing in $n$ for all $t\in I$.
\end{lemma}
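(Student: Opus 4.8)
The plan is to read both assertions as standard facts about Gaussian conditioning, applied to the joint law of the derivative process $f'_\lambda$ and the noisy observations, with the expectation understood over the Gaussian process prior on $f$ together with the observation noise. First I would fix $\lambda\in\Lambda$ and $t\in I$ and assemble the relevant jointly Gaussian collection. Since $k$ is (at least) twice differentiable on $Z^*$, the excerpt already records that $f_\lambda=f\circ\Phi_\lambda$ is a Gaussian process on $I$ whose derivative $f'_\lambda$ is again a Gaussian process with kernel $k^\lambda_{xy}$, jointly Gaussian with $f_\lambda$ as in Equation~(\ref{eq:joint_dist}). Writing $\mathbf{y}_n=\mathbf{f}(\mathbf{a}_n)+\boldsymbol\zeta$ for the vector of the first $n$ observations (taken at points $\mathbf{a}_n\subseteq Z$, not necessarily on the $\lambda$-th cell), the scalar $f'_\lambda(t)$ and the vector $\mathbf{y}_n$ are jointly Gaussian because $\boldsymbol\zeta$ is independent Gaussian noise; the cross-covariance of $f'_\lambda(t)$ with the data is $K^\lambda_x(t,\mathbf{a}_n)$ and the marginal variance of $f'_\lambda(t)$ is $k^\lambda_{xy}(t,t)$. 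Conditioning $f'_\lambda(t)$ on $\mathbf{y}_n$ via the usual Gaussian formulas then reproduces exactly the posterior mean $\hat f'_{n,\lambda}(t,\mathbf{a}_n,f)$ and the posterior variance $v'_{n,\lambda}(t)$ written just before the lemma.

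For the first identity I would invoke the general fact that for a jointly Gaussian pair $(U,\mathbf{Y})$ the conditional variance $\mathrm{Var}(U\mid\mathbf{Y})$ is deterministic, independent of the realised value of $\mathbf{Y}$, and therefore equals the unconditional mean squared deviation of $U$ from its conditional mean:
\begin{equation*}
\mathrm{Var}(U\mid\mathbf{Y})=\mathbb{E}\!\left[\big(U-\mathbb{E}[U\mid\mathbf{Y}]\big)^2\right].
\end{equation*}
Taking $U=f'_\lambda(t)$ and $\mathbf{Y}=\mathbf{y}_n$, and substituting $\mathbb{E}[U\mid\mathbf{Y}]=\hat f'_{n,\lambda}(t,\mathbf{a}_n,f)$ and $\mathrm{Var}(U\mid\mathbf{Y})=v'_{n,\lambda}(t)$ from the previous paragraph, yields precisely the claimed equality. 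It is worth stressing that the expectation must be taken over the prior law of $f$ and the noise jointly; this is exactly what makes the right-hand side equal to the data-independent posterior variance $v'_{n,\lambda}(t)$, which carries no dependence on the particular realisation of $f$.

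For the monotonicity I would exploit that the first $n$ observations are contained in the first $n+1$, so that conditioning on $\mathbf{y}_{n+1}$ refines conditioning on $\mathbf{y}_n$. Applying the law of total variance with this nested conditioning gives
\begin{equation*}
v'_{n,\lambda}(t)=\mathrm{Var}\big(f'_\lambda(t)\mid\mathbf{y}_n\big)=\mathbb{E}\!\left[\mathrm{Var}\big(f'_\lambda(t)\mid\mathbf{y}_{n+1}\big)\,\middle|\,\mathbf{y}_n\right]+\mathrm{Var}\!\left(\mathbb{E}\big[f'_\lambda(t)\mid\mathbf{y}_{n+1}\big]\,\middle|\,\mathbf{y}_n\right).
\end{equation*}
Since $\mathrm{Var}(f'_\lambda(t)\mid\mathbf{y}_{n+1})=v'_{n+1,\lambda}(t)$ is a constant, the first term collapses to $v'_{n+1,\lambda}(t)$, while the second term is a variance and hence nonnegative; therefore $v'_{n,\lambda}(t)\geq v'_{n+1,\lambda}(t)$ for every $t\in I$.

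I expect the only genuinely delicate points to be bookkeeping rather than conceptual: verifying that differentiation commutes with Gaussian conditioning, so that the posterior mean of the derivative process is indeed $\hat f'_{n,\lambda}$ and its posterior variance is the stated $v'_{n,\lambda}$, and making explicit that the expectation in the lemma is over the Gaussian process prior together with the noise rather than over the noise alone. Without this reading the right-hand side of the first identity would retain an $f$-dependent bias term and the equality would fail, so this is the step I would argue most carefully. As a fallback for the monotonicity, a purely linear-algebraic argument via the Schur complement of the augmented Gram matrix $K(\mathbf{a}_{n+1},\mathbf{a}_{n+1})+\sigma^2 I$ also gives $v'_{n,\lambda}(t)-v'_{n+1,\lambda}(t)\geq 0$, but the probabilistic argument above is cleaner and avoids inverting bordered matrices.
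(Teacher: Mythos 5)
Your proof is correct. For the first identity you follow essentially the same route as the paper: both arguments reduce the unconditional mean squared error (over the GP prior and the noise jointly) to the posterior variance by exploiting that, for jointly Gaussian variables, the conditional variance is deterministic and independent of the realised observations; the paper phrases this via the tower property over the noise, citing Lemma 11 of Koepernik and Pfaff, and you correctly flag the same subtlety (that the expectation must run over the prior on $f$ as well as the noise). For the monotonicity, however, you take a genuinely different route. The paper argues by explicit linear algebra: it embeds $B_n = K(\mathbf{a}_n,\mathbf{a}_n)+\sigma^2 I_n$ into the bordered matrix $B_{n+1}$, inverts $B_{n+1}$ by the bordering method, and computes the decrement exactly,
\begin{equation*}
v'_{n,\lambda}(t)-v'_{n+1,\lambda}(t)
=\nu^{-1}\left(K^\lambda_x(t,\mathbf{a}_n)B_n^{-1}K(\mathbf{a}_n,a_{n+1})-k^\lambda_x(t,a_{n+1})\right)^2\geq 0,
\end{equation*}
where $\nu>0$ is the Schur complement of $B_n$ inside $B_{n+1}$. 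You instead apply the conditional law of total variance to the nested $\sigma$-algebras $\sigma(\mathbf{y}_n)\subseteq\sigma(\mathbf{y}_{n+1})$, collapse the inner term because the Gaussian posterior variance is constant, and conclude from nonnegativity of the remaining conditional variance. Both arguments are sound. Yours is shorter, requires no matrix manipulation, and transfers verbatim to the second-derivative variance $v''_{n,\lambda}$, whose monotonicity the paper invokes in the proof of Theorem \ref{thm:smoothing} with only the remark that the proof is ``analogous''; your argument makes that analogy literal. What the paper's computation buys in exchange is a closed-form expression for the per-observation variance reduction, which is quantitative information a law-of-total-variance argument cannot give --- though the paper ultimately uses only its sign, so nothing is lost by your approach in this context.
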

\begin{proof}
    The first statement follows from Lemma 11 of \cite{koepernik2021consistency}. In particular, 
    \begin{align*}
        \mathbb{E}_{f'_\lambda}\left[\left|\hat{f}'_{n,\lambda}(t)-f'_\lambda(t)\right|^2\right] &= \mathbb{E}_{\boldsymbol\zeta_n}\left[\mathbb{E}_{f'_\lambda}\left[\left|\hat{f}'_{n,\lambda}(t)-f'_\lambda(t)\right|^2\right]\Big\vert \mathbf{f}(\mathbf{a}_n)+\boldsymbol\zeta_n = \mathbf{y}_n\right]\\
        &=\mathbb{E}_{\boldsymbol\zeta_n}\left[\mathbb{E}_{f'_\lambda}\left[\left|f'_\lambda(t)-\mathbb{E}_{f'_\lambda}[f'_\lambda(t)\vert \mathbf{f}(\mathbf{a}_n)+\boldsymbol\zeta_n]\right|^2\right]\Big\vert \mathbf{f}(\mathbf{a}_n)+\boldsymbol\zeta_n = \mathbf{y}_n\right]\\
        &=\mathbb{E}_{\boldsymbol\zeta_n}[\mathrm{Var}(f'_\lambda(t)\vert \mathbf{f}(\mathbf{a}_n)+\boldsymbol\zeta_n)]\\
        &=\mathbb{E}_{\boldsymbol\zeta_n}[v'_{n,\lambda}(t)]= v'_{n,\lambda}(t).
    \end{align*}
    
    For the second statement, we can write 
    $$v'_{n,\lambda}(t)=k^\lambda_{xy}(t,t)-K^\lambda_x(t,\mathbf{a}_{n+1})
        \begin{bmatrix}
            B_n^{-1} & \mathbf{0}_{n\times 1} \\
            \mathbf{0}_{1\times n} & 0
        \end{bmatrix}
    K^\lambda_y(\mathbf{a}_{n+1},t),$$
    where $B_n:=(K(\mathbf{a}_{n},\mathbf{a}_{n})+\sigma^2I_n)$.
    Using the bordering method to obtain an expression for $B_{n+1}^{-1}$  in terms of $B_n$, we get
    $$v'_{n,\lambda}(t)-v'_{n+1,\lambda}(t)$$
    $$=\nu^{-1}K^\lambda_x(t,\mathbf{a}_{n+1})
        \begin{bmatrix}
            B_n^{-1}K(\mathbf{a}_n,a_{n+1})K(a_{n+1},\mathbf{a}_n)B_n^{-1} & -B_n^{-1}K(\mathbf{a}_n,a_{n+1}) \\
            -K(a_{n+1},\mathbf{a}_n)B_n^{-1} & 1
        \end{bmatrix}
    K^\lambda_y(\mathbf{a}_{n+1},t)$$
    $$=\nu^{-1}((K^\lambda_x(t,\mathbf{a}_n)B_n^{-1}K(\mathbf{a}_n,a_{n+1}))^2-2K^\lambda_x(t,a_{n+1})K^\lambda_x(t,\mathbf{a}_n)B_n^{-1}K(\mathbf{a}_n,a_{n+1})+k^\lambda_x(t,a_{n+1})^2)$$
    $$=\nu^{-1}(K^\lambda_x(t,\mathbf{a}_n)B_n^{-1}K(\mathbf{a}_n,a_{n+1})-k^\lambda_x(t,a_{n+1}))^2,$$
    where 
    $\nu:=k(a_{n+1},a_{n+1})+\sigma-K(a_{n+1},\mathbf{a}_n)B_n^{-1}K(\mathbf{a}_n,a_{n+1})=v_n(a_{n+1})+\sigma$
    is the Schur complement of $B_n$ inside $B_{n+1}$. As the $\nu$ is the Schur complement of a positive-definite matrix inside a positive-definite matrix, it is positive. As the second factor in the final line above is a square and thus positive too, we conclude that the sequence of functions $v'_{n,\lambda}(t)$ is monotonically decreasing.
\end{proof}

\begin{proof}[Proof of Theorem \ref{thm:smoothing}]

The first statement follows from Theorem 8 in \cite{koepernik2021consistency}.

To prove the remainder of the theorem, we recall from Equation (\ref{eq:joint_dist}) that the covariance matrix of the distribution of $\left({f}_{\lambda}(t), f'_{\lambda}(t)\right)^T$ given $n$ noisy observations of ${f}$ is
$$\begin{bmatrix}
k(t,t)-K(t,\mathbf{a}_n)B_n^{-1}K(\mathbf{a}_n,t) & k^\lambda_y(t,t)-K(t,\mathbf{a}_n)B_n^{-1}K^\lambda_y(\mathbf{a}_n,t) \\ k^\lambda_x(t,t)-K^\lambda_x(t,\mathbf{a}_n)B_n^{-1}K(\mathbf{a}_n,t) & k^\lambda_{xy}(t,t)- K^\lambda_x(t,\mathbf{a}_n)B_n^{-1}K^\lambda_y(\mathbf{a}_n,t)
\end{bmatrix}.$$
As this matrix needs to be positive-definite, by taking the determinant and using the symmetry of $k$ we get
\begin{equation}
(k(t,t)-K(t,\mathbf{a}_n)B_n^{-1}K(\mathbf{a}_n,t))(k^\lambda_{xy}(t,t)- K^\lambda_x(t,\mathbf{a}_n)B_n^{-1}K^\lambda_y(\mathbf{a}_n,t)) \label{eq:unif_conv_bound}
\end{equation}
\begin{equation}
\geq (k^\lambda_x(t,t)-K^\lambda_x(t,\mathbf{a}_n)B_n^{-1}K(\mathbf{a}_n,t))^2\geq 0.\label{eq:unif_conv}
\end{equation}
Thus, $k^\lambda_x(t,t)-K^\lambda_x(t,\mathbf{a}_n)B_n^{-1}K(\mathbf{a}_n,t)\to0$ uniformly on $I$ as the first factor of (\ref{eq:unif_conv_bound}) converges uniformly by Proposition 10 of \cite{koepernik2021consistency} and the second factor of (\ref{eq:unif_conv_bound}) is bounded by the monotonicity established in Lemma \ref{lem:mono} and the compactness of $I$. Repeating the same procedure with $\hat{f}''_{n,\lambda}$ in place of  $\hat{f}'_{n,\lambda}$ gives $k^\lambda_{xx}(t,t)-K^\lambda_{xx}(t,\mathbf{a}_n)B_n^{-1}K(\mathbf{a}_n,t)\to0$ uniformly: in this case, the second factor is $k^\lambda_{xxyy}(t,t)- K^\lambda_{xx}(t,\mathbf{a}_n)B_n^{-1}K^\lambda_{yy}(\mathbf{a}_n,t)$, which equals $v_{n,\lambda}''(t)$, the variance of the second derivative of the GP $f_\lambda$. We can show that $v_{n,\lambda}''(t)$ monotonically decreases by a proof analogous to the case $v_{n,\lambda}'(t)$ given in Lemma \ref{lem:mono}. For $v_{n,\lambda}''(t)$ to be well-defined we require $k$ to be four times differentiable.

Then, by Jensen's inequality and Lemma \ref{lem:mono}, we can bound the expected value of the squared difference $V\left(\hat{f}_{n,\lambda}\right)-V(f_\lambda)$:
$$\mathbb{E}\left[\left|\int_0^1\left|f'_\lambda(t)\right|-\left|\hat{f}'_{n,\lambda}(t, \mathbf{a}_n, f)\right|\,\mathrm{d}t\right|^2\right]\leq \mathbb{E}\left[\left(\int_0^1\left|\left|f'_\lambda(t)\right|-\left|\hat{f}'_{n,\lambda}(t, \mathbf{a}_n, f)\right|\right|\,\mathrm{d}t\right)^2\right]$$
$$\leq\mathbb{E}\left[\int_0^1\left|\left|f'_\lambda(t)\right|-\left|\hat{f}'_{n,\lambda}(t, \mathbf{a}_n, f)\right|\right|^2\,\mathrm{d}t\right]=\int_0^1\mathbb{E}\left[\left|\left|f'_\lambda(t)\right|-\left|\hat{f}'_{n,\lambda}(t, \mathbf{a}_n, f)\right|\right|^2\right]\,\mathrm{d}t=\int_0^1{v'_{n,\lambda}(t)}\,\mathrm{d}t$$
$$=\left[k^\lambda_x(t,t)-K^\lambda_x(t,\mathbf{a}_n)B_n^{-1}K(\mathbf{a}_n,t)-\int_0^tk^\lambda_{xx}(s,s)-K^\lambda_{xx}(s,\mathbf{a}_n)B_n^{-1}K(\mathbf{a}_n,s)\,\mathrm{d}s\right]_0^1.$$
The final equation above converges to $0$ as $n\to\infty$, as both the left-hand term and the function under in the integral of the right-hand term in the above difference converge uniformly to 0 by Equation (\ref{eq:unif_conv}) and its analogue for $v_{n,\lambda}''(t)$.
\end{proof}

It follows that the ECT of the interpolation of the Gaussian smoothing $\hat{f}_n$ of $f$, denoted $\ECT^{\mathbf{a}_m}_{\hat{f}_n}$, is a consistent estimator of the ECT of $X$:

\begin{theorem}\label{thm:combined}
    Let $Z^* = (Z, Z_0,\{\Phi_\lambda\}_{\lambda\in\Lambda})$ be a finite one-dimensional CW complex with some fixed cellular structure and $f:Z \to X \subseteq \mathbb{R}^d$ be a $C^2$ homeomorphism with bounded curvature. Further, assume that all components of $f$ are functions in the RKHS of $k$, where $k$ is a kernel satisfying the assumptions of Theorem \ref{thm:smoothing}. Moreover, assume that $\left\|f'_\lambda(t)\right\|_2=L_\lambda$ is constant on all 1-cells $\lambda\in\Lambda$. Let $\mathbf{a}$ be a sequence in $Z$ which is compatible with $Z^*$ and dense for $f$. Let
    $$f(t):=\left(f^1(t),...,f^d(t)\right)^T, \qquad\hat{f}_n:=\left(\hat{f}^1_n,...,\hat{f}^d_n\right)^T,$$
    where for $j=1,...,d$ and $i=1,...,n$ the function $\hat{f}^j_n$ is the Gaussian smoothing of $f^j$ given observations $y_{ij}=f^j(a_i)+\zeta_{ij}$ using kernel $k$ and $\zeta_{ij}\sim\mathcal{N}(0,\sigma_j)$ i.i.d for some $\sigma_j>0$. Then for each $\varepsilon>0$
    $$\lim_{n\to\infty}\mathbb{P}\left(\left\|\ECT_{\hat{f}_n}- \ECT_{\,f}\right\|<\varepsilon\right)\to 1.$$
\end{theorem}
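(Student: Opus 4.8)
The plan is to reduce the probabilistic statement to the deterministic stability estimate of Proposition \ref{prop:funcstab} applied to the pair $(f,g_n)$, where $g_n$ is a constant-speed reparametrization of the random smoothing $\hat f_n$. Since the Euler characteristic transform of a map is unchanged under precomposition with a homeomorphism of the domain, reparametrizing each $\hat f_n\circ\Phi_\lambda$ to constant velocity produces a $g_n\in\funcspacer$ with $\ECT_{g_n}=\ECT_{\hat f_n}$ while forcing hypothesis (2) of Proposition \ref{prop:funcstab} to hold. Crucially, the error bound there depends only on the fixed curvature bound $M$ and arc lengths $L_\lambda$ of the reference map $f$, so no control on the (random) curvature of $\hat f_n$ is ever required.

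First I would assemble the two modes of convergence supplied by Theorem \ref{thm:smoothing}, applied componentwise. Summing the componentwise sup-norm convergence gives $\mathbb{E}[\|\hat f_n-f\|_\infty]\to 0$, and summing the integrated variance quantity $\int_0^1 v'_{n,\lambda}(t)\,\mathrm{d}t\to 0$ from the proof of Theorem \ref{thm:smoothing} gives mean-square $L^2$ convergence of the derivative, namely $\mathbb{E}[\int_0^1\|(\hat f_n\circ\Phi_\lambda)'-(f\circ\Phi_\lambda)'\|_2^2\,\mathrm{d}t]\to 0$. Writing $\eta_{n,\lambda}:=\int_0^1\|(\hat f_n\circ\Phi_\lambda)'-(f\circ\Phi_\lambda)'\|_2\,\mathrm{d}t$, Jensen's inequality gives $\mathbb{E}[\eta_{n,\lambda}]\to 0$, hence $\eta_{n,\lambda}\to 0$ in probability. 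This immediately controls the arc-length discrepancy, since by the reverse triangle inequality $|\,\mathrm{arclen}(\hat f_n\circ\Phi_\lambda)-L_\lambda\,|\leq \eta_{n,\lambda}$.

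Next I would show the reparametrization stays uniformly close to $f$. Because $f\circ\Phi_\lambda$ already has constant speed $L_\lambda$, its normalized arc-length function is the identity; the normalized arc-length function $s_{n,\lambda}$ of $\hat f_n\circ\Phi_\lambda$ then satisfies $\|s_{n,\lambda}-\mathrm{id}\|_\infty\leq 2\eta_{n,\lambda}/L_\lambda$ for large $n$, and the same bound passes to the inverse reparametrization $\psi_{n,\lambda}=s_{n,\lambda}^{-1}$ via the elementary estimate $|\psi_{n,\lambda}(v)-v|=|u-s_{n,\lambda}(u)|$ at $u=\psi_{n,\lambda}(v)$. Since $f\circ\Phi_\lambda$ is $L_\lambda$-Lipschitz, $\|f-g_n\|_\infty\leq\max_\lambda L_\lambda\|\psi_{n,\lambda}-\mathrm{id}\|_\infty+\|\hat f_n-f\|_\infty\to 0$ in probability. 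Setting $\delta_n:=\max_\lambda\max\big(|\,\mathrm{arclen}(\hat f_n\circ\Phi_\lambda)-L_\lambda|,\ \|f-g_n\|_\infty\big)$, all three hypotheses of Proposition \ref{prop:funcstab} hold with $\epsilon=\delta_n$, yielding $\|\ECT_{\hat f_n}-\ECT_f\|=\|\ECT_{g_n}-\ECT_f\|\leq |Z_0|\delta_n+\sum_\lambda G_\lambda(\delta_n)=:\Psi(\delta_n)$, a deterministic function with $\Psi(\delta)\to 0$ as $\delta\to 0$ (exactly as in the proof of Theorem \ref{thm:ECTstab}). A standard argument then converts $\delta_n\to 0$ in probability into $\mathbb{P}(\|\ECT_{\hat f_n}-\ECT_f\|<\epsilon)\to 1$: given $\epsilon$, pick $\delta_0$ with $\Psi(\delta)<\epsilon$ for $\delta<\delta_0$, so $\mathbb{P}(\|\ECT_{\hat f_n}-\ECT_f\|\geq\epsilon)\leq\mathbb{P}(\delta_n\geq\delta_0)\to 0$.

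The main obstacle is the reparametrization step: converting the mean-square $L^2$ convergence of the derivative into uniform convergence $\psi_{n,\lambda}\to\mathrm{id}$ of the arc-length parametrizations, and handling the regularity of $g_n$. The latter needs $\hat f_n\circ\Phi_\lambda$ to be nowhere locally constant so that $g_n$ is a genuine constant-velocity, piecewise-$C^1$ curve; this is automatic when the RKHS functions are real-analytic, and in any event only piecewise-$C^1$ regularity of the second argument is used in the proof of Proposition \ref{prop:funcstab}. Care is also needed because every convergence above holds only in mean, so each deterministic inequality must be threaded through to a statement in probability before the final $\epsilon$-$\delta$ step.
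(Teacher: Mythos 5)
Your proposal is correct and follows essentially the same route as the paper's proof: reparametrise $\hat f_n$ cell-wise to constant velocity (using that the ECT is invariant under precomposition with a domain homeomorphism), control $\|s_{n,\lambda}-\mathrm{id}\|_\infty$ and $\|\hat f_n - f\|_\infty$ in probability via Theorem~\ref{thm:smoothing} and the integrated derivative-variance bound (the paper packages this as Lemma~\ref{lem:arc_length}), and then feed the resulting random $\epsilon$ into Proposition~\ref{prop:funcstab}, whose bound depends only on the curvature and arc lengths of the fixed $f$. Your explicit final $\epsilon$--$\delta$ conversion and your flagging of the degenerate case where $\hat f_{n,\lambda}$ is locally constant are slightly more careful than the paper's treatment, but they do not constitute a different argument.
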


Note that as $f$ is a homeomorphism, $\ECT_{f}=\ECT_{\,\im f}=\ECT_X$ and we thus have constructed a consistent estimator for $\ECT_X$. If for given observations $\mathbf{y}_n$ the curvature of $\hat{f}_n$ is bounded on each 1-cell, we can approximate $\ECT_{\hat{f}_n}$ by $\ECT_{\hat{f}_n}^{\mathbf{a}_m}$ arbitrarily closely for a sufficiently large $m$ by Theorem \ref{thm:interpolationstable}. We conjecture that for sufficiently well-behaved kernels $k$, $\ECT_{\hat{f}_n}^{\mathbf{a}_m}$ converges to $\ECT_{\hat{f}_n}$ in probability, where $m$ is some function in $n$. Proving this conjecture will involve bounding the curvature with high probability and is beyond the scope of this paper.

\begin{lemma}\label{lem:arc_length}
    Let $f$ and $\hat{f}_n$ be as in the statement of Theorem \ref{thm:combined}. Denote the arc-lengths of $\hat{f}_{\lambda,n}:=\hat{f}_n\circ\Phi_\lambda$ and $f_\lambda:=f\circ\Phi_\lambda$ by $L_{n,\lambda}$ and $L_\lambda$ respectively for each $\lambda\in\Lambda$. Then $L_{n,\lambda}\to L_\lambda$ and
    $$\int_0^1\left|\left\|f'_{n,\lambda}(t)\right\|_2-\left\|f'_{\lambda}(t)\right\|_2\right|\,\mathrm{d}t\to 0$$
    in probability.
\end{lemma}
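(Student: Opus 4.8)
The plan is to derive both claims from the coordinatewise mean-square convergence of derivatives that is already established inside the proof of Theorem~\ref{thm:smoothing}, applied separately to each component $f^j$. First I would note that the integral (second) statement is the stronger one. Writing $\hat{f}_{n,\lambda} := \hat{f}_n\circ\Phi_\lambda$ and $f_\lambda := f\circ\Phi_\lambda$, the arc lengths are $L_{n,\lambda} = \int_0^1 \|\hat{f}'_{n,\lambda}(t)\|_2\,\mathrm{d}t$ and $L_\lambda = \int_0^1 \|f'_\lambda(t)\|_2\,\mathrm{d}t$ (here $f'_{n,\lambda}$ in the statement denotes $\hat{f}'_{n,\lambda}$, the derivative of the smoothed vector curve), so the triangle inequality gives $|L_{n,\lambda}-L_\lambda| \le \int_0^1 \big|\,\|\hat{f}'_{n,\lambda}(t)\|_2 - \|f'_\lambda(t)\|_2\,\big|\,\mathrm{d}t$. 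Hence it suffices to prove the integral on the right tends to $0$ in probability.

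For that integral, the key pointwise estimate is the reverse triangle inequality for the Euclidean norm, $\big|\,\|\hat{f}'_{n,\lambda}(t)\|_2 - \|f'_\lambda(t)\|_2\,\big| \le \|\hat{f}'_{n,\lambda}(t) - f'_\lambda(t)\|_2 = \big(\sum_{j=1}^d |(\hat{f}^j_{n,\lambda})'(t)-(f^j_\lambda)'(t)|^2\big)^{1/2}$, which reduces the problem to the individual coordinates handled by Theorem~\ref{thm:smoothing}. I would then take expectations and move them inside: by Cauchy--Schwarz in $t$ (against the constant function $1$ on $[0,1]$) followed by Jensen's inequality (concavity of the square root), $\mathbb{E}\big[\int_0^1 \|\hat{f}'_{n,\lambda}(t)-f'_\lambda(t)\|_2\,\mathrm{d}t\big] \le \big(\sum_{j=1}^d \int_0^1 e_{n,\lambda,j}(t)\,\mathrm{d}t\big)^{1/2}$, where $e_{n,\lambda,j}(t) := \mathbb{E}\big[|(\hat{f}^j_{n,\lambda})'(t)-(f^j_\lambda)'(t)|^2\big]$. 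By Lemma~\ref{lem:mono} applied to the coordinate $f^j$, each $e_{n,\lambda,j}(t)$ equals the posterior derivative variance of the $j$-th component GP, which is deterministic (depending only on $k$, $\mathbf{a}_n$ and $\sigma_j$), and the chain of inequalities in the proof of Theorem~\ref{thm:smoothing} shows exactly that $\int_0^1 e_{n,\lambda,j}(t)\,\mathrm{d}t \to 0$ for each fixed $\lambda$ and each $j$. Summing over the finitely many coordinates, the bound above tends to $0$.

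Finally, since $\int_0^1 \big|\,\|\hat{f}'_{n,\lambda}(t)\|_2 - \|f'_\lambda(t)\|_2\,\big|\,\mathrm{d}t$ is a nonnegative random variable whose expectation tends to $0$, Markov's inequality yields convergence to $0$ in probability; the same bound then transfers to $|L_{n,\lambda}-L_\lambda|$, establishing both statements. The main subtlety — more a matter of invoking the right intermediate fact than a genuine obstacle — is that the \emph{stated} conclusion of Theorem~\ref{thm:smoothing} is only the mean-square convergence of the scalar variations $V(\hat{f}_{n,\lambda})$, which by itself does not control the vector-valued arc length because $\|\cdot\|_2$ of the derivative vector is not the sum of the coordinate variations. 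The argument instead relies on the \emph{intermediate} conclusion $\int_0^1 v'_{n,\lambda}(t)\,\mathrm{d}t \to 0$ (that is, $L^2([0,1])$-convergence of each coordinate derivative) that the proof of that theorem actually delivers. One should therefore also verify that each coordinate smoothing $\hat{f}^j_n$ genuinely satisfies the hypotheses of Theorem~\ref{thm:smoothing}, so that this per-coordinate estimate is available for every $j$.
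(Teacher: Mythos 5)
Your proof is correct, and while it runs on the same engine as the paper's, the key estimate is genuinely different. The shared skeleton: both arguments reduce $|L_{n,\lambda}-L_\lambda|$ to the integral statement by the triangle inequality, work coordinatewise, and ultimately rest on the fact---established inside the proof of Theorem~\ref{thm:smoothing} via Lemma~\ref{lem:mono}---that the integrated posterior derivative variance $\int_0^1 v'_{n,\lambda}(t)\,\mathrm{d}t$ of each coordinate tends to zero. Where you diverge is the pointwise bound: you use the reverse triangle inequality $\bigl|\,\|\hat f'_{n,\lambda}(t)\|_2-\|f'_\lambda(t)\|_2\,\bigr|\le\|\hat f'_{n,\lambda}(t)-f'_\lambda(t)\|_2$ and then bound the expectation of the integral directly (Cauchy--Schwarz in $t$, Jensen, Fubini) by $\bigl(\sum_{j=1}^d\int_0^1 \mathbb{E}\bigl[|(\hat f^j_{n,\lambda})'(t)-(f^j_\lambda)'(t)|^2\bigr]\,\mathrm{d}t\bigr)^{1/2}$, finishing with Markov's inequality. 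The paper instead applies $|\sqrt{x}-\sqrt{y}|\le\sqrt{|x-y|}$ to the squared norms, splits coordinatewise by subadditivity of the square root, factors each difference of squares as $\bigl||a_j|-|b_j|\bigr|\cdot(|a_j|+|b_j|)$, and uses Cauchy--Schwarz to obtain a product of two factors per coordinate: one converging to $2V(f^j_\lambda)$ (hence bounded in probability) and one vanishing in probability. Your route buys two things: it gives convergence \emph{in mean} of the integral (strictly stronger than the convergence in probability you then extract), and it dispenses with having to control any bounded factor. You are also right---and more careful than the paper here---that the \emph{stated} conclusion of Theorem~\ref{thm:smoothing} (mean-square convergence of the variations) does not suffice: the $L^1$ distance $\int_0^1\bigl||(\hat f^j_{n,\lambda})'|-|(f^j_\lambda)'|\bigr|\,\mathrm{d}t$ is not controlled by $|V(\hat f^j_{n,\lambda})-V(f^j_\lambda)|$, and what is really needed is the intermediate estimate $\mathbb{E}\bigl[\bigl(\int_0^1\bigl||f'_\lambda|-|\hat f'_{n,\lambda}|\bigr|\,\mathrm{d}t\bigr)^2\bigr]\le\int_0^1 v'_{n,\lambda}(t)\,\mathrm{d}t$ from that theorem's proof; the paper's own proof of this lemma invokes ``Theorem~\ref{thm:smoothing}'' for its vanishing factor when it is in fact relying on that same internal estimate.
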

\begin{proof}
    First, note that $\sqrt{x+y}\leq\sqrt{x}+\sqrt{y}$ and $|\sqrt{x}-\sqrt{y}| \leq \sqrt{|x-y|}$ for all $x,y\geq 0$. We have
    \begin{align*}
        \left|\int_0^1\left\|f'_{n,\lambda}(t)\right\|_2-\left\|f'_{\lambda}(t)\right\|_2\mathrm{d}t\right| &\leq \int_0^1\left|\left\|f'_{n,\lambda}(t)\right\|_2-\left\|f'_{\lambda}(t)\right\|_2\right|\,\mathrm{d}t\\
        &\leq \int_0^1\sqrt{\left|\big\|f'_{n,\lambda}(t)\big\|^2_2-\left\|f'_{\lambda}(t)\right\|^2_2\right|}\,\mathrm{d}t\\
        &\leq\sum_{j=1}^d\int_0^1\sqrt{\left||(f^j_{n,\lambda})'(t)|^2-|(f^j_{\lambda})'(t)|^2\right|}\,\mathrm{d}t\\
        &\leq\sum_{j=1}^d\left[\int_0^1|(f^j_{n,\lambda})'(t)|+|(f^j_{\lambda})'(t)|\mathrm{d}t\right]\!\!\left[\int_0^1\left||(f^j_{n,\lambda})'(t)\right|-\left|(f^j_{\lambda})'(t)|\right|\mathrm{d}t\right].
    \end{align*}
    The first inequality follows from the triangle inequality for integrals. The second and third inequalities follow from the inequalities for square roots introduced at the start of the proof. The final inequality is the Cauchy-Schwarz inequality for integrals.

    The first factor in the final line converges to $2V(f_\lambda)$ and the second factor converges to 0 for each $j=1,...,d$ by Theorem \ref{thm:smoothing} in probability.
\end{proof}

\begin{proof}[Proof of Theorem \ref{thm:combined}]
    We recall that convergence in mean implies convergence in probability.
    By applying Theorem \ref{thm:smoothing} to each component of $f$, we find that $\hat{f}_n$ converges to $f$ in mean in the $\infty$-norm.
    Note that $\hat{f}_{n,\lambda}$ need not be parameterised to constant velocity. Denote the arc length of $\hat{f}_{n,\lambda}$ by $L_{n,\lambda}$ and the arc length of  $f_{\lambda}$ by $L_{\lambda}$. By Lemma \ref{lem:arc_length}, $L_{n,\lambda}\to L_\lambda$ in probability for each $\lambda\in\Lambda$. Let $s_{n,\lambda}$ be the re-parametrisation of $\hat{f}_{n,\lambda}$ to constant-velocity on $I$, which is given by
    $$s_{n,\lambda}(t)=\frac{1}{L_{n,\lambda}}\int_0^t\left\|\hat{f}'_{n,\lambda}(x)\right\|_2\,\mathrm{d}x$$
    and satisfies $\left\|\left(\hat{f}_{n,\lambda}\circ s^{-1}\right)'(x)\right\|_2=1$ on $I$. Thus,
    \begin{equation*}
        |s_{n,\lambda}(t)-t|=\left|\int_0^t\left\|\hat{f}'_{n,\lambda}(x)\right\|_2/L_{n,\lambda}-1\,\mathrm{d}x\right|\leq\left|\int_0^t\left|\left\|\hat{f}'_{n,\lambda}(x)\right\|_2/L_{n,\lambda}-\left\|f'_\lambda(x)\right\|_2/L_{\lambda}\right|\mathrm{d}x\right|
    \end{equation*}
    \begin{equation}
        \leq\frac{1}{L_{n,\lambda}}\int_0^1\left|\left\|f'_{n,\lambda}(x)\right\|_2-\left\|f'_\lambda(x)\right\|_2\right|\,\mathrm{d}x+\left\|f'_\lambda(x)\right\|_2\left|\frac{1}{L_{\lambda,n}}-\frac{1}{L_\lambda}\right|.
        \label{eq:rp0}
    \end{equation}
    As both terms in Equation (\ref{eq:rp0}) converge to 0 in probability independently of $t$ by Lemma \ref{lem:arc_length}, we get $\|s_{n,\lambda}(t)-t\|_\infty\xrightarrow[]{p.}0$.

    We then define $s_n:Z\to Z$ as
    \begin{equation*}
        s_n(z) =
        \begin{cases}
            z & \text{if $z\in Z_0$},\\
            \left(\Phi_\lambda\circ s_{n,\lambda}^{-1}\circ\Phi_\lambda^{-1}\right)(z) & \text{if $z\in\Phi_\lambda((0,1))$.}
        \end{cases}
    \end{equation*}
    The map $s_n$ is continuous as each $s_{n,\lambda}^{-1}$ is continuous, $s_{n,\lambda}^{-1}(0)=0$ and $s_{n,\lambda}^{-1}(1)=1$.
    
    The result of the theorem then follows from Proposition \ref{prop:funcstab}: 
    \begin{equation}
        \left\|\ECT_{\hat{f}_{n}} - \ECT_{\,f}\right\|\leq \left\|\ECT_{\,\hat{f}_n}-\ECT_{\,\hat{f}_n\circ s_n}\right\| + \left\|\ECT_{\,\hat{f}_n\circ s_n}-\ECT_{f}\right\|. \label{eq:final_conv}
    \end{equation}
    Note that the first term is 0 as re-parametrisation does not change the image of a function. For the second term, we find that $\hat{f}_n\circ s_n$ converges to satisfy the conditions such that Proposition \ref{prop:funcstab} yields increasingly tight bounds: the arc lengths of $\hat{f}_{n}\circ s_n\circ \Phi_\lambda = \hat{f}_{n,\lambda}\circ s_{n,\lambda}^{-1}$  converge to those of $f\circ\Phi_\lambda = f_\lambda$ by Lemma \ref{lem:arc_length} (the composition of $f_\lambda$ with $s_{n,\lambda}^{-1}$ does not change its arc length). Further, both aforementioned functions have constant velocity and 
    $$\left\|\hat{f}_n\circ s_n-f\right\|_\infty\leq\left\|\hat{f}_n\circ s_n-\hat{f}_n\right\|_\infty+\left\|\hat{f}_n-f\right\|_\infty\xrightarrow[]{p.}0.$$
    In the above, the second term converges in probability by Theorem \ref{thm:smoothing}. The first term converges in probability as
    \begin{equation*}
        \Big\| \hat{f}_{n,\lambda}\circ s_{n,\lambda}^{-1} - \hat{f}_{n,\lambda}\Big\|_\infty \leq \Big\| \hat{f}_{n,\lambda}\circ s_{n,\lambda}^{-1} - f_\lambda\circ s_{n,\lambda}^{-1} \Big\|_\infty + \Big\| f_\lambda\circ s_{n,\lambda}^{-1} - f_\lambda\Big\|_\infty  + \Big\|f_\lambda-  \hat{f}_{n,\lambda}\Big\|_\infty\xrightarrow[]{p.}0
    \end{equation*}
    on each 1-cell $\lambda\in\Lambda$. The first term converges in probability by Theorem \ref{thm:smoothing} (as re-parametrisation does not change the $\infty$-norm). Note that $f_\lambda$ is continuous on $I$, which is compact, and therefore uniformly continuous. The second term equals $\Big\| f_\lambda\circ s_{n,\lambda} - f_\lambda\Big\|_\infty$ by pre-composition with $s_{n,\lambda}(t)$ and thus converges by Equation (\ref{eq:rp0}) and the uniform continuity of $f_\lambda$. The last term converges in probability by Theorem \ref{thm:smoothing}.
\end{proof}

Furthermore, our consistency result extends to the SECT of $X$.

\begin{lemma}
Define the $\ECT$ on some interval $[-a, a]$. Assume the distance between the $\ECT$s of two shapes $X$ and $Y$ is $\delta$. Then the distance between their $\SECT$s is at most $(2a+1)\delta$.
\end{lemma}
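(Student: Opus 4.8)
The plan is to reduce everything to a fixed direction and an elementary one-variable estimate. Since both of the relevant norms are a supremum over $v\in S^{d-1}$ of an $L^1$-integral in $t$, it suffices to fix $v$ and bound $\int_{-a}^a|\SECT_X(v,t)-\SECT_Y(v,t)|\,\mathrm{d}t$, and then take the supremum over $v$ at the end. I would write $g(t):=\ECT_X(v,t)-\ECT_Y(v,t)$; the hypothesis that the $\ECT$-distance equals $\delta$ gives $\int_{-a}^a|g(t)|\,\mathrm{d}t\le\delta$ for every $v$.

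Next I would expand the difference directly from the definition of the SECT. Since $\SECT_X(v,t)=\int_{-a}^t\ECT_X(v,x)\,\mathrm{d}x-(t+a)\,\overline{\mathrm{ECC}_{X,v}}$, subtracting the corresponding expression for $Y$ yields
$$\SECT_X(v,t)-\SECT_Y(v,t)=\int_{-a}^t\bigl(g(x)-\bar g\bigr)\,\mathrm{d}x,\qquad \bar g:=\frac{1}{2a}\int_{-a}^a g(x)\,\mathrm{d}x,$$
so that the SECT difference is exactly the running integral of the mean-centred function $g-\bar g$. The crux of the whole argument is that the means themselves are controlled by the $\ECT$-distance: $|\bar g|\le\frac{1}{2a}\int_{-a}^a|g|\le\frac{\delta}{2a}$, and hence $\int_{-a}^a|g-\bar g|\,\mathrm{d}x\le\int_{-a}^a|g|+2a|\bar g|\le 2\delta$.

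Finally I would bound the running integral. Writing $\Psi(t):=\int_{-a}^t(g-\bar g)$, observe that $\Psi(-a)=\Psi(a)=0$ since $g-\bar g$ has zero mean; the standard estimate for the running integral of a mean-zero function then gives $\sup_t|\Psi(t)|\le\tfrac12\int_{-a}^a|g-\bar g|\le\delta$. Integrating over an interval of length $2a$ yields $\int_{-a}^a|\SECT_X(v,t)-\SECT_Y(v,t)|\,\mathrm{d}t\le 2a\delta\le(2a+1)\delta$, and taking the supremum over $v$ completes the proof. The only genuinely non-routine point is the mean-subtraction term $\bar g$: it couples $X$ and $Y$, so one must bound its difference by the $\ECT$-distance rather than estimate the two means separately, after which everything reduces to the triangle inequality and elementary integral bounds. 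If one wants a sharper constant, a Tonelli argument that splits $\Psi(t)$ symmetrically into $\int_{-a}^t$ and $\int_t^a$ contributions weights $|g(x)|$ by $(a^2+x^2)/(2a)\le a$ and improves the bound to $a\delta$, which is still comfortably within $(2a+1)\delta$.
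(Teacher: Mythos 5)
Your proof is correct, and it takes a genuinely different route at the decisive step. The paper, like you, fixes $v$ and expands $\SECT_X(v,t)-\SECT_Y(v,t)=\int_{-a}^t g(x)\,\mathrm{d}x-\frac{t+a}{2a}\int_{-a}^a g(x)\,\mathrm{d}x$ with $g=\ECT_X(v,\cdot)-\ECT_Y(v,\cdot)$, but it then applies the triangle inequality to the two terms separately and integrates in $t$, making no use of cancellation. You instead observe that this difference is the running integral $\Psi$ of the mean-centred function $g-\bar g$, so that $\Psi(-a)=\Psi(a)=0$, and invoke $\sup_t|\Psi(t)|\le\tfrac12\int_{-a}^a|g-\bar g|\le\delta$; integrating over $[-a,a]$ then gives $2a\delta$. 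What this buys is a strictly sharper constant: $2a\delta$ (and $a\delta$ with your Tonelli refinement, whose weight $(a^2+x^2)/(2a)\le a$ I checked), which implies the stated $(2a+1)\delta$ for every $a$. This matters more than it may appear, because the paper's term-by-term route, carried out correctly, does not actually reach the stated constant: the mean-correction term contributes $\delta\int_{-a}^a\frac{t+a}{2a}\,\mathrm{d}t=a\delta$, not $\delta$ as the paper writes, so that argument yields $3a\delta$, which is weaker than $(2a+1)\delta$ whenever $a>1$. Your exploitation of the mean-zero structure is exactly what closes this gap; it is the ``genuinely non-routine point'' you flagged, and your argument establishes the lemma as stated, with room to spare.
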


\begin{proof}
Fix $v\in S^{d-1}$. Then
$$\|\SECT_X(v,\,\cdot\,)-\SECT_Y(v,\,\cdot\,)\|_1$$
$$=\int_{-a}^a \left|\int_{-a}^t\ECT_X(v,x)-\ECT_Y(v,x)\,\mathrm dx -\frac{t+a}{2a}\int_{-a}^a\ECT_X(v,x)-\ECT_Y(v,x)\,\mathrm dx\right|\,\mathrm{d}t$$
$$\leq\int_{-a}^a \int_{-a}^t\left|\ECT_X(v,x)-\ECT_Y(v,x)\right|\,\mathrm dx +\frac{t+a}{2a}\int_{-a}^a\left|\ECT_X(v,x)-\ECT_Y(v,x)\right|\,\mathrm dx\,\mathrm{d}t$$
$$\leq 2a\delta + \delta = (2a+1)\delta. $$
Since the above is independent of $v$, we are done.
\end{proof}

The main limitation of our results is that the topology of our embedded space is assumed to be known and the results only work for a restricted class of CW complexes. Extending our statistical estimator and the related results to perturbations in the topology of the underlying shape remains future work.

\section{Examples}\label{sec:example}

We now illustrate our methods by means of a simulated example. In our simulation, we focus on a single simple closed curve in $\RR^2$ and sample different numbers of noisy points from the curve. Our curve has been constructed by judiciously choosing complex Fourier coefficients. The samples are then taken by evenly spaced evaluations of our curve and are corrupted by adding independent multivariate Gaussian noise with mean 0 and covariance $(0.002)^2I_2$. The curve, together with the noisy samples, is visualised in Figure \ref{fig:example_curves}.

As a kernel in our Gaussian smoothing, we pick the \emph{sine-squared exponential kernel}. Assuming our curve is parameterised by $\gamma:[0,2\pi]\to\RR^2$ with $\gamma(0)=\gamma(2\pi)$, it is given by
$$k(s,t)=\exp\left(-2\sin\left(\frac{s-t}{2}\right)^2\right).$$
It satisfies the conditions of Theorems \ref{thm:smoothing} and \ref{thm:combined} (see Lemma \ref{thm:sse_bound}; it is infinitely differentiable as it is the composition of infinitely differentiable functions). Its RKHS contains the curve we generated (see Lemma \ref{thm:sse_rkhs}).

\begin{figure}
    \centering
    \includegraphics[width=\textwidth]{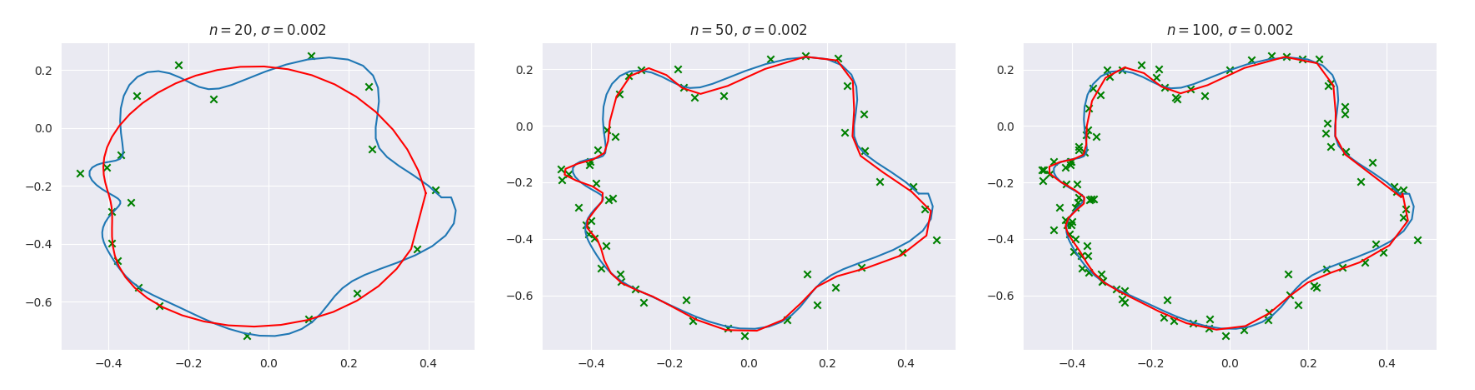}
    \caption{The Gaussian smoothings (red lines) of a simple closed curve (blue line) based on noisy samples (green crosses). The number of points is $20$ on the left panel, 50 in the middle panel and 100 in the right panel. All points have been independently corrupted with mean zero Gaussian noise with standard deviation $\sigma=0.002$ in each component.}
    \label{fig:example_curves}
\end{figure}

In Figure \ref{fig:ect_dist}, we visualise the SECT of our true curve (in a fixed direction) and compare it to the SECT of curves sampled from Gaussian process regression (GPR) posterior distributions based on 20, 50 and 100 noisy evaluations of our original curves, respectively. In addition, we plot the distributions of the distance (given by the norm introduced in Equation (\ref{eq:norm})) of the SECTs of the posterior samples with the SECT of the true curve.

In both types of plots, we see the posterior curves' mass moving closer to the true SECT, thereby illustrating the results of our theorems. We furthermore report that the distance between the SECT of the true curve and the SECTs of Gaussian smoothings are approximately 0.0627 ($n=20$), 0.0366 ($n=50$) and 0.0214 ($n=100$), respectively. However, while Figure \ref{fig:ect_dist} illustrates that our results provide a consistent estimator of the SECT, the estimator need not be unbiased.

\begin{figure}
    \centering
    \includegraphics[width=\textwidth]{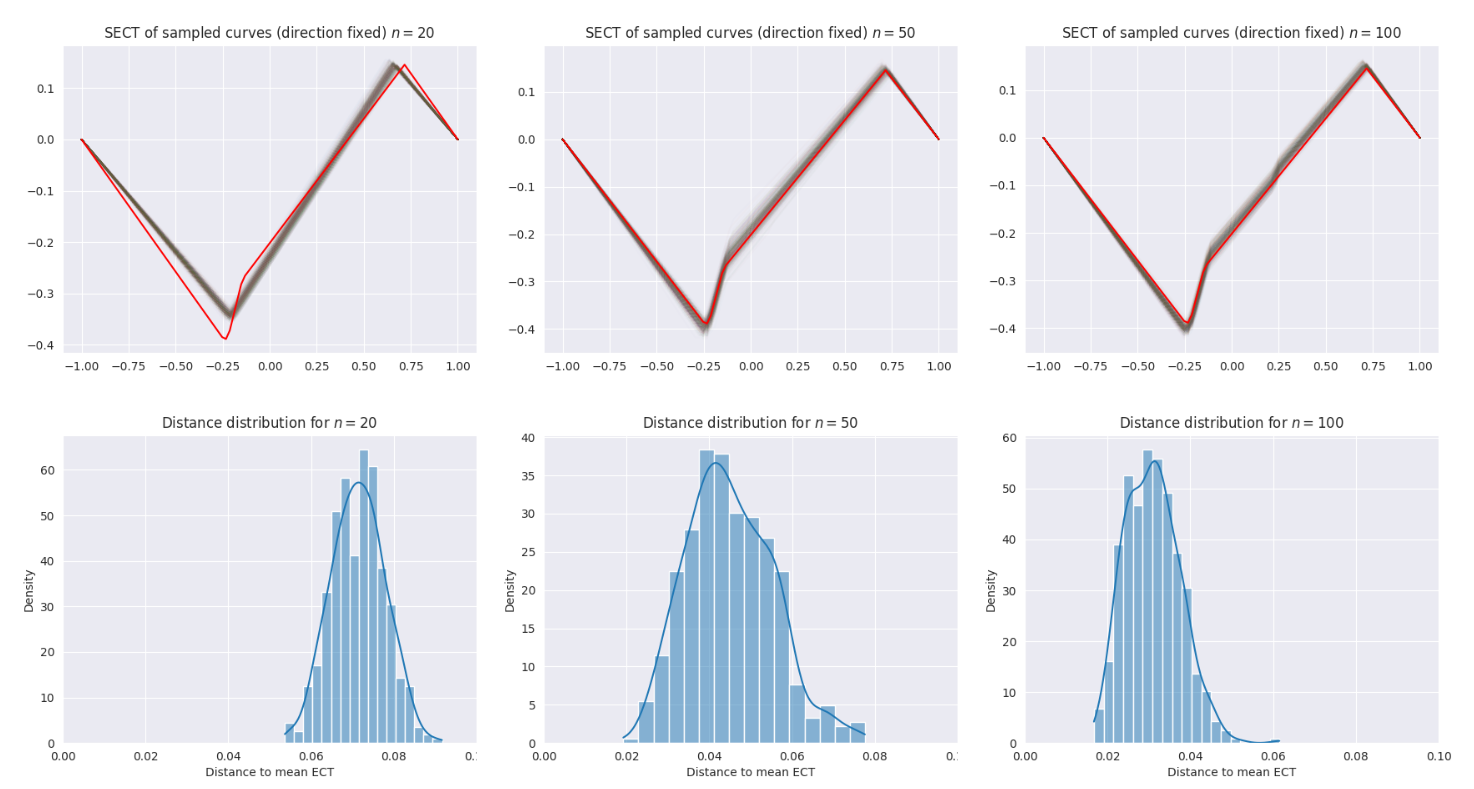}
    \caption{Top: The SECT in a fixed direction of the true shape (in red) compared to the SECTs of GPR posterior samples (in opaque blue) based on 20 (left), 50 (middle) and 100 (right) samples. The fixed direction corresponds to left-to-right in Figure \ref{fig:example_curves}. The SECTs are based on interpolations on the samples. Bottom: The distribution of the distance between the SECT of the true curve to SECTs of GPR posterior curves based on 20 (left), 50 (middle) and 100 (right) noisy samples from the underlying curve.}
    \label{fig:ect_dist}
\end{figure}

The example in this section illustrates how our estimator naturally gives rise to a posterior distribution over the space of SECT curves. We believe that there is potential to use this posterior distribution in a statistical inference or classification pipeline. Proving convergence rates for estimators like ours would help with quantifying the confidence of statistical ECT analyses.

\subsection{Characterisation of the sine-squared exponential kernel}

\begin{lemma}\label{thm:sse_bound}
For the sine-squared kernel, we have $J(S^1,d_k)<\infty$.
\end{lemma}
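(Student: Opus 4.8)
The plan is to reduce the finiteness of $J(S^1,d_k)$ to a covering-number estimate for the ordinary arc-length metric on the circle, by showing that $d_k$ is dominated by that metric. First I would compute $d_k$ explicitly. Since $k(t,t)=\exp(-2\sin^2 0)=1$ for every $t$, the induced metric simplifies to
\[
d_k(s,t)=\sqrt{2-2k(s,t)}=\sqrt{2\Big(1-\exp\big(-2\sin^2\tfrac{s-t}{2}\big)\Big)}.
\]

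The key step is a \emph{Lipschitz comparison} of $d_k$ with the geodesic (arc-length) distance $|s-t|_{S^1}$ on the circle. Using $1-e^{-y}\le y$ for $y\ge 0$ with $y=2\sin^2\tfrac{s-t}{2}$, followed by $|\sin u|\le |u|$, I would obtain
\[
d_k(s,t)^2\le 4\sin^2\tfrac{s-t}{2}\le |s-t|_{S^1}^2,
\]
so that $d_k(s,t)\le |s-t|_{S^1}$. Consequently every geodesic ball of radius $\varepsilon$ is contained in the corresponding $d_k$-ball of radius $\varepsilon$, so any $\varepsilon$-cover of $S^1$ in the arc-length metric is also a $d_k$ $\varepsilon$-cover. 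This gives $N(S^1,\varepsilon,d_k)\le N(S^1,\varepsilon,|\cdot|_{S^1})\le\lceil \pi/\varepsilon\rceil$, since the circle has total arc length $2\pi$ and each geodesic $\varepsilon$-ball covers an arc of length $2\varepsilon$.

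Next I would control the range of integration. Because $k(s,t)\ge e^{-2}>0$, the metric $d_k$ is bounded by $\sqrt{2(1-e^{-2})}<\sqrt 2$, so $N(S^1,\varepsilon,d_k)=1$ and the integrand vanishes for $\varepsilon\ge\sqrt2$. Feeding in the covering bound and using $\lceil y\rceil\le y+1$ together with $\sqrt2<\pi$, I get
\[
J(S^1,d_k)\le\int_0^{\sqrt2}\sqrt{\log\big(\pi/\varepsilon+1\big)}\,\mathrm d\varepsilon\le\int_0^{\sqrt2}\sqrt{\log(2\pi/\varepsilon)}\,\mathrm d\varepsilon.
\]
Finiteness then follows from the integrability of $\sqrt{\log(1/\varepsilon)}$ near $0$: the substitution $\varepsilon=2\pi e^{-u}$ turns the right-hand integral into $\int_{\log(2\pi/\sqrt2)}^{\infty}\sqrt u\cdot 2\pi e^{-u}\,\mathrm du$, which converges because $\sqrt u\,e^{-u}$ is integrable at infinity.

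The only genuinely substantive step is the Lipschitz comparison $d_k(s,t)\le|s-t|_{S^1}$; once it is in hand, the covering estimate and the convergence of the entropy integral are routine. I do not anticipate any serious obstacle, but some care is needed to record two bookkeeping points: that $d_k$ is bounded (so the integrand vanishes for large $\varepsilon$ and the entropy integral is effectively over a compact interval), and that $\log(2\pi/\varepsilon)$ stays strictly positive on all of $(0,\sqrt2]$ so that the square root in the bounding integral is well defined.
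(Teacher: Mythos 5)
Your proof is correct. It follows the same overall strategy as the paper---dominate $d_k$ by the arc-length metric, bound the covering numbers by $\lceil \pi/\varepsilon\rceil$, and check that the entropy integral converges---but your treatment of the key comparison step is genuinely more elementary. The paper establishes a two-sided strong equivalence $d \geq d_k \geq (2\sqrt{2-2e^{-2}}/\pi)\,d$ with the angular metric $d$ by a calculus argument: it computes the derivative of $f(x)=\sqrt{2-2\exp(-2\sin^2 x)}$, checks its sign on $[0,\pi/2]$ (using L'H\^{o}pital's rule at $0$), and invokes concavity of $f$ to extract the lower constant. You prove only the one-sided domination $d_k(s,t)\le |s-t|_{S^1}$, which is all the covering-number argument actually requires, and you do so with two elementary inequalities, $1-e^{-y}\le y$ and $|\sin u|\le |u|$, with no differentiation at all. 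The lower bound in the paper's equivalence is never used for this lemma (it would only matter if one wanted to bound $d_k$-covering numbers from below), so your streamlining loses nothing. The remaining bookkeeping differs only cosmetically: you truncate the entropy integral at $\sqrt 2$ via the $d_k$-diameter bound $k(s,t)\ge e^{-2}$ and substitute $\varepsilon=2\pi e^{-u}$, whereas the paper truncates at $\pi$ and substitutes $x=\pi/\varepsilon$ followed by $\log(x+1)\le x$; both computations are sound.
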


\begin{proof}
For the sine squared kernel $k$, the metric $d_k$ is given by 
$$d_k(s,t)=\sqrt{2-2\exp(-2\sin^2((s-t)/2)}.$$

It can be shown that $d_k$ is strongly equivalent to the angular metric $d$: let $$f(x)=\sqrt{2-2\exp(-2\sin^2(x))}.$$ Then 
$$f'(x)=\dfrac{4{e}^{-2\sin^2\left(x\right)}\cos\left(x\right)\sin\left(x\right)}{\sqrt{2-2\exp\left({-2\sin^2\left(x\right)}\right)}}.$$
In particular, $f'(x)\geq 0$ on $0\leq x\leq\pi/2$ (we can show that $\lim_{x\to0^+}f'(x)=2$ by L'Hopital's rule).
Further, $f$ is concave as it is the composition of non-decreasing concave functions. Thus, $d_k(s,t)=f(d(s,t)/2)$, we get $d\geq d_k\geq (2\sqrt{2-2e^{-2}}/\pi) d$ on $S^1$, where the first factor is $f'(0)/2$ and the second factor is the difference quotient of $f$ between 0 and $\pi/2$.

As $S^1$ is bounded and $d$ and $d_k$ are strongly equivalent, it is thus sufficient to show that $J(S^1,d)<\infty$. For $d$ and $\varepsilon>0$, we get $$N(S^1, d,\varepsilon)=\left\lceil\frac{\pi}{\varepsilon}\right\rceil\leq\frac{\pi}{\varepsilon}+1.$$
Hence,
\begin{align*}
    J(S^1, d) &= \int_0^\infty \sqrt{\log N(S^1, d,\varepsilon)}\,\mathrm{d}\varepsilon\\
    &\leq \int_0^\pi \sqrt{\log\left(\frac{\pi}{\varepsilon}+1\right)}\,\mathrm{d}\varepsilon\\
    &= \pi\int_1^\infty \frac{\sqrt{\log\left(x+1\right)}}{x^2}\,\mathrm{d}x\\
    &\leq \pi\int_1^\infty \frac{1}{x^\frac{3}{2}}\,\mathrm{d}x = \pi\left[-2x^{-\frac{1}{2}}\right]_1^\infty=2\pi<\infty.
\end{align*}
\end{proof}

\begin{lemma}\label{thm:sse_rkhs}
Define the Hilbert space $\mathcal{H}'$ of 
sequences $w_{ab}\in\RR$, $a,b\in\NN_0$, satisfying
$$\sum_{n=0}^\infty n!\sum_{\substack{a\geq0,\,b\geq0\\a+b=n}}\frac{w_{ab}^2}{C^n_a}<\infty,$$
where $C_a^n$ denotes $n$ choose $a$.
For $\{w_{ab}\}, \{v_{ab}\} \in\mathcal{H}'$, the inner-product of $\mathcal{H}'$ is given by
$$\langle \{w_{ab}\}, \{v_{ab}\} \rangle_\mathcal{H'}:=\gamma\sum_{n=0}^\infty n!\sum_{\substack{a\geq0,\,b\geq0\\a+b=n}}\frac{w_{ab}v_{ab}}{C^n_a}$$
where $\gamma>0$ is a constant. Define $V$ to be the closed subspace of sequences $\{w_{ab}\}\in\mathcal{H}'$ such that
\begin{equation}
    \sum_{(a,b)\in\NN^2}w_{ab}\cos^a(t)\sin^b(t)=0 \label{eq:subspacev}
\end{equation}
for all $t\in[0,2\pi)$. Then the Hilbert space $\mathcal{H}$ given by the functions
\begin{equation}
f(t)=\sum_{(a,b)\in\NN^2}w_{ab}\cos^a(t)\sin^b(t)\label{eq:hchar}
\end{equation}
with $\{w_{ab}\}\in V^\perp$ and inner-product induced from $\mathcal{H}'$ is isomorphic to the RKHS of the sine-squared-exponential kernel, denoted by $\mathcal{H}_k$.
\end{lemma}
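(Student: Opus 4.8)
The plan is to exhibit an explicit \emph{feature map} for the sine-squared-exponential kernel $k$ whose feature space is precisely $\mathcal{H}'$, and then to invoke the standard characterisation of an RKHS in terms of a feature map. First I would rewrite the kernel in a separable form. Using $2\sin^2\big(\tfrac{s-t}{2}\big)=1-\cos(s-t)$ and $\cos(s-t)=\cos s\cos t+\sin s\sin t$, one obtains
\begin{equation*}
k(s,t)=e^{-1}\exp\big(\cos s\cos t+\sin s\sin t\big).
\end{equation*}
Expanding the exponential as a power series and applying the binomial theorem to each term $(\cos s\cos t+\sin s\sin t)^n$ gives
\begin{equation*}
k(s,t)=e^{-1}\sum_{n=0}^\infty\frac{1}{n!}\sum_{a+b=n}C_a^n\,\cos^a s\sin^b s\,\cos^a t\sin^b t=e^{-1}\sum_{(a,b)\in\NN^2}\frac{\cos^a s\sin^b s\,\cos^a t\sin^b t}{a!\,b!},
\end{equation*}
since $C_a^n/n!=1/(a!\,b!)$ when $a+b=n$. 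This displays $k$ as a diagonal sum over the monomials $\phi_{ab}(t):=\cos^a t\sin^b t$.

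Next I would define the feature map $\Phi\colon[0,2\pi)\to\mathcal{H}'$ by $\Phi(t)_{ab}=\phi_{ab}(t)/(\gamma\,a!\,b!)$ and fix the constant $\gamma=e$. Two verifications are then required, both reducing to the expansions above. One checks that $\Phi(t)\in\mathcal{H}'$ for every $t$: the defining sum $\sum_n n!\sum_{a+b=n}\Phi(t)_{ab}^2/C_a^n=\sum_{a,b}a!\,b!\,\Phi(t)_{ab}^2$ collapses, using $\cos^2 t+\sin^2 t=1$, to a finite value. One then checks the reproducing identity $\langle\Phi(s),\Phi(t)\rangle_{\mathcal{H}'}=k(s,t)$, which is exactly the separable expansion above once the weights $\gamma\,a!\,b!$ in the $\mathcal{H}'$ inner product are matched against $1/(a!\,b!)$ in $k$; the value $\gamma=e$ is forced precisely by this matching. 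With these in hand, the synthesis functional $w\mapsto f_w$, $f_w(t):=\langle w,\Phi(t)\rangle_{\mathcal{H}'}$, simplifies to $f_w(t)=\sum_{(a,b)\in\NN^2} w_{ab}\,\cos^a t\sin^b t$, i.e.\ exactly the functions of the form (\ref{eq:hchar}).

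I would then invoke the general fact that for any feature map $\Phi$ into a Hilbert space $W$ with $k(s,t)=\langle\Phi(s),\Phi(t)\rangle_W$, the RKHS of $k$ equals $\{f_w:w\in W\}$ with norm $\|f\|_{\mathcal{H}_k}=\min\{\|w\|_W:f_w=f\}$, the minimiser being attained on the orthogonal complement of $\ker(w\mapsto f_w)$; consequently $\mathcal{H}_k$ is isometrically isomorphic to that orthogonal complement. Here $W=\mathcal{H}'$ and, by the simplification above, $\ker(w\mapsto f_w)$ is exactly the subspace $V$ of (\ref{eq:subspacev}). Since $V$ is closed, $\mathcal{H}'=V\oplus V^\perp$, and the functions realised with minimum-norm coefficients are precisely those of the form (\ref{eq:hchar}) with $\{w_{ab}\}\in V^\perp$, carrying the inner product induced from $\mathcal{H}'$. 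This is exactly $\mathcal{H}$, yielding the isometric isomorphism $\mathcal{H}\cong\mathcal{H}_k$.

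The main obstacle will be the bookkeeping in the final step: one must argue that $V$ is genuinely the \emph{full} kernel of the synthesis map, that the minimum-norm representative of each function lies in $V^\perp$, and that the inner product $\mathcal{H}$ inherits from $\mathcal{H}'$ agrees with the transported $\mathcal{H}_k$ norm rather than merely being proportional to it. It is here that the precise value $\gamma=e$ is essential, since any other positive constant would rescale the two norms relative to one another. The analytic interchanges (convergence of the double series, validity of the termwise manipulations, and well-definedness of $f_w$ via Cauchy--Schwarz against $\Phi(t)\in\mathcal{H}'$) are routine once $\Phi(t)\in\mathcal{H}'$ has been established.
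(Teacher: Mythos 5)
Your proposal is correct, but it takes a genuinely different route from the paper's. You construct an explicit feature map $\Phi(t)_{ab}=\cos^a t\,\sin^b t/(\gamma\,a!\,b!)$ into $\mathcal{H}'$, verify $\Phi(t)\in\mathcal{H}'$ and $k(s,t)=\langle\Phi(s),\Phi(t)\rangle_{\mathcal{H}'}$, and then invoke the standard feature-space characterisation of an RKHS: $\mathcal{H}_k$ is isometrically isomorphic to the orthogonal complement of the kernel of the synthesis map $w\mapsto\langle w,\Phi(\,\cdot\,)\rangle_{\mathcal{H}'}$, which you correctly identify with $V$ since the synthesis map simplifies to $w\mapsto\sum_{a,b}w_{ab}\cos^a t\,\sin^b t$. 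The paper argues directly instead: it shows $k(\,\cdot\,,t)\in\mathcal{H}$ with coefficients $w_{ab}=\frac{C_a^{a+b}}{(a+b)!}\cos^a(t)\sin^b(t)$ lying in $V^\perp$, verifies the reproducing property $\langle k(\,\cdot\,,t),f\rangle_{\mathcal{H}}=f(t)$ via the same weight-matching computation ($n!/C^n_a=a!\,b!$) that underlies your separable expansion, and then runs the uniqueness-of-RKHS argument inline: since $\mathcal{H}_k\subseteq\mathcal{H}$ and $\mathcal{H}$ is complete, one writes $\mathcal{H}=\mathcal{H}_k\oplus W$, and any $f\in W$ is orthogonal to every $k(\,\cdot\,,t)$, hence vanishes pointwise, so $W=0$. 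Your version is more modular, delivers the isometry and the minimum-norm quotient structure from a citable general theorem, and resolves a point the paper leaves loose: the paper silently replaces the sine-squared-exponential kernel by the proportional kernel $\exp(\cos s\cos t+\sin s\sin t)$ (effectively taking $\gamma=1$), whereas you keep the true kernel $e^{-1}\exp(\cos s\cos t+\sin s\sin t)$ and observe that exact isometry forces $\gamma=e$; under the weaker reading of ``isomorphic'' (same set of functions, proportional norms) any $\gamma>0$ suffices, which is all the later applications (membership of the curve's components in $\mathcal{H}_k$) actually require. What the paper's route buys in exchange is self-containedness, since it never needs to appeal to the feature-map theorem.
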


\begin{proof}
By using standard trigonometric identities, we see that the sine-squared kernel is proportional (by a positive constant) to the kernel
$$k(s, t)=\exp\left(\cos(s)\cos(t)+\sin(s)\sin(t)\right).$$
Thus, by using the Taylor expansion of $\exp$, $k(\,\cdot\,,t)\in\mathcal{H}$ for all $t$ with coefficients
$$w_{ab}=\frac{C^{a+b}_a}{(a+b)!}\cos^a(t)\sin^b(t).$$
Moreover, for $f\in\mathcal{H}$ with coefficients $v_{ab}$ and fixed $t$, we get
\begin{align}
    \langle k(\,\cdot\,,t), f\rangle_\mathcal{H} &= \sum_{n=0}^\infty n!\sum_{\substack{a\geq0,\,b\geq0\\a+b=n}}\frac{C^n_a\cos^a(t)\sin^b(t)v_{ab}}{n!C^n_a}\notag \\
    &= \sum_{(a,b)\in\mathbb{N}^2}\cos^a(t)\sin^b(t)v_{ab} = f(t).\label{eq:vperp}
\end{align}
Thus, the inner-product $\langle\,\cdot\,,\,\cdot\,\rangle_\mathcal{H}$ has the reproducing property and coincides with the inner-product induced by the kernel $k$ (i.e. the inner-product of $\mathcal{H}_k$) given in Equation (\ref{eq:rep_prop}). Further, the coefficients of $k(\,\cdot\,,t)$ lie in $V^\perp$: let $\{v_{ab}\}\in V$ and let $\{w_{ab}\}$ be the coefficients of $k(\,\cdot\,,\,t)$. Then by Equation (\ref{eq:vperp}), $\langle\{v_{ab}\},\{w_{ab}\}\rangle_\mathcal{H'}=0$. As $V^\perp$ is closed as it is perpendicular to $V$, so is $\mathcal{H}$, implying that $\mathcal{H}$ is a Hilbert space. We have that $\mathcal{H}_k\subseteq\mathcal{H}$. As $\mathcal{H}_k$ is complete by definition, we get $\mathcal{H}=\mathcal{H}_k\oplus W$ for some closed subspace $W$. Let $f\in W$. Then $\langle g, f\rangle_\mathcal{H}=0$ for all $g\in\mathcal{H}_k$ and in particular $f(t)=\langle k(\,\cdot\,, t)\rangle_\mathcal{H}=0$ for all $t\in[0,2\pi)$.
Thus, $W=0$ and  $\mathcal{H}\cong\mathcal{H}_k$.
\end{proof}

\begin{lemma}\label{thm:fourier}
    Every $f\in\mathcal{H}$ is continuous and the inclusion $\mathcal{H}\hookrightarrow C(S^1,d_\infty)$ is continuous, where $C(S^1,d_\infty)$ is the space of continuous real-valued functions on $S^1$ endowed with the $\infty$-norm. Further, $\cos(nt)$ and $\sin(nt)$ are elements of $\mathcal{H}$ for all $n\in\NN$.
\end{lemma}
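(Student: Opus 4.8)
The plan is to treat the two assertions separately. The continuity statements are soft consequences of the reproducing property together with the boundedness and continuity of the sine-squared kernel, while the membership of $\cos(nt)$ and $\sin(nt)$ is an elementary algebraic fact once one observes that these are genuine trigonometric polynomials in $\cos t$ and $\sin t$.

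For the continuity of the inclusion, I would invoke the reproducing property verified in Lemma \ref{thm:sse_rkhs} (Equation (\ref{eq:vperp})): for every $f\in\mathcal{H}$ and $t\in S^1$ we have $f(t)=\langle f, k(\,\cdot\,,t)\rangle_{\mathcal{H}}$, so Cauchy--Schwarz gives $|f(t)|\leq\|f\|_{\mathcal{H}}\sqrt{k(t,t)}$. Since $k(t,t)$ is a positive constant independent of $t$, taking the supremum over $t$ yields $\|f\|_\infty\leq C\|f\|_{\mathcal{H}}$, which is exactly the boundedness, hence continuity, of the linear inclusion $\mathcal{H}\hookrightarrow C(S^1,d_\infty)$. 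The continuity of a fixed $f$ follows from the same property: $|f(s)-f(t)|=|\langle f, k(\,\cdot\,,s)-k(\,\cdot\,,t)\rangle_{\mathcal{H}}|\leq\|f\|_{\mathcal{H}}\,d_k(s,t)$, and by Lemma \ref{thm:sse_bound} the metric $d_k$ is strongly equivalent to the angular metric, so $d_k(s,t)\to0$ as $s\to t$ and $f$ is continuous.

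For the membership claim, I would first make explicit the identification behind Lemma \ref{thm:sse_rkhs}: the closed subspace $V$ consists precisely of the coefficient sequences in $\mathcal{H}'$ that represent the zero function. Consequently, any function admitting \emph{some} representation $\sum_{a,b}w_{ab}\cos^a t\sin^b t$ with $\{w_{ab}\}\in\mathcal{H}'$ already lies in $\mathcal{H}$, since replacing $\{w_{ab}\}$ by its orthogonal projection onto $V^\perp$ leaves the represented function unchanged. In particular each monomial $\cos^a t\sin^b t$ has a finitely supported (hence $\mathcal{H}'$-summable) coefficient sequence and therefore belongs to $\mathcal{H}$; as $\mathcal{H}$ is a vector space, so does every finite linear combination of such monomials. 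It then suffices to note that $\cos(nt)$ and $\sin(nt)$ are finite combinations of this form: by de Moivre's formula (equivalently, the Chebyshev polynomials) $\cos(nt)=T_n(\cos t)$ is a degree-$n$ polynomial in $\cos t$ and $\sin(nt)=\sin t\,U_{n-1}(\cos t)$ is $\sin t$ times a degree-$(n-1)$ polynomial in $\cos t$, so both are finite sums of monomials $\cos^a t\sin^b t$.

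The main obstacle is the bookkeeping in the last paragraph: because the map from coefficient sequences to functions is not injective, one cannot simply read off a $V^\perp$-representative and must instead argue that possessing any $\mathcal{H}'$-representation is enough to guarantee membership in $\mathcal{H}$. Everything else reduces to standard RKHS estimates and elementary trigonometry.
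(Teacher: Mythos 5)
Your proposal is correct, and its overall architecture matches the paper's: the sup-norm bound $\|f\|_\infty\leq C\|f\|_{\mathcal{H}}$ via the reproducing property (Equation (\ref{eq:vperp})) and Cauchy--Schwarz, and membership of $\cos(nt)$, $\sin(nt)$ via a finite expansion in monomials $\cos^a t\sin^b t$ followed by projection of the (finitely supported) coefficient sequence onto $V^\perp$, which leaves the represented function unchanged because elements of $V$ represent the zero function. The one place you genuinely diverge is the pointwise continuity of a fixed $f\in\mathcal{H}$: the paper argues that $f$ is a uniform limit of its continuous partial sums, since $\mathcal{H}$-norm convergence dominates $\infty$-norm convergence, whereas you prove the Lipschitz estimate $|f(s)-f(t)|\leq\|f\|_{\mathcal{H}}\,d_k(s,t)$ and then invoke the comparison of $d_k$ with the angular metric. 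Your route is arguably cleaner: it yields a quantitative modulus of continuity and sidesteps a small subtlety in the paper's argument, namely that truncating the coefficient sequence of a general $f$ need not produce sequences in $V^\perp$, so the claim that the partial sums converge to $f$ in $\mathcal{H}$-norm requires an extra (routine) projection step. The trade-off is that you lean on the metric comparison $d_k\geq c\,d$, which appears inside the proof of Lemma \ref{thm:sse_bound} rather than in its statement; for your purposes the weaker and more immediate fact $d_k(s,t)\leq d(s,t)$ (or simply continuity of $k$, so that $d_k(s,t)\to 0$ as $s\to t$) suffices, and citing that directly would make the step self-contained. Your de Moivre/Chebyshev expansion is the same identity the paper writes out via binomial sums, so that step is only notationally different.
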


\begin{proof}
    Note that $\|k(\,\cdot\,,t)\|_\mathcal{H}=1$ for all $t$. Thus, by the reproducing property of $k$ and the Cauchy-Schwarz inequality, for all $f\in\mathcal{H}$ and any $t\in S^1$ we get
    $$|f(t)|=|\langle k(\,\cdot\,,t),f\rangle_\mathcal{H}|\leq\|f\|_\mathcal{H}.$$
    Hence, convergence in the $\mathcal{H}$-norm implies convergence in the $\infty$-norm. As $f$ can be written as a series of continuous functions converging in the $\mathcal{H}$-norm (c.f. Equation (\ref{eq:hchar})), it follows that $f$ is continuous. As for any $f\in\mathcal{H}$ with $\|f\|_\mathcal{H}\leq \epsilon$ and any $t\in S^1$ we have $|f(t)|\leq \epsilon$, we get that the inclusion $\mathcal{H}\hookrightarrow C(S^1,d_\infty)$ is continuous.

    Further, we can expand
    $$
    \cos(nt) = \sum_{k\text{ even}}^n (-1)^\frac{k}{2} \binom{n}{k}\cos^{n-k} (t) \sin^k (t),\qquad
    \sin(nt) = \sum_{k\text{ odd}}^n (-1)^\frac{k-1}{2} \binom{n}{k}\cos^{n-k} (t) \sin^k (t).$$
    Thus, $\cos(nt)$ and $\sin(nt)$ can be expanded as powers of $\cos$ and $\sin$ with coefficients in $\{w_{ab}\}\in\mathcal{H}'$ (as in Lemma \ref{thm:sse_rkhs}). We can project these coefficients into $V^\perp$ without changing the value of our series at any $t\in S^1$: the difference in the series we observe by subtracting from elements of $V$ from $\{w_{ab}\}$ is 0 for all $t$ (c.f. Equation (\ref{eq:subspacev})). Thus, $\cos(nt), \sin(nt) \in\mathcal{H}$ for all $n\in\NN$.
\end{proof}

\section*{Acknowledgements}

The authors thank Heather A Harrington and Vidit Nanda for fruitful discussions and helpful comments on this manuscript. Both authors are members of the Centre for Topological Data Analysis, which is funded by the EPSRC grant `New Approaches to Data Science: Application Driven Topological Data Analysis' \href{https://gow.epsrc.ukri.org/NGBOViewGrant.aspx?GrantRef=EP/R018472/1}{\texttt{EP/R018472/1}}. LM gratefully acknowledges support from the EPSRC Grant EP/R513295/1 and support from the Ludwig Institute for Cancer Research. For the purpose of Open Access, the authors have applied a CC BY public copyright licence to any Author Accepted Manuscript (AAM) version arising from this submission.

\section*{Data and Code Availability}

Data and code will be shared upon reasonable request.

\printbibliography

\end{document}